\setlist[itemize]{noitemsep, topsep=1pt, leftmargin=20pt}
\newcommand\bcdot{\ensuremath{
		\mathchoice
		{\mskip\thinmuskip\lower0.2ex\hbox{\scalebox{1.6}{$\cdot$}}\mskip\thinmuskip}}
	{\mskip\thinmuskip\lower0.2ex\hbox{\scalebox{1.6}{$\cdot$}}\mskip\thinmuskip}
	{\lower0.3ex\hbox{\scalebox{1.2}{$\cdot$}}}
	{\lower0.3ex\hbox{\scalebox{1.2}{$\cdot$}}}
}
\theoremstyle{plain}
\newtheorem{theo}{Theorem}[section]
\newtheorem{prop}[theo]{Proposition}
\theoremstyle{definition}
\newtheorem{rem}[theo]{Remark}
\newtheorem{definition}[theo]{Definition}
\theoremstyle{plain}
\newtheorem{lemma}[theo]{Lemma}
\newtheorem{theorem}[theo]{Theorem}
\newtheorem{proposition}[theo]{Proposition}
\theoremstyle{definition}
\newtheorem{remark}[theo]{Remark}
\theoremstyle{plain}
\newtheorem{thmint}{Theorem}
\theoremstyle{definition}
\newtheorem*{definition*}{Definition}
\DeclareSymbolFontAlphabet{\mathbb}{AMSb}
\DeclareSymbolFontAlphabet{\mathbbl}{bbold}
\DeclareMathOperator\Lie{Lie}
\DeclareMathOperator\sgn{sgn}
\title[]{Pluriclosed manifolds with parallel Bismut torsion}
\author{Giuseppe Barbaro}
\address[Giuseppe Barbaro]{Department of Mathematics, Aarhus University, Ny Munkegade 118, 8000 Aarhus C, Denmark}
\email{g.barbaro@math.au.dk}
\author{Francesco Pediconi}
\address[Francesco Pediconi]{Dipartimento di Scienze Matematiche ``Giuseppe Luigi Lagrange'' \\ Politecnico di Torino, corso Duca degli Abruzzi 24, 10129 Torino, Italy}
\email{francesco.pediconi@polito.it}
\author{Nicoletta Tardini}
\address[Nicoletta Tardini]{Dipartimento di Scienze Matematiche, Fisiche e Informatiche \\ Universit\`a degli Studi di Parma, Parco Area delle Scienze 53/a, 43124 Parma, Italy}
\email{nicoletta.tardini@unipr.it}
\subjclass[2020]{53C55, 53C25, 53B35, 53C05}
\keywords{Bismut connection; pluriclosed; parallel torsion; Bismut K\"ahler-like; Calabi--Yau with torsion.}
\thanks{All authors are members of GNSAGA of INdAM. The first-named author has been supported by Villum Young Investigator 0019098. The second-named and third-named authors have been supported by the PRIN 2022 project ``Real and Complex Manifolds: Geometry and Holomorphic Dynamics'' (code 2022AP8HZ9). The third-named author was supported by University of Parma through the action Bando di Ateneo 2023 per la ricerca.}
\begin{document}

\begin{abstract}
We present a complete classification of simply-connected pluriclosed manifolds with parallel Bismut torsion, extending previously known results in the literature. Consequently, we also establish a splitting theorem for compact manifolds that are both pluriclosed with parallel Bismut torsion and Calabi--Yau with torsion.
\end{abstract}

\maketitle

\section{Introduction}
\setcounter{equation} 0

Given a complex manifold $(M,J)$, there always exists a Hermitian metric on it, {\it i.e.}, a Riemannian metric $g$ such that $g(J\cdot , J\cdot ) = g(\cdot , \cdot )$. The metric $g$ is K\"ahler if the associated fundamental $2$-form $\omega(\cdot,\cdot)=g(J\cdot,\cdot)$ is closed and, consequently, K\"ahler geometry lies in the intersection of complex, Riemannian, and symplectic geometry. Therefore, the existence of a K\"ahler metric imposes strong restrictions on the topology and on the geometry of $(M,J)$ and so, from a complex-geometric point of view, it is relevant to study Hermitian non-K\"ahler manifolds. \smallskip

In the K\"ahler case the Levi-Civita connection $D$ preserves the complex structure, {\it i.e.}, $DJ=0$, and so it encodes both the Riemannian and the complex geometry of the manifold. Therefore, to study non-K\"ahler manifolds, other natural connections with non-vanishing torsion that preserve both the Riemannian metric $g$ and the complex structure $J$ have been introduced. These are called {\it Hermitian}, and, among them, a distinguished $1$-parameter family of {\it canonical} connections has been presented by Gauduchon \cite{MR1456265} by requiring a condition on the torsion. Among these, the {\it Bismut connection} $\nabla$ (also known as {\it Strominger connection}, or {\it K\"ahler with torsion connection}) is the unique Hermitian connection whose torsion $T$ is {\it totally skew-symmetric}, namely, $g(T(\cdot,\cdot),\cdot)$ is a $3$-form. Indeed, $T$ verifies
$$
g(T(\,\cdot\,,\,\cdot\,),\,\cdot\,) = {\rm d}\omega(J\,\cdot\,, J\,\cdot\,, J\,\cdot\,) = J^{-1}{\rm d}\omega(\,\cdot\,, \,\cdot\,, \,\cdot\,) \,\, .
$$
The Bismut connection is important due to its wide range of applications. For instance, it appears in non-K\"ahler index theory \cite{MR1006380, MR3099098, MR4696610} and in the context of geometrization of complex surfaces through the pluriclosed flow \cite{MR4181011, MR2673720, MR3110582}. Additionally, it plays a role in string theory \cite{MR800347, GATES1984157, MR872720, MR851702}. \smallskip

We remark that metric connections with totally skew-symmetric torsion have appeared implicitly in the last century before the introduction of the Bismut connection (see {\it e.g.\ }\cite{MR107275,36,37}). Moreover, they have been extensively used as a tool in the study of manifolds with non-integrable $\mathsf{G}$-structures (see {\it e.g.\ }\cite{MR1928632, MR1958088, MR2047649, MR2038309, MR2067465, MR2114426, MR2265474, MR2322400, ivanov2023riemannian}). Among metric connections with totally skew-symmetric torsion, two distinguished classes play an important role: those with {\it closed torsion} and those with {\it parallel torsion}. In the context of complex geometry, the Hermitian metrics whose Bismut connection has closed torsion, namely ${\rm d}J^{-1}{\rm d}\omega=0$, are called {\it pluriclosed} \cite{MR1006380} (also known as {\it strong K\"ahler with torsion}). In this paper, we give a complete classification of simply-connected pluriclosed manifolds with parallel Bismut torsion.

\begin{thmint} \label{thm:MAIN-A}
Let $(M^{2n}, J, g)$ be a complete, simply-connected Hermitian manifold. Then, $(M, J, g)$ is pluriclosed with parallel Bismut torsion if and only if it decomposes as a product of Hermitian irreducible factors, each of them is either K\"ahler or a Riemannian product
\begin{equation} \label{eq:usualproduct} \tag{$\star$}
\mathbb{R}^{\ell} \times \prod_{i=1}^{s} S_{i} \times \mathsf{K}
\end{equation}
endowed with a standard complex structure, where $\mathbb{R}^{\ell}$ is the $\ell$-dimensional flat Euclidean space, each $S_{i}$ is a Sasaki $3$-dimensional manifold, $\mathsf{K}$ is a compact semisimple Lie group of rank $r$ with a bi-invariant metric and $\ell \leq s+r$.
\end{thmint}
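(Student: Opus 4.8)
Throughout I write $\nabla$ for the Bismut connection and $H:=g(T(\,\cdot\,,\,\cdot\,),\,\cdot\,)=J^{-1}\mathrm{d}\omega$ for its torsion $3$-form, so that the hypothesis reads $\nabla H=0$ (parallel torsion) together with $\mathrm{d}H=0$ (pluriclosed). The ``if'' direction is the routine one. Since the Bismut connection of a Riemannian product of Hermitian manifolds is the product connection, its torsion $3$-form is the sum of the factors', and both $\nabla H=0$ and $\mathrm{d}H=0$ are tensorial, it suffices to verify the two conditions on one model $\mathbb{R}^{\ell}\times\prod_{i}S_{i}\times\mathsf{K}$ with its standard complex structure; as all factors are Riemannian homogeneous and the complex structure invariant, this reduces to a finite Lie-algebraic computation (the flat factor is trivial, the $3$-dimensional Sasaki factors are handled in the adapted frame, and for $\mathsf{K}$ the Bismut connection is one of the flat Cartan connections, for which $\mathrm{d}H=0$ is the Jacobi identity for the bi-invariant $3$-form). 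So the content is the ``only if'' direction.

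For that direction the plan is first to decompose $M$. I would take the Riemannian de Rham decomposition $M\cong\mathbb{R}^{k}\times N_{1}\times\cdots\times N_{p}$ (available since $M$ is complete and simply-connected), with the $N_{a}$ irreducible and non-flat, and then show, using $\nabla H=0$ \emph{together with} $\mathrm{d}H=0$, that $H$ has no components mixing distinct de Rham factors and that $J$ permutes the factors up to matching certain $\nabla$-parallel line subbundles across factors. The local input is that the distribution $\mathcal{D}$ generated by the image of $T$ and stable under $T$ is $\nabla$-parallel, while its orthogonal complement $\mathcal{D}^{\perp}$ is annihilated by $T$ and hence is parallel also for the Levi--Civita connection. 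This reduces the problem to classifying the indecomposable pieces together with the combinatorics of the gluing. The point to keep in mind---visible already for the Hopf surface $\mathbb{R}\times S^{3}$---is that an indecomposable factor need not be a Hermitian product: $J$ may match a flat direction of $\mathbb{R}^{k}$ with a direction of a non-flat factor, so the underlying Riemannian product carries a complex structure mixing the factors.

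It then remains to identify the building blocks and re-glue them. A de Rham factor on which $\nabla$ restricts to a flat metric connection with parallel skew torsion is, by the Cartan--Schouten theorem, either a Euclidean $\mathbb{R}^{\ell}$ or a compact simple Lie group with bi-invariant metric, with $\nabla$ a flat Cartan connection and $J$ of Samelson type; these account for the $\mathbb{R}^{\ell}$ and $\mathsf{K}$. A non-flat factor on which $\nabla$ is \emph{not} Levi--Civita carries a non-trivial $\nabla$-parallel $3$-form, and here I would appeal to the structure theory of irreducible metric connections with parallel skew torsion: the extra constraints $\nabla J=0$ and---crucially---$\mathrm{d}H=0$ rule out the higher-dimensional Sasaki structures (for them $\mathrm{d}(\eta\wedge\mathrm{d}\eta)=\mathrm{d}\eta\wedge\mathrm{d}\eta\neq0$), the nearly K\"ahler $S^{6}$, the nearly parallel $G_{2}$ and $\mathrm{Spin}(7)$ structures, and the remaining weak-holonomy cases, leaving only K\"ahler factors (where $H=0$) and $3$-dimensional Sasaki manifolds, which can enter only paired because the Reeb field spans a $\nabla$-parallel line. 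Finally, within each indecomposable factor $J$ must match the $\ell$ flat Euclidean directions injectively to Reeb directions of the $S_{i}$'s and to maximal-torus directions of $\mathsf{K}$ (matching two flat directions would split off a K\"ahler $(\mathbb{C},\mathrm{std})$, contradicting indecomposability), which is precisely $\ell\le s+r$; reassembling gives a product of the asserted form.

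The main obstacle, as I see it, is twofold. First, one must make the decomposition rigorous for the non-Levi--Civita connection $\nabla$: proving that $H$ and $J$ genuinely respect the Riemannian de Rham splitting in the precise ``product up to $J$-pairing of parallel line fields'' sense, which is the step where pluriclosedness---not merely parallelism of the torsion---is essential. Second, and more substantial, is the classification of the indecomposable non-flat pieces: systematically excluding every exotic irreducible metric connection with parallel skew torsion using $\nabla J=0$ and $\mathrm{d}H=0$, proving that the surviving odd-dimensional pieces are exactly the $3$-dimensional Sasaki manifolds, and showing that the flat directions must all be consumed in pairing, so that the sharp bound $\ell\le s+r$ (rather than a weaker inequality) comes out. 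The explicit homogeneous models and the bookkeeping of the Samelson complex structures should then be routine.
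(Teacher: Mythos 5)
Your plan is in the right spirit but has two genuine gaps, one in each direction. For the ``only if'' direction, the step you yourself flag as ``more substantial'' --- classifying the indecomposable non-flat pieces by appealing to ``the structure theory of irreducible metric connections with parallel skew torsion'' and then excluding the exotic cases --- is not something you can actually run in the form you describe. The complex structure $J$ does \emph{not} restrict to the individual Riemannian de Rham factors (as you note, it pairs Reeb directions with Euclidean directions, and likewise torus directions of $\mathsf{K}$), so the constraint $\nabla J=0$ is simply not available on a single factor, and the exclusion of nearly K\"ahler $S^6$, nearly parallel $\mathsf{G}_2$, higher Sasaki, etc.\ cannot be done factor by factor; nor is there an off-the-shelf complete list to quote. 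The paper replaces this entire step by an algebraic observation you are missing: ${\rm d}T=0$ together with $\nabla T=0$ forces $T$ to satisfy the Jacobi identity (via the identity $g(T(X,Y),T(Z,V))+\text{cyclic}=0$ of Ivanov--Papadopoulos), so $(TM,T)$ becomes a Lie algebra bundle with bi-invariant fiber metric $g$ and orthogonal linear complex structure $J$. The Samelson--Pittie root-space decomposition of each fiber then produces the $\nabla$-parallel splitting $\mathcal{Z}+\mathcal{T}+\sum_\alpha\mathcal{R}_\alpha$, and the decisive point is a curvature computation (not a classification): writing $\nabla V_\alpha=\theta_\alpha\otimes JV_\alpha$, parallelism of $T$ gives $\theta_{\alpha_1+\alpha_2}=\theta_{\alpha_1}+\theta_{\alpha_2}$, and combined with the torsionless first Bianchi identity this forces every ideal of rank at least $2$ to be Bismut flat. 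That is what leaves only $\mathfrak{su}(2)$-ideals (the $3$-dimensional Sasaki factors) and a Bismut-flat semisimple part, and it is also where the bound $\ell\le s+r$ comes from (the center cannot contain a $J$-invariant subbundle without splitting off a K\"ahler factor). Your ``$\mathrm{d}(\eta\wedge\mathrm{d}\eta)\neq 0$'' remark excludes one family but is nowhere near a proof that nothing else survives.

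The ``if'' direction is also not as routine as you claim. The factors $S_i$ and the torus directions of $\mathsf{K}$ are odd-dimensional and are mixed by $J$, so the model is \emph{not} a product of Hermitian manifolds and you cannot verify ${\rm d}T=0$, $\nabla T=0$ factor by factor until you have first computed $T=-{\rm d}^c\omega$ globally and shown that it splits as $\sum_i H_i^*\wedge{\rm d}H_i^*+B$ despite $J$ pairing directions across factors; this is a genuine (if elementary) computation. Moreover the Sasaki $3$-manifolds are arbitrary, not homogeneous, so the reduction to ``a finite Lie-algebraic computation'' via homogeneity does not apply --- indeed one of the points of the theorem is that it produces non-locally-homogeneous examples.
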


Here, by {\it standard complex structure} on \eqref{eq:usualproduct}, we mean an integrable almost complex structure $J$ that is compatible with the transverse complex distribution on each Sasaki factor, projects onto a $\mathsf{K}$-invariant complex structure on the full flag manifold $\mathsf{K}/\mathsf{T}$ for a chosen maximal torus $\mathsf{T} \subset \mathsf{K}$ and preserves the distribution spanned by the Euclidean factor, the Reeb vector fields and the torus $\mathsf{T}$ (see Definition \ref{def:standard}).

The fact that Riemannian manifolds \eqref{eq:usualproduct} endowed with a standard complex structure are pluriclosed with parallel Bismut torsion follows by direct computation. On the other hand, the converse implication in Theorem \ref{thm:MAIN-A} relies on two crucial algebraic facts. The first one is that the two conditions ${\rm d}T = 0$ and $\nabla T = 0$ imply that the torsion $T$ satisfies the Jacobi identity, hence it gives rise to a Lie algebra bundle on $TM$ with typical fiber $\mathfrak{g}$. Moreover, the Hermitian structure $(J,g)$ induces both a bi-invariant metric and an orthogonal linear complex structure on $\mathfrak{g}$. Therefore, the root spaces decomposition of $\mathfrak{g}$ determines a de Rham decomposition of the manifold $(M,J,g)$. The second one is that each de Rham factor corresponding to an ideal of $\mathfrak{g}$ with rank at least $2$ is necessarily Bismut flat, while the others are $3$-dimensional Sasaki manifolds. Finally, by \cite[Theorem 2.2]{MR2651537}, the Bismut flat factor is a Lie group with a bi-invariant metric.

Notice that results for irreducible Riemannian manifolds endowed with metric connections with closed and parallel totally skew-symmetric torsion appeared in \cite{MR3319112}. However, in order to obtain Theorem \ref{thm:MAIN-A}, it is crucial to exploit the datum of the parallel complex structure that induces further rigidity for pluriclosed manifolds with parallel Bismut torsion. \smallskip

Notice that pluriclosed manifolds with parallel Bismut torsion were classified, up to complex dimension five, in \cite{MR4554474, MR4577328, ZZ4-5} (see also Section \ref{sect:low-dim}). The case of surfaces is particularly relevant since, in that case, these coupled conditions are equivalent to being {\it Vaisman} \cite[Theorem 2]{MR4554474}. Furthermore, Vaisman metrics always have parallel Bismut torsion \cite[Corollary 3.8]{MR4480223} but, in complex dimension greater than $2$, Vaisman manifolds do not admit pluriclosed metrics \cite[Theorem 3.1]{angella-otiman} (see also \cite[Theorem 4]{MR4577328}). Notice also that the argument in \cite{angella-otiman} extends to any smooth modification of a Vaisman manifold. This gives partial evidence to \cite[Conjecture 1]{MR4577328}, where the authors suggest that compact pluriclosed manifolds with parallel Bismut torsion of dimension greater than $2$ do not admit {\it locally conformally K\"ahler metrics}. We believe that Theorem \ref{thm:MAIN-A} could provide more insights on this conjecture. \smallskip

We remark that the conditions ${\rm d}T = 0$ and $\nabla T = 0$ coupled together have appeared in the study of symmetries of the curvature tensor of the Bismut connection. Let us recall that, following \cite[Definition 4 and Remark 3]{MR4564028}, a Hermitian manifold $(M, J, g)$ is called {\it Bismut K\"ahler-like} ({\it BKL} for short) if the curvature tensor $R$ of its Bismut connection satisfies the {\it torsionless first Bianchi identity}
\begin{equation} \label{eq:1bncintro} \tag{1-Bnc}
R_{X,Y}Z +R_{Y,Z}X +R_{Z,X}Y = 0 \,\, .
\end{equation}
Then, as a result of Zhao and Zheng \cite[Theorem 1]{MR4554474}, a Hermitian manifold $(M,J,g)$ is pluriclosed with parallel Bismut torsion if and only if it is BKL. \smallskip

The study of Hermitian manifolds whose curvature tensor, with respect to the Levi-Civita or the Chern connections, satisfies further symmetries, was initiated by Gray \cite{MR436054}, and then it was recently studied also in \cite{MR3900484,MR3632564}. This study was then extended in \cite{MR4564028} to a class of metric connections that includes Gauduchon's canonical ones. Accordingly, a Gauduchon connection is called {\it K\"ahler-like} if its curvature tensor satisfies \eqref{eq:1bncintro}. As a matter of fact, on a K\"ahler manifold, all the canonical connections have vanishing torsion, and so \eqref{eq:1bncintro} coincides with the first Bianchi identity. Clearly, \eqref{eq:1bncintro} is trivially satisfied in the case of flat curvature, that has been studied {\it e.g.\ }in \cite{MR103990, MR4127891, MR2247437, MR2795448, MR3843433, MR4002291, MR4058531}. Recently, it has been proved in \cite[Theorem 3.1]{lafuente2022hermitian} that for Gauduchon connections which are neither the Chern nor the Bismut one, the K\"ahler-like condition on compact manifolds forces the metric to be K\"ahler. Therefore, the only possible non-K\"ahler compact manifolds that admit this kind of curvature symmetry are BKL or {\it Chern K\"ahler-like}. We remark that Chern K\"ahler-like manifolds are known to be {\it balanced} \cite[Theorem 3]{MR3900484}, but, to the best of our knowledge, a non-Chern flat example is still missing.

Simply-connected Bismut flat manifolds were classified in \cite{MR4127891}, where the authors showed that they are holomorphically isometric to the so called {\it Samelson spaces}. We recall that a Samelson space is a simply-connected even dimensional Lie group $\mathbb{R}^{\ell} \times \mathsf{K}$ endowed with a bi-invariant metric and an orthogonal left-invariant complex structure, where $\mathsf{K}$ is compact and semisimple. These manifolds are known to be pluriclosed (see {\it e.g.\ }\cite{MR1836272}), and we point out that there are a few examples of pluriclosed metrics that are not locally homogeneous. Concerning this, Theorem \ref{thm:MAIN-A} provides new examples of pluriclosed manifolds that are not locally homogeneous.

Bismut flat manifolds are related to Yau's Problem 87 \cite{MR1216573} concerning compact Hermitian manifolds with holonomy reduced to subgroups of $\mathsf{U}(n)$. Indeed, the holonomy of the Bismut connection is always contained in $\mathsf{U}(n)$, and it reduces to $\mathsf{SU}(n)$ if and only if its {\it Bismut Ricci form} $\rho$, obtained by tracing the Bismut curvature tensor $R$ in the endomorphism components, vanishes.
These manifolds are called {\it Calabi--Yau with torsion} ({\it CYT} for short). When the CYT condition is coupled with the BKL condition on a simply-connected manifold, then one obtains a product of a K\"ahler Ricci flat factor and a Samelson space (see \cite[Theorem 3]{ZZ4-5} and \cite[Theorem 1.1]{brienza2024cyt}). As a direct consequence of Theorem \ref{thm:MAIN-A}, we recover this result for simply-connected manifolds. Furthermore, by studying their fundamental groups, we also characterize compact BKL and CYT manifolds, up to finite covers.

\begin{thmint} \label{thm:MAIN-B}
Let $(M^{2n}, J, g)$ be a compact Hermitian manifold. Then, $(M, J, g)$ is CYT and pluriclosed with parallel Bismut torsion if and only if it splits, up to a finite cover, as a Hermitian product
$$
\mathcal{Y} \times M' \,\, ,
$$
where $\mathcal{Y}$ is K\"ahler Ricci flat and $M'$ is a local Samelson space.
\end{thmint}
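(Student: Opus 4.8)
The plan is to reduce Theorem~\ref{thm:MAIN-B} to Theorem~\ref{thm:MAIN-A} by lifting to the universal cover, using the Calabi--Yau with torsion hypothesis to discard the ``wrong'' irreducible factors, and then transporting the resulting splitting back down to $M$ through an analysis of $\pi_{1}(M)$.

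\textit{The ``if'' direction.} Being Calabi--Yau with torsion, i.e.\ $\rho = 0$, and being pluriclosed with parallel Bismut torsion are local conditions, stable under Hermitian products and under passing to and from finite covers. A K\"ahler Ricci-flat manifold has $\nabla = D$, so $T = 0$ and $\rho = 0$; a Samelson space is Bismut flat \cite{MR4127891}, hence Calabi--Yau with torsion, and it is pluriclosed with parallel Bismut torsion (see \cite{MR1836272} and the computation realizing the examples in Theorem~\ref{thm:MAIN-A}). Therefore local Samelson spaces, and hence any manifold finitely covered by a Hermitian product $\mathcal{Y} \times M'$ as in the statement, are Calabi--Yau with torsion and pluriclosed with parallel Bismut torsion.

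\textit{The ``only if'' direction, on the universal cover.} Let $\pi \colon \widetilde{M} \to M$ be the universal covering. Since $M$ is compact, $\widetilde{M}$ is complete and simply connected, and $(\widetilde{M}, \pi^{*}J, \pi^{*}g)$ is again Calabi--Yau with torsion and pluriclosed with parallel Bismut torsion. By Theorem~\ref{thm:MAIN-A}, $\widetilde{M}$ decomposes as a Hermitian product $\prod_{\alpha} N_{\alpha}$ of irreducible factors, each of which is either K\"ahler or of the form \eqref{eq:usualproduct} with a standard complex structure. The Bismut connection of a Hermitian product is the direct sum of the Bismut connections of the factors, so its curvature $R$, and hence its Ricci form $\rho$, splits accordingly; evaluating at a point, where the summands act on mutually orthogonal subspaces, $\rho_{\widetilde{M}} = 0$ forces $\rho_{N_{\alpha}} = 0$ for every $\alpha$. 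If $N_{\alpha}$ is K\"ahler, then $\rho_{N_{\alpha}}$ is its Ricci form, so $N_{\alpha}$ is K\"ahler Ricci-flat. If instead $N_{\alpha} = \mathbb{R}^{\ell} \times \prod_{i} S_{i} \times \mathsf{K}$ with a standard complex structure, then a direct computation of its Bismut curvature---the same one needed to verify that \eqref{eq:usualproduct} provides examples in Theorem~\ref{thm:MAIN-A}---shows, together with the torsionless first Bianchi identity, that $\rho_{N_{\alpha}} = 0$ is possible only when $N_{\alpha}$ is Bismut flat; by the classification of simply-connected Bismut flat manifolds \cite{MR4127891}, $N_{\alpha}$ is then a Samelson space, and comparing with the presentation \eqref{eq:usualproduct} every Sasaki factor $S_{i}$ is a round $3$-sphere. (This reproves, for simply-connected manifolds, \cite[Theorem~3]{ZZ4-5} and \cite[Theorem~1.1]{brienza2024cyt}.) Regrouping the factors, $\widetilde{M} = \mathcal{Y} \times M''$ with $\mathcal{Y}$ a product of simply-connected K\"ahler Ricci-flat manifolds, hence K\"ahler Ricci-flat, and $M''$ a product of Samelson spaces, hence a Samelson space; absorbing a purely Euclidean $M''$ into $\mathcal{Y}$, we may assume the compact semisimple part of $M''$ is non-trivial, so that the Bismut torsion $T_{M''}$ does not vanish.

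\textit{Descending to $M$.} It remains to produce the stated finite cover. Observe first that $\widetilde{M}$ has nonnegative Ricci curvature: $\mathcal{Y}$ is Ricci-flat and the bi-invariant metric of $M''$ satisfies $\mathrm{Ric} \geq 0$. Hence, by the Cheeger--Gromoll splitting theorem applied to the compact manifold $M$, the maximal flat de~Rham factor of $\widetilde{M}$ splits off a torus in a finite cover of $M$, and all other de~Rham factors of $\widetilde{M}$ are compact; in particular $\mathcal{Y}$ has no non-flat noncompact factor. Next, $\Gamma := \pi_{1}(M)$, acting on $\widetilde{M} = \mathcal{Y} \times M''$ by holomorphic isometries, preserves the Bismut torsion $T = 0 \oplus T_{M''}$, hence the $g$-orthogonal, $\Gamma$-invariant, Levi-Civita-parallel distribution $\operatorname{Im} T$ (the semisimple directions of $M''$) and its orthogonal complement. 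Combining this with the fact that the irreducible non-flat de~Rham factors of $\widetilde{M}$ are Ricci-flat inside $\mathcal{Y}$ but positively Einstein inside $M''$---so that no isometry can exchange the two types---and with the structure of the group of holomorphic isometries of a Samelson space, one obtains a finite-index subgroup $\Gamma' \leq \Gamma$ that splits as $\Gamma' = \Gamma_{1} \times \Gamma_{2}$ with $\Gamma_{1}$ acting freely and cocompactly on $\mathcal{Y}$ and $\Gamma_{2}$ acting freely and cocompactly on $M''$, compatibly with the Hermitian product. Then $M$ is finitely covered by $(\mathcal{Y}/\Gamma_{1}) \times (M''/\Gamma_{2})$, where $\mathcal{Y}/\Gamma_{1}$ is a compact K\"ahler Ricci-flat manifold and $M''/\Gamma_{2}$ is, by definition, a local Samelson space. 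The main obstacle is precisely this last step: the standard complex structure couples the Euclidean directions of $M''$ with the maximal torus sitting inside its non-flat simple factors, so the required splitting of $\Gamma'$ cannot be read off from the Riemannian de~Rham decomposition alone but must be matched with the Hermitian product structure $\mathcal{Y} \times M''$, and it is here that the compactness of $M$ enters in an essential way.
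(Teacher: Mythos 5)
Your reduction to Theorem \ref{thm:MAIN-A} on the universal cover is sound and matches the paper's route: the Bismut Ricci form splits over the irreducible factors, and on a non-K\"ahler factor \eqref{eq:usualproduct} the curvature is concentrated in the $s$ scalar components $R(E_i,JE_i,JE_i,E_i)$ on the Sasaki directions (this is exactly \eqref{eq:almost-flat} combined with Propositions \ref{prop:flatwhite} and \ref{prop:trick}), so $\rho=0$ forces $s=0$ and the factor is a Samelson space. The genuine gap is in your descent step. You assert that a finite-index subgroup $\Gamma'\leq\pi_1(M)$ splits as $\Gamma_1\times\Gamma_2$ acting separately on $\mathcal{Y}$ and on $M''$, and you concede yourself that you cannot derive this from the de Rham decomposition alone. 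The missing ingredient is not a refined matching with the Hermitian structure but a finiteness statement: since $\mathcal{Y}$ is compact (Cheeger--Gromoll, which you do invoke) and simply-connected with vanishing Ricci tensor, the Bochner formula kills all Killing fields, so ${\rm Iso}(\mathcal{Y})$ is \emph{finite}. Combined with the splitting ${\rm Iso}(\widetilde{M})={\rm Iso}(\mathcal{Y})\times{\rm Iso}(\mathbb{R}^{\ell})\times{\rm Iso}(\mathsf{K})$ (which requires checking that no K\"ahler de Rham factor of $\mathcal{Y}$ is isometric to a factor of $\mathsf{K}$, so the permutation part of Lemma \ref{lem:IsodeRham} is harmless), the kernel of the projection $\tau_{\mathcal{Y}}:\pi_1(M)\to{\rm Iso}(\mathcal{Y})$ has finite index and acts trivially on $\mathcal{Y}$. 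The corresponding finite cover is then automatically $\mathcal{Y}\times N$ with $N=(\mathbb{R}^{\ell}\times\mathsf{K})/\ker(\tau_{\mathcal{Y}})$; no product decomposition of the group itself is needed. Without the Bochner step you have no control on the $\mathcal{Y}$-component of the deck action and no reason for any finite-index subgroup with the properties you claim to exist.

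A second, smaller gap: even granting your $\Gamma_2$, the quotient $M''/\Gamma_2$ is not a local Samelson space ``by definition''. A local Samelson space is the quotient of $\mathbb{R}^{\ell}\times\mathsf{K}$ by a \emph{specific} free $\mathbb{Z}^{\ell}$-action $\gamma\cdot(t,a)=(t+\gamma,\psi(\gamma).a)$, whereas $\Gamma_2$ is a priori an arbitrary discrete cocompact group of isometries. That a compact Bismut flat manifold is finitely covered by a local Samelson space is the content of \cite[Theorem 1]{MR4127891}, which must be cited (as the paper does) and costs a further finite cover.
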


Here, following \cite{MR4127891}, a {\it local Samelson space} is the quotient $M' = \big(\mathbb{R}^{\ell} \times \mathsf{K}\big) / \mathbb{Z}^{\ell}$ of a Samelson space by a free abelian group of rank $\ell$ acting as $\gamma \cdot (t,a) \coloneqq (t +\gamma, \psi(\gamma).a)$ for some group homomorphism $\psi: \mathbb{Z}^{\ell} \to {\rm Iso}(\mathsf{K})$ into the isometry group ${\rm Iso}(\mathsf{K})$ of $\mathsf{K}$. \smallskip

CYT manifolds appeared in the Physics literature after the works of Strominger \cite{MR851702} and Hull \cite{MR862401}, and have been extensively studied (see {\it e.g.\ }\cite{MR2015241, MR2101226, MR2406264, MR2521810, MR2764884, MR3643933, MR3972000, MR4592898, MR4554058, MR4614394, barbaro2023bismut, brienza2024cyt}). Moreover, pluriclosed CYT metrics (known also as {\it Bismut Hermitian Einstein} \cite[Definition 8.11]{MR4284898} are the only non-K\"ahler static points for the {\it pluriclosed flow}, that is a generalization of the classical K\"ahler--Ricci flow introduced by Streets and Tian \cite{MR2673720}, and this motivates the search for explicit examples that are neither K\"ahler, nor Bismut flat, nor a product of them. The first example, which is not topologically a product, also has parallel Bismut torsion and has been found in \cite{brienza2024cyt}. Notice that, by means of Theorem \ref{thm:MAIN-B}, compact Bismut Hermitian Einstein with parallel Bismut torsion are always finitely covered by Hermitian products of K\"ahler Ricci flat manifolds and Bismut flat manifolds, and this further motivates the search of Bismut Hermitian Einstein examples without parallel Bismut torsion.

Finally, in \cite{MR4257077, MR4517715, MR4732911}, the authors have shown evidence that the pluriclosed flow could preserve the BKL condition. We believe that Theorem \ref{thm:MAIN-A} could provide insights in this direction, and it will be the object of further studies. \medskip

The paper is organized as follows. Section \ref{sect:prel} contains background material on the Bismut connection and on compact Lie algebras. In Section \ref{sect:if-part}, we provide a construction for a general class of pluriclosed manifolds with parallel Bismut torsion. Section \ref{sect:classification} contains the proof of Theorem \ref{thm:MAIN-A}. In Section \ref{sect:low-dim}, we explain how Theorem \ref{thm:MAIN-A} extends all the previously known classification results in low dimension. Finally, in Section \ref{sect:BKL+CYT}, we prove Theorem \ref{thm:MAIN-B}. \medskip

\noindent {\itshape Acknowledgements.\ } The authors would like to thank Ilka Agricola, Daniele Angella, Leander Stecker, Luigi Verdiani and Fangyang Zheng for their comments and suggestions. They also thank the anonymous referee for their careful reading of the manuscript and for their constructive comments.

\medskip
\section{Preliminaries}
\label{sect:prel} \setcounter{equation} 0

In this section, we review some preliminaries on the Bismut connection and Lie theory that will be useful in the sequel.

\subsection{The Bismut connection} \label{sect:prelBKL} \hfill \par

Let $(M^{2n},J,g)$ be a Hermitian manifold. For the sake of notation, we denote by $\Gamma(\EuScript{U}; E)$ the space of local smooth sections of any vector bundle $E \to M$ over $M$ defined on an open set $\EuScript{U} \subset M$. If $\mathcal{D} \subset TM$ is a smooth distribution of the tangent bundle, we denote by
$$
\Gamma(\EuScript{U};\mathcal{D}) \coloneqq \{X \in \Gamma(\EuScript{U};TM) : \text{$X|_p \in \mathcal{D}_p$ for any $p \in \EuScript{U}$}\}
$$
the space of sections of $\mathcal{D}$ defined on $\EuScript{U}$. Finally, if $X$ is a vector field on $M$ and $\Phi$ is a tensor field on $M$ of type $(p,q)$, with $q \geq 1$, we denote by $X\,\lrcorner\,\Phi$ the tensor field on $M$ of type $(p,q-1)$ defined by 
$$
(X\,\lrcorner\,\Phi)(V_1,{\dots},V_{q-1}) \coloneqq \Phi(X,V_1,{\dots},V_{q-1}) \,\, .
$$

By the Newlander--Nirenberg Theorem, the fact that $J$ is integrable is equivalent to the vanishing of the Nijenhuis tensor $N_J$ of $J$, {\it i.e.},
$$
N_J(X,Y) \coloneqq [JX,JY] -[X,Y] -J[JX,Y] -J[X,JY] = 0 \,\, .
$$
We consider the action of $J$ on covectors $\vartheta \in \Gamma(\EuScript{U};T^*M)$ defined by $J\vartheta \coloneqq \vartheta(J^{-1}\,\cdot\,)$, which extends on any $k$-form on $M$. We denote by $\omega \coloneqq g(J\,\cdot\, , \,\cdot\, )$ the corresponding fundamental $2$-form and by ${\rm d}^c$ the real operator defined by ${\rm d}^c \coloneqq - J^{-1} \circ {\rm d} \circ J$, so that
\begin{equation} \label{eq:torsionBismut}
	{\rm d}^c\omega(X,Y,Z) = -{\rm d}\omega(JX,JY,JZ) \,\, .
\end{equation}
For later use, we prove the following technical result.

\begin{lemma}
For any $X, Y, Z \in \Gamma(\EuScript{U};TM)$, the following equation holds true:
\begin{equation} \label{eq:dwJ}
{\rm d}\omega(JX,JY,JZ) = {\rm d}\omega(X,Y,JZ) +{\rm d}\omega(X,JY,Z) +{\rm d}\omega(JX,Y,Z) \,\, .
\end{equation}
\end{lemma}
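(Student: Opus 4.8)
The plan is to recognise \eqref{eq:dwJ} as a restatement of the integrability of $J$: concretely, it holds because ${\rm d}\omega$ has no component of bidegree $(3,0)$ or $(0,3)$. First I would observe that both sides of \eqref{eq:dwJ} are $\mathbb{R}$-trilinear and alternating in $X,Y,Z$. For the left-hand side this is obvious, and for the right-hand side transposing any two of the arguments reverses the sign, as one checks in one line from the antisymmetry of ${\rm d}\omega$ (using $J^{2}=-{\rm id}$). Hence it suffices to verify \eqref{eq:dwJ} after extending $J$ and ${\rm d}\omega$ complex-linearly to $TM\otimes\mathbb{C}$ and testing on vectors of pure type: on a $(1,0)$-vector $J$ acts as $+i$, on a $(0,1)$-vector as $-i$, and by alternation only the number of $(1,0)$-arguments among $X,Y,Z$ matters, so there are just four cases to consider.

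The Hermitian condition $g(J\,\cdot\,,J\,\cdot\,)=g$ gives $\omega(J\,\cdot\,,J\,\cdot\,)=\omega$, so $\omega$ is of type $(1,1)$; by the Newlander--Nirenberg theorem ${\rm d}$ splits as $\partial+\bar\partial$, so ${\rm d}\omega=\partial\omega+\bar\partial\omega$ has only components of bidegree $(2,1)$ and $(1,2)$, and in particular ${\rm d}\omega(X,Y,Z)=0$ whenever $X,Y,Z$ are all of type $(1,0)$ or all of type $(0,1)$. Now I would run through the four cases. If $X,Y,Z$ are all of type $(1,0)$, the left-hand side of \eqref{eq:dwJ} is $i^{3}\,{\rm d}\omega(X,Y,Z)$ and the right-hand side is $3i\,{\rm d}\omega(X,Y,Z)$, and both vanish since the $(3,0)$-part of ${\rm d}\omega$ is zero; the all-$(0,1)$ case is identical, using that the $(0,3)$-part vanishes. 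In the two mixed cases — two arguments of type $(1,0)$ and one of type $(0,1)$, or the reverse — one collects the factors $\pm i$ produced by the three $J$'s on the left and by the single $J$ on the right, and finds that both sides equal $\pm i\,{\rm d}\omega(X,Y,Z)$ with the same sign; here no vanishing is required, the identity holds automatically. This exhausts the cases and establishes \eqref{eq:dwJ}.

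I do not expect a genuine obstacle: the only input is the integrability of $J$, entering solely through the absence of the $(3,0)$ and $(0,3)$ parts of ${\rm d}\omega$, and the rest is sign bookkeeping. Should a bidegree-free argument be preferred, one can instead expand each of the six occurrences of ${\rm d}\omega$ in \eqref{eq:dwJ} by the Koszul formula for the exterior derivative of a $2$-form; the terms of the form $X\,\omega(Y,Z)$ cancel after using $\omega(J\,\cdot\,,J\,\cdot\,)=\omega$, and the remaining Lie-bracket terms assemble into values of the Nijenhuis tensor $N_J$, which vanishes since $J$ is integrable.
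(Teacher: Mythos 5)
Your proof is correct, but it takes a genuinely different route from the paper's. The paper proves \eqref{eq:dwJ} by brute force: it expands all four terms ${\rm d}\omega(JX,JY,JZ)$, ${\rm d}\omega(JX,Y,Z)$, ${\rm d}\omega(X,JY,Z)$, ${\rm d}\omega(X,Y,JZ)$ via the Koszul formula for the exterior derivative of a $2$-form, cancels the Lie-derivative terms, and recognizes the surviving bracket terms as $-g(N_J(X,Y),Z)-g(N_J(Y,Z),X)-g(N_J(Z,X),Y)$, which vanishes by integrability --- precisely the ``bidegree-free argument'' you sketch in your closing paragraph. Your main argument instead identifies \eqref{eq:dwJ} as the statement that $({\rm d}\omega)^{(3,0)}=({\rm d}\omega)^{(0,3)}=0$: after checking that both sides are alternating and trilinear (so that one may complexify and test on pure-type triples, ordered by the number of $(1,0)$-arguments), the two extreme cases reduce to the vanishing of the $(3,0)$ and $(0,3)$ parts of ${\rm d}\omega$ (which holds since $\omega$ is $(1,1)$ and ${\rm d}=\partial+\bar\partial$ by integrability), and the two mixed cases are checked to hold identically by collecting factors of $\pm\mathtt{i}$; I verified the sign bookkeeping in all four cases and it is right. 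Your version is more conceptual and makes transparent exactly which piece of ${\rm d}\omega$ the identity is detecting, at the cost of invoking the decomposition of ${\rm d}$ on an integrable complex manifold; the paper's version is elementary and self-contained, and exhibits explicitly the combination of Nijenhuis terms through which integrability enters, which fits the paper's later use of \eqref{eq:dwJ} to show that $J_p$ is a linear complex structure on the fiber Lie algebras.
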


\begin{proof}
A direct computation shows that
$$\begin{aligned}
{\rm d}\omega(JX,JY,JZ) &= \mathcal{L}_{JX}\{g(JY,Z)\} +\mathcal{L}_{JY}\{g(JZ,X)\} +\mathcal{L}_{JZ}\{g(JX,Y)\} \\
&\qquad\qquad -g([JX,JY],Z) -g([JY,JZ],X) -g([JZ,JX],Y) \,\, , \\
{\rm d}\omega(JX,Y,Z) &= \mathcal{L}_{JX}\{g(JY,Z)\} +\mathcal{L}_Y\{g(Z,X)\} -\mathcal{L}_Z\{g(X,Y)\} \\
&\qquad\qquad +g([JX,Y],JZ) -g([Y,Z],X) +g([Z,JX],JY) \,\, , \\
{\rm d}\omega(X,JY,Z) &= -\mathcal{L}_X\{g(Y,Z)\} +\mathcal{L}_{JY}\{g(JZ,X)\} +\mathcal{L}_Z\{g(X,Y)\} \\
&\qquad\qquad +g([X,JY],JZ) +g([JY,Z],JX) -g([Z,X],Y) \,\, , \\
{\rm d}\omega(X,Y,JZ) &= \mathcal{L}_X\{g(Y,Z)\} -\mathcal{L}_Y\{g(Z,X)\} +\mathcal{L}_{JZ}\{g(JX,Y)\} \\
&\qquad\qquad -g([X,Y],Z) +g([Y,JZ],JX) +g([JZ,X],JY) \,\, .
\end{aligned}$$
Therefore, we get
$$\begin{aligned}
&{\rm d}\omega(JX,JY,JZ) -{\rm d}\omega(JX,Y,Z) -{\rm d}\omega(X,JY,Z) -{\rm d}\omega(X,Y,JZ) = \\
&\qquad\qquad\qquad\qquad\qquad\qquad = -g(N_J(X,Y),Z) -g(N_J(Y,Z),X) -g(N_J(Z,X),Y) \\
&\qquad\qquad\qquad\qquad\qquad\qquad = 0
\end{aligned}$$
and so \eqref{eq:dwJ} follows.
\end{proof}

We denote by $D$ the Levi-Civita connection of $(M,g)$ and by $\nabla$ the {\it Bismut connection} of $(M,J,g)$ defined by
\begin{equation} \label{eq:Bismutconn}
g(\nabla_XY,Z) \coloneqq g(D_XY,Z) -\tfrac12{\rm d}^c\omega(X,Y,Z) \,\, .
\end{equation}
We denote by $T$ the torsion tensor of $\nabla$, {\it i.e.},
$$
T(X,Y) \coloneqq \nabla_XY -\nabla_YX -[X,Y] \,\, ,
$$
and we remark that an easy computation shows that
$$
g(T(X,Y),Z) = -{\rm d}^c\omega(X,Y,Z) \,\ .
$$
For this reason, the Bismut connection is said to have {\it totally skew-symmetric torsion}. By \cite{MR1006380, MR1456265}, the Bismut connection is the unique linear connection with totally skew-symmetric torsion that preserves the Hermitian structure, {\it i.e.}, $\nabla J=0$ and $\nabla g=0$. We also denote by $R$ the curvature tensor of $\nabla$ defined as
$$
R_{X,Y}Z \coloneqq \nabla_X\nabla_YZ -\nabla_Y\nabla_XZ -\nabla_{[X,Y]}Z \,\, .
$$
For the sake of notation, we will also denote by $R$ the corresponding covariant tensor
$$
R(X,Y,Z,V) \coloneqq g(R_{X,Y}Z,V)
$$
and by $\rho$ the {\it (first) Bismut Ricci form} defined by
\begin{equation} \label{eq:BRicci}
\rho(X,Y) \coloneqq \sum_{i=1}^n R(X,Y,Je_i,e_i) \,\, ,
\end{equation}
where $(e_i,Je_i)_{i=1,{\dots},n}$ is a local unitary frame. \smallskip

The connection $\nabla$ is said to satisfy the {\it torsionless first Bianchi identity} if
\begin{equation} \label{eq:1bianchi} \tag{1-Bnc}
R_{X,Y}Z +R_{Y,Z}X +R_{Z,X}Y = 0
\end{equation}
for any $X,Y,Z \in \Gamma(\EuScript{U};TM)$. We stress the following

\begin{lemma} \label{lem:surprise}
Assume that \eqref{eq:1bianchi} is satisfied. Then, for any $X, Y, Z, V \in \Gamma(\EuScript{U};TM)$, the following equations are satisfied:
\begin{align}
R(X,Y,Z,V) &= R(Z,V,X,Y) \,\, , \label{eq:symR} \\
R_{JX,JY} &= R_{X,Y} \,\, . \label{eq:Rcomplex}
\end{align}
\end{lemma}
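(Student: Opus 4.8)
The plan is to exploit the fact, recalled in the excerpt, that the Bismut connection has totally skew-symmetric torsion, so that the usual symmetries of the curvature tensor of a metric connection with skew torsion are available, together with the identity \eqref{eq:1bianchi}. Concretely, for any metric connection $\nabla$ preserving $g$ one always has the antisymmetry in the last two slots, $R(X,Y,Z,V) = -R(X,Y,V,Z)$, and of course the antisymmetry in the first two slots by definition of the curvature operator. The extra input \eqref{eq:1bianchi} says that cyclic permutation in the first three arguments kills the curvature. The standard algebraic lemma (the ``octahedral'' manipulation familiar from Riemannian geometry) then shows that these three facts together force the pair-symmetry \eqref{eq:symR}. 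So the first step is to write out the four cyclic instances of \eqref{eq:1bianchi} obtained by permuting $X,Y,Z,V$, pair them appropriately, and add/subtract: using the two antisymmetries the sum telescopes to $2R(X,Y,Z,V) - 2R(Z,V,X,Y) = 0$, which is exactly \eqref{eq:symR}. This is the same combinatorial identity one uses to deduce $R(X,Y,Z,V)=R(Z,V,X,Y)$ from the first Bianchi identity in the torsion-free case; nothing about skew torsion obstructs it, since \eqref{eq:1bianchi} is the torsionless form of Bianchi.

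\textbf{Deriving \eqref{eq:Rcomplex}.} For the second identity I would use that $\nabla J = 0$. Since the Bismut connection parallelizes $J$, the curvature operator commutes with $J$: $R_{X,Y} \circ J = J \circ R_{X,Y}$ for all $X,Y$. Equivalently, in covariant form, $R(X,Y,JZ,JV) = R(X,Y,Z,V)$. Now combine this with \eqref{eq:symR}, which we have just established:
\begin{equation*}
R(JX,JY,Z,V) = R(Z,V,JX,JY) = R(Z,V,X,Y) = R(X,Y,Z,V),
\end{equation*}
where the first and last equalities are \eqref{eq:symR} and the middle one is the $J$-invariance in the last two slots (applied with the roles of the pairs swapped). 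Since $V$ is arbitrary this gives $R_{JX,JY} = R_{X,Y}$, which is \eqref{eq:Rcomplex}. Note this step genuinely needs \eqref{eq:symR} — without pair-symmetry, $\nabla J = 0$ only gives $J$-invariance in the pair that is being differentiated, i.e. the last two arguments, not the first two.

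\textbf{Main obstacle.} There is no deep obstacle here; the statement is purely formal once one has the right bookkeeping. The only thing to be careful about is sign conventions: the precise form of the antisymmetry in the first two slots of $R$ and the convention for $R_{X,Y}Z$ must be tracked so that the four cyclic copies of \eqref{eq:1bianchi} cancel in the intended way, and one must make sure the $J$-invariance is stated in the slot-pair consistent with the definition $R(X,Y,Z,V) = g(R_{X,Y}Z,V)$. I would also double-check that completeness and the Hermitian (as opposed to merely almost-Hermitian) hypotheses play no role — they do not: \eqref{eq:symR} and \eqref{eq:Rcomplex} follow from $\nabla g = 0$, $\nabla J = 0$, and \eqref{eq:1bianchi} alone, pointwise on $\EuScript{U}$. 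So the write-up is: (1) record the two built-in antisymmetries and the $J$-commutation; (2) the four-term cyclic computation giving \eqref{eq:symR}; (3) the three-line chain above giving \eqref{eq:Rcomplex}.
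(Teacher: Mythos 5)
Your proposal is correct and matches the paper's proof: the paper simply cites \eqref{eq:symR} as the well-known algebraic consequence of $\nabla g=0$ (the two antisymmetries) plus \eqref{eq:1bianchi}, which is exactly the four-term cyclic cancellation you describe, and then derives \eqref{eq:Rcomplex} from $\nabla J=0$ combined with \eqref{eq:symR}, precisely as in your three-line chain. You have merely written out the details the paper leaves implicit.
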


\begin{proof}
Since $\nabla$ preserves the metric $g$, \eqref{eq:symR} is a well-known algebraic consequence of \eqref{eq:1bianchi}. Moreover, since $\nabla$ preserves also $J$, \eqref{eq:Rcomplex} follows from \eqref{eq:symR}.
\end{proof}

We recall then the following

\begin{definition}[Definition 4 in \cite{MR4564028}]
The Hermitian manifold $(M, J, g)$ is {\it Bismut K\"ahler-like} ({\it BKL} for short) if the torsionless first Bianchi identity \eqref{eq:1bianchi} is satisfied.
\end{definition}

Notice that, as a consequence of Lemma \ref{lem:surprise}, equation \eqref{eq:Rcomplex} is automatically satisfied on a BKL manifold.

\begin{rem}
We observe that the proof of Lemma \ref{lem:surprise} can be carried out for any Hermitian connection. Therefore, the K{\"a}hler-like condition \cite[Definition 4]{MR4564028} for a Hermitian connection reduces to the torsionless first Bianchi identity \eqref{eq:1bianchi} (see also \cite[Remark 3]{MR4564028}).
\end{rem}

Finally, by \cite[Theorem 1]{MR4554474}, it is known that $(M,J,g)$ is BKL if and only if the torsion 3-form $-{\rm d}^c\omega$ of the Bismut connection is ${\rm d}$-closed and $\nabla$-parallel, {\it i.e.},
\begin{equation} \label{eq:BKL}
{\rm d}{\rm d}^c\omega = 0 \,\, , \quad \nabla {\rm d}^c\omega = 0 \,\, .
\end{equation}
Recall that, when ${\rm d}{\rm d}^c\omega = 0$, the metric $g$ is called {\it pluriclosed}, and so the BKL condition is then equivalent to being {\it pluriclosed with parallel Bismut torsion}. Even though the focus of the article is on the conditions \eqref{eq:BKL}, we will address these manifolds as BKL for the sake of simplicity.

\subsection{Some basics on compact Lie algebras} \label{sect:prelLie} \hfill \par

Let $\mathfrak{g}$ be a real Lie algebra and $\mathcal{B}_{\mathfrak{g}}$ its Cartan-Killing form. It is known that the following three conditions are equivalent: \begin{itemize}
\item[$i)$] there exists a ${\rm ad}(\mathfrak{g})$-invariant Euclidean inner product on $\mathfrak{g}$;
\item[$ii)$] $\mathcal{B}_{\mathfrak{g}}$ is non-positive definite;
\item[$iii)$] $\mathfrak{g}$ is {\it compact}, {\it i.e.}, it is the Lie algebra of a compact Lie group.
\end{itemize}
Assume then that $\mathfrak{g}$ is compact. In that case, $\mathfrak{g}$ is {\it reductive}, {\it i.e.}, it splits as $\mathfrak{g} = \mathfrak{z} +\mathfrak{k}$. Here, $\mathfrak{z}$ denotes the center of $\mathfrak{g}$, where $\mathcal{B}_{\mathfrak{g}}$ vanishes, and $\mathfrak{k} \coloneqq [\mathfrak{g},\mathfrak{g}]$ is the semisimple part of $\mathfrak{g}$, where $\mathcal{B}_{\mathfrak{g}}$ is negative definite. We also recall that $\mathfrak{k}$ splits as a $(-\mathcal{B}_{\mathfrak{g}})$-orthogonal sum of simple ideals
\begin{equation} \label{eq:k-simplefactors}
\mathfrak{k} = \mathfrak{k}_1 + {\dots} + \mathfrak{k}_r
\end{equation}
and that, for any $1 \leq i \leq r$, the restriction of $\mathcal{B}_{\mathfrak{g}}$ to $\mathfrak{k}_i$ coincides with the Cartan-Killing form of $\mathfrak{k}_i$, that is, $\mathcal{B}_{\mathfrak{g}}|_{\mathfrak{k}_i \otimes \mathfrak{k}_i} = \mathcal{B}_{\mathfrak{k}_i}$. We call {\it rank of $\mathfrak{g}$} the dimension of a maximal abelian Lie subalgebra of $\mathfrak{g}$.

A {\it linear complex structure} on $\mathfrak{g}$ is an endomorphism $J \in {\rm End}({\mathfrak{g}})$ that satisfies the following two properties: \begin{itemize}
\item[$\bcdot$] $J \circ J = -{\rm Id}_{\mathfrak{g}}$;
\item[$\bcdot$] $[JX,JY] -[X,Y] -J[JX,Y] -J[X,JY] = 0$ for any $X, Y \in \mathfrak{g}$.
\end{itemize}
Samelson proved in \cite{MR0059287} that any compact Lie algebra of even rank admits a linear complex structure. Moreover, by \cite{MR994129}, any linear complex structure on a compact Lie algebra of even rank is obtained by means of Samelson's construction. More precisely, if $\mathfrak{g}$ is a compact Lie algebra of even rank, $g$ an ${\rm ad}(\mathfrak{g})$-invariant Euclidean inner product on $\mathfrak{g}$ and $J$ a $g$-orthogonal linear complex structure on $\mathfrak{g}$, then there exists a unique maximal abelian Lie subalgebra $\tilde{\mathfrak{t}} \subset \mathfrak{g}$ that verifies the following properties: \begin{itemize}
\item[$a)$] $\tilde{\mathfrak{t}}$ is $J$-invariant, {\it i.e.}, $J\tilde{\mathfrak{t}} = \tilde{\mathfrak{t}}$;
\item[$b)$] $J$ is ${\rm ad}(\tilde{\mathfrak{t}})$-invariant, {\it i.e.}, $[V,JX] = J[V,X]$ for any $V \in \tilde{\mathfrak{t}}$, $X \in \mathfrak{g}$.
\end{itemize}
Let now $\mathfrak{t}$ be the $g$-orthogonal complement of $\mathfrak{z}$ inside $\tilde{\mathfrak{t}}$ and denote by $\Delta$ the root system for the pair $(\mathfrak{k},\mathfrak{t})$. By means of the above conditions, it follows that $J$ also determines a choice of positive roots $\Delta^+ \subset \Delta$, that is unique up to the action of the Weyl group. Denote by $\mathfrak{k}_{\alpha}$ the complex $1$-dimensional $\alpha$-root space for any $\alpha \in \Delta$, that is,
$$
\mathfrak{k}_{\alpha} \coloneqq \{ E \in \mathfrak{k} \otimes_{\mathbb{R}} \mathbb{C} : [H,E] = \mathtt{i}\alpha(H)E \text{ for any $H \in \mathfrak{t}$}\} \, ,
$$
and set
$$
\mathfrak{r}_{\alpha} \coloneqq (\mathfrak{k}_{\alpha} +\mathfrak{k}_{-\alpha}) \cap \mathfrak{k} \, , \,\, \text{ for any $\alpha \in \Delta^+$} \, .
$$
Then, the Lie algebra $\mathfrak{g}$ splits $g$-orthogonally as
\begin{equation} \label{eq:split-g}
\mathfrak{g} = \mathfrak{z} + \mathfrak{t} + \sum_{\alpha \in \Delta^+} \mathfrak{r}_{\alpha}
\end{equation}
and $J$ preserves each {\it real root space} $\mathfrak{r}_{\alpha}$. Notice that $g$ leaves the decompositions \eqref{eq:k-simplefactors} orthogonal and that its restriction to any simple ideal $\mathfrak{k}_i$ is of the form $g|_{\mathfrak{k}_i \otimes \mathfrak{k}_i} = \kappa_i (-\mathcal{B}_{\mathfrak{k}_i})$ for some $\kappa_i >0$.

For every $\alpha \in \Delta$ we can fix a generator $E_{\alpha}$ for $\mathfrak{k}_{\alpha}$ as in \cite[Sections III.4 - III.5 - III.6]{MR1834454}. Therefore, we get a $(J,g)$-unitary basis $(X_{\alpha},Y_{\alpha})$ for the real root space $\mathfrak{r}_{\alpha}$ by setting
\begin{equation} \label{eq:basisalpha}
X_{\alpha} \coloneqq \tfrac{1}{\sqrt{2}}(E_{\alpha}-E_{-\alpha}) \,\, , \quad
Y_{\alpha} \coloneqq \tfrac{\mathtt{i}}{\sqrt{2}}(E_{\alpha}+E_{-\alpha}) \quad \text{for all $\alpha \in \Delta^+$} \,\, .
\end{equation}
By \cite[Sections III.4 - III.5 - III.6]{MR1834454} and \eqref{eq:basisalpha}, we get the following bracket relations for $\alpha \neq \beta$:
\begin{equation} \begin{aligned} \label{eq:structcoef1} 
[X_{\alpha},X_{\beta}] &= \eta_{\alpha, \beta}X_{\alpha +\beta} -\sgn(\alpha -\beta)\eta_{\alpha, -\beta}X_{|\alpha -\beta|} \,\, ,\\
[X_{\alpha},Y_{\beta}] &= \eta_{\alpha, \beta}Y_{\alpha +\beta} +\eta_{\alpha, -\beta}Y_{|\alpha -\beta|} \,\, \\
[Y_{\alpha},Y_{\beta}] &= -\eta_{\alpha, \beta}X_{\alpha +\beta} -\sgn(\alpha -\beta)\eta_{\alpha, -\beta}X_{|\alpha -\beta|} \,\, ,
\end{aligned} \end{equation}
where
$$
\sgn(\alpha -\beta) \coloneqq \left\{\!\!\begin{array}{cc}
+1 & \text{if $\alpha -\beta \in \Delta^+$} \\
-1 & \text{if $\beta -\alpha \in \Delta^+$} \\
0 & \text{otherwise}
\end{array}\right. \,\, , \quad
|\alpha -\beta| \coloneqq \sgn(\alpha -\beta)\,(\alpha -\beta) \,\, ,
$$
and the coefficients $\eta_{\alpha, \beta} \in \mathbb{R}$ satisfy
\begin{equation} \begin{gathered} \label{eq:structcoef2} 
\eta_{\alpha, \beta} = 0 \quad \text{if $\alpha +\beta \notin \Delta$} \,\, , \\
\eta_{\alpha, \beta} = -\eta_{\beta, \alpha} \,\, , \quad \eta_{-\alpha, -\beta} = -\eta_{\alpha, \beta} \,\, , \\
\eta_{\alpha, \beta} = \eta_{\beta, -(\alpha +\beta)} = \eta_{-(\alpha +\beta), \alpha} \,\, .
\end{gathered} \end{equation}
Moreover, for any $\alpha \in \Delta^+$, there exists a vector $H_{\alpha} \in \mathfrak{t}$ such that ${\rm span}_{\mathbb{R}}(H_{\alpha}, X_{\alpha}, Y_{\alpha})$ is a Lie subalgebra of $\mathfrak{k}$ isomorphic to $\mathfrak{su}(2)$ and $\mathfrak{t} = {\rm span}_{\mathbb{R}}(H_{\alpha} : \alpha \in \Delta^+)$ (see also \cite[Section 4.3]{MR3362465}). \smallskip

Finally, we recall that a {\it (real) Lie algebra bundle over $M$} is a triple $\mathcal{G} = (\mathcal{G},\mu,\mathfrak{g})$ given by a vector bundle $\pi:\mathcal{G} \to M$, a morphism of vector bundles $\mu: \Lambda^2\mathcal{G} \to \mathcal{G}$ covering the identity and a finite dimensional real Lie algebra $\mathfrak{g}$ satisfying the following properties: \begin{itemize}
\item[$i)$] for any $p \in M$, the linear map $\mu_p$ verifies the Jacobi identity;
\item[$ii)$] for any $p \in M$, there exists a neighborhood $\EuScript{U} \subset M$ of $p$ and a diffeomorphism $\phi: \pi^{-1}(\EuScript{U}) \to \EuScript{U} \times \mathfrak{g}$ trivializing $\mathcal{G}$ such that the restriction to each fiber $\phi_q : (\mathcal{G}_q,\mu_q) \to \mathfrak{g}$ is a Lie algebra isomorphism.
\end{itemize}
By \cite[Theorem 3]{MR0500955}, condition $(ii)$ is automatically satisfied if $(i)$ holds and all the fibers $(\mathcal{G}_q,\mu_q)$ are isomorphic as Lie algebras. A vector subbundle $\mathcal{H} \subset \mathcal{G}$ is called {\it Lie algebra subbundle of $\mathcal{G}$} if it is closed under the morphism $\mu$ and there exists a Lie subalgebra $\mathfrak{h} \subset \mathfrak{g}$ such that $(\mathcal{H},\mu|_{\Lambda^2\mathcal{H}},\mathfrak{h})$ is a Lie algebra bundle.

\medskip
\section{Construction of pluriclosed manifolds with parallel Bismut torsion}
\label{sect:if-part} \setcounter{equation} 0

In this section, we present a general construction of non-K{\"a}hler BKL manifolds in arbitrary dimension. By Theorem \ref{thm:MAIN-A}, it will turn out that any BKL manifold can be obtained as a product of K{\"a}hler manifolds and these building blocks. We begin by recalling the following definition.

\begin{definition}
A {\it Sasaki manifold} is a triple $(S,g,\xi)$ consisting of an odd dimensional Riemannian manifold $(S,g)$, together with a distinguished Killing vector field $\xi$ with unit norm, called {\it Reeb vector field}, such that
\begin{equation} \label{eq:Sasakidef}
\tfrac{1}{c^2}D^2_{X,Y}\xi = g(\xi,Y)X -g(X,Y)\xi \quad \text{for every $X,Y$,}
\end{equation}
for some $c >0$. The distribution $\xi^{\perp} \subset TS$ is called the {\it transverse Sasaki distribution}, while the tensor field $\frac{1}{c}D\xi$ is called the {\it canonical transverse complex structure}.
\end{definition}

We remark that this condition is usually referred to as {\it $c$-Sasaki}, with the Sasaki case corresponding to $c =1$ (see, {\it e.g.} \cite[Proposition 1.2]{MR2893680} and \cite{MR2382957}). This formulation is better suited to the BKL context since it is invariant under rescaling of the metric, unlike the classical Sasaki condition, and it is consistent with the previous literature on BKL manifolds (see \cite{MR4577328,MR4554474,ZZ4-5}). \smallskip

Let $S_{i}$ be a $3$-dimensional manifold endowed with a fixed Sasaki structure, with $1 \leq i \leq s$, and $\mathsf{K}$ a compact semisimple Lie group of ${\rm rank}(\mathsf{K}) = r$ endowed with a bi-invariant metric. Let $\ell \in \mathbb{N}$ be such that $2m \coloneqq \ell + s + r$ is even and define the product
\begin{equation} \label{eq:standardBKL}
M^{2n} \coloneqq \mathbb{R}^{\ell} \times \prod_{i=1}^{s} S_{i} \times \mathsf{K} \,\, ,
\end{equation}
where $\mathbb{R}^{\ell}$ is considered with the Euclidean structure. Denote by $g$ the product Riemannian metric on $M$ and define a complex structure $J$ as follows.

Fix a global $D$-parallel frame $(H_{1},{\dots},H_{\ell})$ for $\mathbb{R}^{\ell}$ and denote by $(H_{\ell +1},{\dots},H_{\ell +s})$ the Reeb vector fields associated to the given Sasaki manifolds.  Let $\mathsf{T}$ be a maximal torus of $\mathsf{K}$, together with a left-invariant orthonormal frame $(H_{\ell +s +1},{\dots},H_{2m})$ tangent to $\mathsf{T}$. Notice that, by construction,
\begin{equation} \label{eq:commHH}
[H_i, H_j] = 0 \quad \text{ for any $1 \leq i, j \leq 2m$} \,\, .
\end{equation}
Denote also by $\varpi: M \to \mathsf{K}/\mathsf{T}$ the composition of the canonical projection of $M$ onto its last factor and the projection $\mathsf{K} \to \mathsf{K}/\mathsf{T}$.

\begin{definition} \label{def:standard}
An almost complex structure $J$ on a Riemannian manifold $(M,g)$ as in \eqref{eq:standardBKL} is called {\it standard} if it satisfies the following properties. \begin{itemize}
\item[$i)$] The distribution spanned by the global orthonormal frame $(H_1,{\dots},H_{2m})$ on $M$ is $J$-invariant, and $J$ acts on these vector fields as
\begin{equation} \label{eq:JH}
JH_i = \sum_{j = 1}^{2m}\,H_j a_{ji} \,\, ,
\end{equation}
where $A = (a_{ji}) \in \mathsf{U}(m) \subset \mathsf{SO}(2m)$ is a constant matrix.
\item[$ii)$] On each transverse Sasaki distribution $H_{\ell +i}^{\perp} \cap T S_i$, with $1 \leq i \leq s$, $J$ coincides with the canonical transverse complex structure of $S_i$.
\item[$iii)$] $J$ projects via $\varpi$ onto a $\mathsf{K}$-invariant complex structure on the full flag manifold $\mathsf{K}/\mathsf{T}$.
\end{itemize}
\end{definition}

Notice that this construction generalizes both the complex structure on the product of two Sasaki manifolds defined in \cite{MR0163246, MR0630631} and Samelson's complex structures \cite{MR0059287}. Moreover, any almost complex structure defined accordingly to Definition \ref{def:standard} turns out to be integrable. Indeed, we have the following lemma.

\begin{lemma}
The triple $(M,g,J)$, where $(M,g)$ is given by \eqref{eq:standardBKL} and $J$ is standard according to Definition \ref{def:standard}, is a Hermitian manifold.
\end{lemma}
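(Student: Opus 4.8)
The plan is to verify, one at a time, that $J^2 = -{\rm Id}_{TM}$, that $g(J\,\cdot\,,J\,\cdot\,) = g$, and that the Nijenhuis tensor $N_J$ vanishes, using throughout the $g$-orthogonal splitting
$$
TM = \mathcal{V} \oplus \bigoplus_{i=1}^{s}\mathcal{D}_i \oplus \mathcal{Q}\,\,,
$$
where $\mathcal{V} \coloneqq {\rm span}(H_1,{\dots},H_{2m})$, each $\mathcal{D}_i \coloneqq H_{\ell+i}^{\perp}\cap TS_i$ is the transverse Sasaki distribution of $S_i$, and $\mathcal{Q}$ is the left-invariant distribution on the $\mathsf{K}$-factor corresponding to $\mathfrak{m} \coloneqq \sum_{\alpha\in\Delta^+}\mathfrak{r}_\alpha$. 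By Definition \ref{def:standard}, $J$ preserves this splitting and restricts to the constant matrix $A$ on $\mathcal{V}$, to the transverse complex structure $\phi_i$ on $\mathcal{D}_i$, and to the $\varpi$-pullback of the invariant complex structure of $\mathsf{K}/\mathsf{T}$ on $\mathcal{Q}$ (note that $\ker{\rm d}\varpi = \mathcal{V}\oplus\bigoplus_i\mathcal{D}_i$ is $J$-invariant, so the last clause makes sense). Since each of these is an orthogonal complex structure of the corresponding summand — for $A$ one uses $A^2 = -{\rm Id}$ and $A\in\mathsf{SO}(2m)$ — and the summands are mutually $g$-orthogonal and $J$-invariant, the first two assertions are immediate.

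For integrability I would evaluate $N_J$, which is tensorial and skew-symmetric, on pairs of fields from the adapted frame: the global frame $(H_j)_{j=1}^{2m}$ of $\mathcal{V}$, a local frame $(\xi_i,\phi_i\xi_i)$ of each $\mathcal{D}_i$, and the $(J,g)$-unitary left-invariant frame $(X_\alpha,Y_\alpha)_{\alpha\in\Delta^+}$ of $\mathcal{Q}$ from \eqref{eq:structcoef1}. The inputs are: $[H_j,H_k]=0$ for all $j,k$ by \eqref{eq:commHH}; vector fields tangent to distinct factors of \eqref{eq:standardBKL} commute; the Reeb field $\zeta_i\coloneqq H_{\ell+i}$ satisfies $[\zeta_i,\Gamma(\mathcal{D}_i)]\subseteq\Gamma(\mathcal{D}_i)$ and $\mathcal{L}_{\zeta_i}\phi_i = 0$ because $S_i$ is Sasaki; and, writing $P\colon\mathcal{V}\to\mathfrak{t}$ for the $g$-orthogonal projection onto $\mathfrak{t} = {\rm span}(H_{\ell+s+1},{\dots},H_{2m})$, one has $[W,Z] = {\rm ad}(PW)Z$ for every $W\in\mathcal{V}$ and every left-invariant $Z\in\Gamma(\mathcal{Q})$, with ${\rm ad}(PW)$ commuting with $J$ on $\mathcal{Q}$ since the complex structure of $\mathsf{K}/\mathsf{T}$ is $\mathsf{K}$-invariant.

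Granting these, I expect the case analysis to run as follows. If the two arguments lie in a common two-dimensional piece (one $\mathcal{D}_i$, or one $\mathfrak{r}_\alpha$) everything collapses to the trivial identity $N_J(\xi,J\xi) = -[J\xi,\xi]-[\xi,J\xi] = 0$; if they lie in two distinct factors of \eqref{eq:standardBKL}, or both lie in $\mathcal{V}$, all brackets occurring in $N_J$ vanish; the mixed cases $\mathcal{V}$--$\mathcal{D}_i$ and $\mathcal{V}$--$\mathcal{Q}$ reduce to cancellations driven, respectively, by $\mathcal{L}_{\zeta_i}\phi_i = 0$ together with $[\zeta_i,\Gamma(\mathcal{D}_i)]\subseteq\Gamma(\mathcal{D}_i)$, and by the commutation of ${\rm ad}(PW)$ with $J$. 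The case I expect to be the genuine obstacle is $\mathcal{Q}$--$\mathcal{Q}$ with the two arguments in \emph{distinct} root spaces: the $\mathfrak{m}$-component of $N_J(X,Y)$ vanishes by the classical integrability of the $\mathsf{K}$-invariant complex structure of the full flag manifold $\mathsf{K}/\mathsf{T}$ (equivalently, $\sum_{\alpha\in\Delta^+}\mathfrak{k}_\alpha$ spans a subalgebra of $\mathfrak{k}\otimes_{\mathbb{R}}\mathbb{C}$), but one must also kill the component of $N_J(X,Y)$ along $\mathcal{V}$, which is a priori nonzero because $A$ mixes $\mathfrak{t}$ with the Euclidean and Reeb directions.

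To dispose of this last point I would use, for $X,Y\in\mathfrak{m}$ and $H\in\mathfrak{t}$, the bi-invariance of the metric on $\mathfrak{k}$, the $g$-orthogonality of $J$ on $\mathfrak{m}$, and the identity $J[Y,H]=[JY,H]$ (a consequence of ${\rm ad}(\mathfrak{t})$ commuting with $J$) to obtain $g([JX,JY],H)=g([X,Y],H)$ and $g([JX,Y],H)=-g([X,JY],H)$, i.e. $[JX,JY]_{\mathfrak{t}}=[X,Y]_{\mathfrak{t}}$ and $[JX,Y]_{\mathfrak{t}}=-[X,JY]_{\mathfrak{t}}$. Substituting into $N_J(X,Y)=[JX,JY]-[X,Y]-J[JX,Y]-J[X,JY]$, the $\mathfrak{t}$-contributions cancel — the first difference gives $[JX,JY]_{\mathfrak{t}}-[X,Y]_{\mathfrak{t}}=0$, and the two $J$-terms together contribute $-A\big([JX,Y]_{\mathfrak{t}}+[X,JY]_{\mathfrak{t}}\big)=0$ — leaving only the $\mathfrak{m}$-component, which has already been shown to vanish. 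Collecting the cases gives $N_J\equiv 0$, hence $J$ is integrable by Newlander--Nirenberg and $(M,g,J)$ is a Hermitian manifold.
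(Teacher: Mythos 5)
Your proof is correct and, at bottom, follows the same route as the paper -- verifying that the Nijenhuis tensor vanishes on an adapted frame -- but it is considerably more complete: the paper's own proof only records that $N_J(H_i,H_j)=0$ for the frame of $\mathcal{V}$, using the constancy of $A$ in \eqref{eq:JH} together with \eqref{eq:commHH}, and leaves all the remaining cases implicit. Your case analysis fills these in correctly, and in particular you isolate the one genuinely non-routine point, namely the $\mathcal{Q}$--$\mathcal{Q}$ case with arguments in distinct root spaces, where the component of $N_J$ along $\mathcal{V}$ is a priori nonzero because $J$ mixes $\mathfrak{t}$ with the Euclidean and Reeb directions; the identities $[JX,JY]_{\mathfrak{t}}=[X,Y]_{\mathfrak{t}}$ and $[JX,Y]_{\mathfrak{t}}=-[X,JY]_{\mathfrak{t}}$, obtained from bi-invariance of the metric and the ${\rm ad}(\mathfrak{t})$-invariance of $J$, are exactly the right mechanism to kill it, and the remaining $\mathfrak{m}$-component vanishes by integrability of the invariant structure on $\mathsf{K}/\mathsf{T}$ as you say. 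The mixed $\mathcal{V}$--$\mathcal{D}_i$ and $\mathcal{V}$--$\mathcal{Q}$ cancellations via $\mathcal{L}_{\zeta_i}\phi_i=0$ and the commutation of ${\rm ad}(PW)$ with $J$ also check out, so your argument is a valid (and more self-contained) proof of the lemma.
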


\begin{proof}
The fact that $J$ is a $g$-orthogonal almost complex structure follows directly from the construction. Moreover, by properties $ii)$ and $iii)$ in Definition \ref{def:standard}, the only non-trivial components of the Nijenhuis tensor $N_J$ are those given by $N_J(H_i,H_j)$ for $1 \leq i, j \leq 2m$. However, since the coefficients $a_{ij}$ in \eqref{eq:JH} are constant, they vanish by \eqref{eq:commHH}.
\end{proof}

We now prove that these Hermitian manifolds are BKL. Let us consider now the Lie algebra $\mathfrak{k} \coloneqq \Lie(\mathsf{K})$ and define the left-invariant $3$-fom $B \in \Lambda^3\mathfrak{k}^*$ by
\begin{equation} \label{eq:defB}
B(V,W,Z) \coloneqq -g([V,W],Z) \,\, .
\end{equation}

\begin{proposition} \label{prop:T}
Let $(M,g)$ be a Riemannian manifold as in \eqref{eq:standardBKL} endowed with a standard complex structure $J$. Then, the torsion $T$ of its Bismut connection is given by
\begin{equation} \label{eq:Tsplit}
T = \sum_{i=1}^{s} H_{\ell +i}^* \wedge {\rm d}H_{\ell +i}^* + B \,\, ,
\end{equation}
where $H_{\ell +i}^*$ denotes the metric dual $1$-form of $H_{\ell +i}$.
\end{proposition}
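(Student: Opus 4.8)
The statement \eqref{eq:Tsplit} is to be understood as an identity of $3$-forms on $M$, identifying $T$ with $g(T(\,\cdot\,,\,\cdot\,),\,\cdot\,)$; here $B$ denotes the left-invariant $3$-form \eqref{eq:defB} on $\mathsf{K}$ pulled back to $M$, and $H_{\ell+i}^*\wedge{\rm d}H_{\ell+i}^*$ is supported on the $S_i$-directions. The plan is to compute this $3$-form directly from $g(T(\,\cdot\,,\,\cdot\,),\,\cdot\,) = -{\rm d}^c\omega = {\rm d}\omega(J\,\cdot\,,J\,\cdot\,,J\,\cdot\,) = J^{-1}{\rm d}\omega$, using \eqref{eq:torsionBismut} and the formula for $g(T(X,Y),Z)$ recalled above. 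The first step is to fix the $g$-orthogonal, $J$-invariant splitting $TM = \mathcal{V}\oplus\bigoplus_{i=1}^{s}\mathcal{H}_i\oplus\mathfrak{m}$, where $\mathcal{V} = {\rm span}(H_1,\dots,H_{2m})$, $\mathcal{H}_i = H_{\ell+i}^{\perp}\cap TS_i$ is the contact distribution of $S_i$, and $\mathfrak{m}\subset T\mathsf{K}$ is the left-invariant distribution spanned by the real root spaces. By Definition \ref{def:standard}, $J$ preserves each summand, so $\omega$ is block-diagonal, $\omega = \omega_{\mathcal{V}}+\sum_i\omega_i+\omega_{\mathfrak{m}}$, where $\omega_{\mathcal{V}}$ has constant coefficients in the frame $(H_a)$ by $(i)$, $\omega_i = \tfrac12\,{\rm d}H_{\ell+i}^*$ is the transverse K\"ahler form of $S_i$ by $(ii)$, and $\omega_{\mathfrak{m}} = \varpi^*\omega_{\mathsf{K}/\mathsf{T}}$ is the pull-back of the invariant Hermitian form on the flag by $(iii)$.

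Differentiating, ${\rm d}\omega_i = 0$ (a multiple of ${\rm d}{\rm d}H_{\ell+i}^*$); using \eqref{eq:commHH} one gets ${\rm d}\omega_{\mathcal{V}} = \sum_{a,b}a_{ba}\,{\rm d}H_a^*\wedge H_b^*$, where ${\rm d}H_a^* = 0$ for $1\le a\le\ell$, ${\rm d}H_{\ell+i}^* = 2\omega_i$ is supported on $\Lambda^2\mathcal{H}_i^*$, and ${\rm d}H_a^*(V,W) = -g(H_a,[V,W])$ is supported on $\Lambda^2\mathfrak{m}^*$ for $\ell+s<a\le 2m$; and ${\rm d}\omega_{\mathfrak{m}}$, being basic for the $\mathsf{T}$-action, lives on $\Lambda^3\mathfrak{m}^*$ and is governed there by the $\mathfrak{m}$-component of the Lie bracket of $\mathfrak{k}$. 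Hence ${\rm d}\omega$ has nonzero parts only with two legs in some $\mathcal{H}_i$ and one in $\mathcal{V}$, two legs in $\mathfrak{m}$ and one in $\mathcal{V}$, or all three legs in $\mathfrak{m}$; since $J$ preserves $\mathcal{V}$, each $\mathcal{H}_i$ and $\mathfrak{m}$, so does $J^{-1}{\rm d}\omega$, and I would treat the three parts separately. In the first, $J^{-1}\omega_i = \omega_i$ and $\sum_b a_{b,\ell+i}(J^{-1}H_b^*) = H_{\ell+i}^*$ by the orthogonality $A^{\top}A = {\rm Id}$, which returns exactly $2\,\omega_i\wedge H_{\ell+i}^* = H_{\ell+i}^*\wedge{\rm d}H_{\ell+i}^*$. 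In the second, the integrability of $J$ forces $[JV,JW]_{\mathfrak{t}} = [V,W]_{\mathfrak{t}}$ for $V,W\in\mathfrak{m}$, hence $J^{-1}{\rm d}H_a^* = {\rm d}H_a^*$ for $\ell+s<a\le 2m$, and $A^{\top}A = {\rm Id}$ again collapses $\sum_b a_{ba}(J^{-1}H_b^*)$ to $H_a^*$; the Euclidean and Reeb legs drop out and one is left with $\sum_a{\rm d}H_a^*\wedge H_a^*$, which is precisely the part of $B$ with two legs in $\mathfrak{m}$ and one in $\mathfrak{t}$. In the third, since $[\mathfrak{m}^{1,0},\mathfrak{m}^{1,0}]\subseteq\mathfrak{m}^{1,0}$ and $\mathfrak{m}^{1,0}$ is $g$-isotropic, the $3$-form $c\coloneqq g([\,\cdot\,,\,\cdot\,]^{\mathfrak{m}},\,\cdot\,)$ on the flag has no $(3,0)$- or $(0,3)$-component, and for such a form one has the identity $c(JV,JW,Z)+c(JV,W,JZ)+c(V,JW,JZ) = c(V,W,Z)$, which is exactly what is needed to recognize $J^{-1}{\rm d}\omega_{\mathfrak{m}}$ as the part of $B$ with all legs in $\mathfrak{m}$. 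Adding the three contributions gives \eqref{eq:Tsplit}.

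The heart of the argument, and the step I expect to be the main obstacle, is this last one: tracking how $J^{-1}$ redistributes a $\mathcal{V}$-leg over the Euclidean, Reeb and torus directions, and checking that every off-diagonal contribution — one mixing distinct de Rham factors, or one attaching an $\mathbb{R}^{\ell}$- or Reeb-leg to an $\mathfrak{m}$-form — collapses. This is where the orthogonality of $A$ together with the two algebraic consequences of the Newlander--Nirenberg condition, namely $[JV,JW]_{\mathfrak{t}} = [V,W]_{\mathfrak{t}}$ and the vanishing of the $(3,0)$-part of $c$, do all the work.
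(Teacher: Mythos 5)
Your proposal is correct and follows essentially the same route as the paper: decompose $\omega$ into the $\mathcal{V}$-block, the transverse Sasaki blocks and the root-space block, compute ${\rm d}$ of each piece, apply $J^{-1}$ and collect, with the orthogonality of $A$ (together with $A^2=-{\rm Id}$) collapsing the redistributed $\mathcal{V}$-legs exactly as in the paper's change of frame via $Q$. The only organizational difference is in the $\mathsf{K}$-contribution: the paper keeps the torus and root-space terms together and closes with a single application of $N_J=0$, whereas you split $B$ into its one-$\mathfrak{t}$-leg and all-$\mathfrak{m}$ components and match them separately via $[JV,JW]_{\mathfrak{t}}=[V,W]_{\mathfrak{t}}$ and the vanishing of the $(3,0)+(0,3)$ part of $g([\,\cdot\,,\,\cdot\,]^{\mathfrak{m}},\,\cdot\,)$ --- both of which are correct consequences of condition $(iii)$ of Definition \ref{def:standard}.
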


\begin{proof}
We begin by computing the fundamental $2$-form $\omega = g(J\cdot\,, \cdot)$ with respect to an appropriate frame. Since the vector fields $(H_{1},{\dots},H_{\ell})$ on $\mathbb{R}^{\ell}$ are $D$-parallel, we have
\begin{equation} \label{eq:diffcofRell}
{\rm d}H_i^* = 0 \quad \text{ for $1 \leq i \leq \ell$} \,\, .
\end{equation}
Let us fix now a local unitary frame $(E_{\ell +i}, F_{\ell +i})$ for $H_{\ell +i}^{\perp} \cap T S_i$, for any $1 \leq i \leq s$. A straightforward computation shows then that
\begin{equation} \label{eq:DHEF} \begin{aligned}
DH_{\ell +i} &= c_i \big( E_{\ell +i}^* \otimes F_{\ell +i} - F_{\ell +i}^* \otimes E_{\ell +i} \big) \,\, , \\
DE_{\ell +i} &= c_i F_{\ell +i}^* \otimes H_{\ell +i} -\vartheta_{\ell +i} \otimes F_{\ell +i} \,\, , \\
DF_{\ell +i} &= -c_i E_{\ell +i}^* \otimes H_{\ell +i} +\vartheta_{\ell +i} \otimes E_{\ell +i} \,\, ,
\end{aligned} \end{equation}
where $\vartheta_{\ell +i}$ is a local $1$-form on $S_i$ and $c_i >0$. Therefore, the differentials of the associated coframe $(H_{\ell +i}^*,E_{\ell +i}^*,F_{\ell +i}^*)$ is given by
\begin{equation} \label{eq:diffcofSas}
\begin{aligned}
{\rm d}H_{\ell +i}^* &= 2c_i E_{\ell +i}^* \wedge F_{\ell +i}^* \,\, , \\
{\rm d}E_{\ell +i}^* &= c_i F_{\ell +i}^* \wedge H_{\ell +i}^* -\vartheta_{\ell +i} \wedge F_{\ell +i}^* \,\, ,\\
{\rm d}F_{\ell +i}^* &= -c_i E_{\ell +i}^* \wedge H_{\ell +i}^* +\vartheta_{\ell +i} \wedge E_{\ell +i}^* \,\, .
\end{aligned} \end{equation}
We observe also that, for any $V \in \mathfrak{k}$, the differential ${\rm d}V^*$ of its metric dual $V^*$ with respect to $g$ verifies
\begin{equation} \label{eq:dVB}
{\rm d}V^* = B(V,\cdot\,,\cdot\,) \,\, .
\end{equation}
By assumption (iii) in Definition \ref{def:standard}, there exists a choice of positive roots $\Delta^+$ for $\mathfrak{k}$, with respect to $\mathfrak{t} \coloneqq \Lie(\mathsf{T})$, such that $J$ restricts to a linear complex structure on each real root space $\mathfrak{r}_{\alpha}$ of $\mathfrak{k}$. Consequently, we can  fix, for each $\alpha \in \Delta^+$, a frame of unitary left-invariant vector fields $(X_{\alpha}, Y_{\alpha})$ for $\mathfrak{r}_{\alpha}$ as in Section \ref{sect:prelLie}. A direct computation based on \eqref{eq:dVB} shows that
\begin{equation} \label{eq:diffcofKh}
{\rm d}H_{\ell +s +i}^* = -\sum_{\alpha \in \Delta^+} \alpha(H_{\ell +s +i})\, X_{\alpha}^* \wedge Y_{\alpha}^* \quad \text{ for $1 \leq i \leq r$} \,\, .
\end{equation}
For the sake of notation, we also set
$$
\alpha(H_i) = 0 \quad \text{ for any $1 \leq i \leq \ell +s$, for any $\alpha \in \Delta^+$} \,\, .
$$

Let now $Q = (q_{ji}) \in \mathsf{SO}(2m)$ be such that
$$
A = Q \left(\!\!\tiny{\begin{array}{ccccc}
0 \!\!\!&\!\!\! -1 \!\!\!&\!\!\! \!\!\!&\!\!\! \!\!\!&\!\!\! \\
1 \!\!\!&\!\!\! 0 \!\!\!&\!\!\! \!\!\!&\!\!\! \!\!\!&\!\!\! \\
 \!\!\!&\!\!\! \!\!\!&\!\!\! {\ddots} \!\!\!&\!\!\! \!\!\!&\!\!\! \\
 \!\!\!&\!\!\! \!\!\!&\!\!\! \!\!\!&\!\!\! 0 \!\!\!&\!\!\! -1 \\
 \!\!\!&\!\!\! \!\!\!&\!\!\! \!\!\!&\!\!\! 1 \!\!\!&\!\!\! 0 \\
\end{array}}\!\!\right) Q^t \,\, ,
$$
where $A = (a_{ji}) \in \mathsf{U}(m)$ is as in \eqref{eq:JH}, and denote by $(Z_1,{\dots},Z_{2m})$ the new frame defined by
\begin{equation} \label{def:Z}
Z_i \coloneqq \sum_{j = 1}^{2m}\,H_j q_{ji} \,\, ,
\end{equation}
so that $g(Z_i,Z_j) = \delta_{ij}$ and $JZ_{2i-1} = Z_{2i}$. Therefore, $\omega$ takes the form
\begin{equation} \label{eq:dcw0}
\omega = \sum_{i=1}^m Z_{2i-1}^* \wedge Z_{2i}^* +\sum_{i=1}^s E_{\ell +i}^* \wedge F_{\ell +i}^* +\sum_{\alpha \in \Delta^+} X_{\alpha}^* \wedge Y_{\alpha}^* \,\, .
\end{equation}
Let us compute now the $3$-form $T = -{\rm d}^c\omega$. By \eqref{eq:diffcofSas}, it is immediate to see that
$$
{\rm d}(E_{\ell +i}^* \wedge F_{\ell +i}^*) = 0 \quad \text{for any $1 \leq i \leq s$}
$$
and so
\begin{equation} \label{eq:dcw1}
-{\rm d}^c\left(\sum_{i=1}^m E_{\ell +i}^*\wedge F_{\ell +i}^*\right) = 0 \,\, .
\end{equation}
Moreover, by \eqref{eq:diffcofKh} and \eqref{def:Z}, one has
$$\begin{aligned}
J^{-1}{\rm d}Z_i^* &= J^{-1}\left(\sum_{j=1}^{2m} {\rm d}H_j^* q_{ji}\right) \\
&=J^{-1}\left(\sum_{j=\ell +1}^{\ell +s} 2c_jE_j^*\wedge F_j^* q_{ji}
-\sum_{j=\ell +s +1}^{2m} \sum_{\alpha \in \Delta^+} \alpha(H_j)q_{ji} X_{\alpha}^* \wedge Y_{\alpha}^*\right) \\
&= \sum_{j=\ell +1}^{\ell +s} 2c_jE_j^*\wedge F_j^* q_{ji} -\sum_{j=\ell +s +1}^{2m} \sum_{\alpha \in \Delta^+} \alpha(H_j)q_{ji} X_{\alpha}^* \wedge Y_{\alpha}^*
\end{aligned}$$
and so
\begin{align}
-{\rm d}^c\left(\sum_{i=1}^m Z_{2i-1}^*\wedge Z_{2i}^*\right) &= \sum_{i=1}^m \big(-{\rm d}^cZ_{2i-1}^*\wedge Z_{2i}^* +Z_{2i-1}^*\wedge {\rm d}^cZ_{2i}^* \big) \nonumber \\
&= \sum_{i=1}^{2m} Z_{i}^* \wedge J^{-1}{\rm d}Z_{i}^* \nonumber \\
&= \sum_{i=1}^{2m} Z_{i}^* \wedge \left(\sum_{j=\ell +1}^{\ell +s} 2c_jE_j^*\wedge F_j^* q_{ji}
-\sum_{j=\ell +s +1}^{2m} \sum_{\alpha \in \Delta^+} \alpha(H_j)q_{ji} X_{\alpha}^* \wedge Y_{\alpha}^*\right) \nonumber \\
&= \sum_{j=\ell +1}^{\ell +s} 2c_j H_j^* \wedge E_j^*\wedge F_j^*
-\sum_{j=\ell +s +1}^{2m} \sum_{\alpha \in \Delta^+} \alpha(H_j) H_j^* \wedge X_{\alpha}^* \wedge Y_{\alpha}^* \nonumber \\
&= \sum_{i=1}^{s} H_{\ell +i}^* \wedge {\rm d}H_{\ell +i}^* +\sum_{i=1}^{r} H_{\ell +s +i}^* \wedge {\rm d}H_{\ell +s +i}^* \,\, . \label{eq:dcw2}
\end{align}
For the last term of $T$, we observe that
$$
{\rm d}^cX_\alpha^* = -J^{-1}{\rm d}Y_\alpha^* \,\, , \quad {\rm d}^cY_\alpha^* = J^{-1}{\rm d}X_\alpha^*
$$
and so
\begin{equation} \label{eq:dcw3}
-{\rm d}^c\left(\sum_{\alpha \in \Delta^+} X_{\alpha}^*\wedge Y_{\alpha}^*\right) = \sum_{\alpha \in \Delta^+} X_{\alpha}^* \wedge J^{-1}{\rm d}X_{\alpha}^* +Y_{\alpha}^* \wedge J^{-1}{\rm d}Y_{\alpha}^* \,\, .
\end{equation}
Is it then easy to see that
\begin{equation} \label{eq:dcw4}
E_{\ell +i} \,\lrcorner\, {\rm d}^c\left(\sum_{\alpha \in \Delta^+} X_{\alpha}^*\wedge Y_{\alpha}^*\right) = F_{\ell +i} \,\lrcorner\, {\rm d}^c\left(\sum_{\alpha \in \Delta^+} X_{\alpha}^*\wedge Y_{\alpha}^*\right) = 0 \,\, .
\end{equation}
Moreover, a straightforward computation shows that, for any $1 \leq i \leq 2m$,
$$
H_i \,\lrcorner\, J^{-1}{\rm d}X_{\alpha}^* = \alpha(H_i)\, X_{\alpha}^* \,\, , \quad H_i \,\lrcorner\, J^{-1}{\rm d}Y_{\alpha}^* = \alpha(H_i)\, Y_{\alpha}^*
$$
and so, by \eqref{eq:dcw3}, we get
\begin{align}
H_i \,\lrcorner\, {\rm d}^c\left(\sum_{\alpha \in \Delta^+} X_{\alpha}^*\wedge Y_{\alpha}^*\right) &= -\sum_{\alpha \in \Delta^+} H_i \,\lrcorner\, \Big(X_{\alpha}^* \wedge J^{-1}{\rm d}X_{\alpha}^*\Big) +H_i \,\lrcorner\, \Big(Y_{\alpha}^* \wedge J^{-1}{\rm d}Y_{\alpha}^*\Big) \nonumber \\
&= \sum_{\alpha \in \Delta^+} X_{\alpha}^* \wedge \Big(H_i \,\lrcorner\, J^{-1}{\rm d}X_{\alpha}^*\Big) + Y_{\alpha}^* \wedge \Big(H_i \,\lrcorner\, J^{-1}{\rm d}Y_{\alpha}^*\Big) \nonumber \\
&= \sum_{\alpha \in \Delta^+} \alpha(H_i) \big( X_{\alpha}^* \wedge X_{\alpha}^*+ Y_{\alpha}^* \wedge Y_{\alpha}^* \big) \nonumber \\
&= 0 \,\, . \label{eq:dcw5}
\end{align}
Finally we observe that, by \eqref{eq:diffcofKh}, it follows that
$$
{\rm d}H_{\ell +s +i}^* = J^{-1}{\rm d}H_{\ell +s +i}^* \quad \text{for any $1 \leq i \leq r$}
$$
and so
\begin{align}
&\left(\sum_{i=1}^{r} H_{\ell +s +i}^* \wedge {\rm d}H_{\ell +s +i}^* +\sum_{\alpha \in \Delta^+} X_{\alpha}^* \wedge J^{-1}{\rm d}X_{\alpha}^* +Y_{\alpha}^* \wedge J^{-1}{\rm d}Y_{\alpha}^*\right)(U,V,W) = \nonumber \\
&\quad = \left(\sum_{i=1}^{r} H_{\ell +s +i}^* \wedge J^{-1}{\rm d}H_{\ell +s +i}^* +\sum_{\alpha \in \Delta^+} X_{\alpha}^* \wedge J^{-1}{\rm d}X_{\alpha}^* +Y_{\alpha}^* \wedge J^{-1}{\rm d}Y_{\alpha}^*\right)(U,V,W) \nonumber \\
&\quad = \sum_{i=1}^r \Big(
g(H_{\ell +s +i},U) {\rm d}H_{\ell +s +i}^* (JV,JW)
+g(H_{\ell +s +i},V) {\rm d}H_{\ell +s +i}^* (JW,JU) \nonumber \\
&\quad\quad +g(H_{\ell +s +i},W) {\rm d}H_{\ell +s +i}^* (JU,JV) \Big) \nonumber \\
&\quad\quad +\sum_{\alpha \in \Delta^+} \Big(
g(X_{\alpha},U) {\rm d}X_{\alpha}^* (JV,JW)
+g(X_{\alpha},V) {\rm d}X_{\alpha}^* (JW,JU)
+g(X_{\alpha},W) {\rm d}X_{\alpha}^* (JU,JV) \nonumber \\
&\quad\quad +g(Y_{\alpha},U) {\rm d}Y_{\alpha}^* (JV,JW)
+g(Y_{\alpha},V) {\rm d}Y_{\alpha}^* (JW,JU)
+g(Y_{\alpha},W) {\rm d}Y_{\alpha}^* (JU,JV) \Big) \nonumber \\
&\quad = \sum_{i=1}^r \Big(
g(H_{\ell +s +i},U) B(H_{\ell +s +i},JV,JW)
+g(H_{\ell +s +i},V) B(H_{\ell +s +i},JW,JU) \nonumber \\
&\quad\quad +g(H_{\ell +s +i},W) B(H_{\ell +s +i},JU,JV) \Big) \nonumber \\
&\quad\quad +\sum_{\alpha \in \Delta^+} \Big(
g(X_{\alpha},U) B(X_{\alpha},JV,JW)
+g(X_{\alpha},V) B(X_{\alpha},JW,JU)
+g(X_{\alpha},W) B(X_{\alpha},JU,JV) \nonumber \\
&\quad\quad +g(Y_{\alpha},U) B(Y_{\alpha},JV,JW)
+g(Y_{\alpha},V) B(Y_{\alpha},JW,JU)
+g(Y_{\alpha},W) B(Y_{\alpha},JU,JV) \Big) \nonumber \\
&\quad= B(U,JV,JW) +B(V,JW,JU) +B(W,JU,JV) \nonumber \\
&\quad= -g(U,[JV,JW]) +g(U,J[V,JW]) +g(U,J[JV,W]) \nonumber \\
&\quad= -g(U,[V,W]) \nonumber \\
&\quad=B(U,V,W) \,\, . \label{eq:dcw6}
\end{align}
Therefore, \eqref{eq:Tsplit} follows from \eqref{eq:dcw0}, \eqref{eq:dcw1}, \eqref{eq:dcw2}, \eqref{eq:dcw3}, \eqref{eq:dcw4}, \eqref{eq:dcw5}, and \eqref{eq:dcw6}.
\end{proof}

We are ready now to prove the main result of this section.

\begin{theorem} \label{thm:standard-BKL}
Any Riemannian manifold $(M,g)$ as in \eqref{eq:standardBKL} endowed with a standard complex structure $J$ is BKL.
\end{theorem}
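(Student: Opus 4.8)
The plan is to verify directly the two conditions defining the BKL property, namely ${\rm d}{\rm d}^c\omega = 0$ and $\nabla {\rm d}^c\omega = 0$, equivalently ${\rm d}T = 0$ and $\nabla T = 0$ for the torsion $3$-form $T = -{\rm d}^c\omega$, starting from the explicit expression $T = \sum_{i=1}^{s} H_{\ell+i}^* \wedge {\rm d}H_{\ell+i}^* + B$ obtained in Proposition \ref{prop:T}.

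For the pluriclosed condition I would differentiate $T$ summand by summand. One has ${\rm d}\big(H_{\ell+i}^* \wedge {\rm d}H_{\ell+i}^*\big) = {\rm d}H_{\ell+i}^* \wedge {\rm d}H_{\ell+i}^*$, and by \eqref{eq:diffcofSas} the form ${\rm d}H_{\ell+i}^* = 2\,E_{\ell+i}^* \wedge F_{\ell+i}^*$ is a $2$-form supported on the $3$-dimensional factor $S_i$, so its square vanishes; moreover ${\rm d}B = 0$ because $B$ is, up to sign, the Cartan $3$-form of $\mathsf{K}$, whose closedness is the usual consequence of the Jacobi identity and the ${\rm ad}(\mathfrak{k})$-invariance of $g$. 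Hence ${\rm d}T = 0$ and $g$ is pluriclosed.

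For the parallelism, the crucial observation is that, even though the standard complex structure $J$ need not respect the product splitting of $M$, the torsion $3$-form $T$ does: in \eqref{eq:Tsplit} each $H_{\ell+i}^* \wedge {\rm d}H_{\ell+i}^*$ is pulled back from $S_i$, $B$ is pulled back from $\mathsf{K}$, and there is no $\mathbb{R}^\ell$-contribution. Since the Levi-Civita connection $D$ of the product metric also splits and $\nabla_XY = D_XY + \tfrac12 T(X,Y)$, with $T(X,Y) = 0$ whenever $X$ and $Y$ are tangent to different factors, the Bismut connection splits as a direct sum $\nabla = D^{\mathbb{R}^\ell} \oplus \bigoplus_{i=1}^s \nabla^{S_i} \oplus \nabla^{\mathsf{K}}$ of metric connections, where $\nabla^{S_i}$ has skew torsion $H_{\ell+i}^* \wedge {\rm d}H_{\ell+i}^*$ and $\nabla^{\mathsf{K}}$ has skew torsion $B$. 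It then suffices to show that each factor connection parallelizes its own torsion. On $\mathbb{R}^\ell$ this is vacuous. On $S_i$, by \eqref{eq:diffcofSas} that torsion equals $2\,H_{\ell+i}^* \wedge E_{\ell+i}^* \wedge F_{\ell+i}^*$, a constant multiple of the Riemannian volume form of $S_i$, hence parallel for every metric connection. On $\mathsf{K}$, using $D_XY = \tfrac12[X,Y]$ for the bi-invariant metric and $T(X,Y) = -[X,Y]$ (from \eqref{eq:defB} and the $B$-term of \eqref{eq:Tsplit}), one gets $\nabla^{\mathsf{K}}_XY = 0$ on left-invariant vector fields; such a connection is flat and parallelizes every left-invariant tensor, in particular $B$. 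Thus $\nabla T = 0$, and together with ${\rm d}T = 0$ this yields \eqref{eq:BKL}, i.e.\ $(M,g,J)$ is BKL.

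The only delicate step I anticipate is the splitting of $\nabla$, which has to be deduced from the simultaneous splitting of $D$ and of the $3$-form $T$ furnished by Proposition \ref{prop:T}, and \emph{not} from any product structure of $J$ (which typically fails); everything else is immediate from the structure equations \eqref{eq:diffcofSas} and standard facts about bi-invariant metrics.
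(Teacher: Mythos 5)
Your proposal is correct and follows essentially the same route as the paper: both arguments rest on the splitting of the torsion three-form furnished by Proposition \ref{prop:T}, deduce that the Bismut connection restricts to a metric connection with skew-symmetric torsion on each factor, and then verify ${\rm d}T=0$ and $\nabla T=0$ factor by factor. The only differences are in two of the factor-wise verifications, where your arguments are in fact slightly cleaner: on $S_i$ you note that the restricted torsion is a constant multiple of the volume form, hence parallel for any metric connection, where the paper instead computes $\nabla H_{\ell+i}$, $\nabla E_{\ell+i}$, $\nabla F_{\ell+i}$ explicitly from \eqref{eq:DHEF} and \eqref{eq:diffcofSas}; and on $\mathsf{K}$ you identify the restricted connection with the flat left-invariant $(-)$-connection, so that every left-invariant tensor (in particular $B$) is parallel, where the paper invokes the identity $(\nabla_V B)(X,Y,Z)=\tfrac12\,{\rm d}B(X,Y,Z,V)$ from \cite{MR2651537} together with ${\rm d}B=0$.
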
 

\begin{proof}
By \eqref{eq:Tsplit}, the torsion $3$-form $T$ splits accordingly to \eqref{eq:standardBKL}, and so $\nabla$ restricts to metric connections with totally skew-symmetric torsion on each factor of \eqref{eq:standardBKL}. By \cite[Theorem 1]{MR4554474}, the BKL condition is equivalent to \eqref{eq:BKL}, and so we need to show that ${\rm d}T=0$ and $\nabla T=0$ on each factor.

On $\mathsf{K}$, the restricted Bismut connection takes the form
$$
\nabla = D +\tfrac12B \,\, ,
$$
where $B$ was defined in \eqref{eq:defB}, and so is flat by \cite[Chapter X, Proposition 2.12]{MR1393941}. A direct computation based on the Jacobi identity shows that ${\rm d}B = 0$. Moreover, by \cite[Proposition 2.1]{MR2651537}, it follows that
$$
(\nabla_VB)(X,Y,Z) = \tfrac12{\rm d}B(X,Y,Z,V) = 0
$$
for any $X,Y,Z,V \in \mathfrak{k}$.

On each $S_i$, one has
$$
{\rm d}\big(H_{\ell +i}^* \wedge {\rm d}H_{\ell +i}^*\big) = 0 \,\, .
$$
Moreover, if we choose a local unitary frame $(E_{\ell +i}, F_{\ell +i})$ for $H_{\ell +i}^{\perp} \cap TS_i$ as in the proof of Proposition \ref{prop:T}, by \eqref{eq:DHEF} and \eqref{eq:diffcofSas} we get
$$\begin{aligned}
\nabla H_{\ell +i} &= 0 \,\, , \\
\nabla E_{\ell +i} &= c_iH_{\ell +i}^* \otimes F_{\ell +i} -\vartheta_{\ell +i} \otimes F_{\ell +i} \,\, , \\
\nabla F_{\ell +i} &= -c_iH_{\ell +i}^* \otimes E_{\ell +i} +\vartheta_{\ell +i} \otimes E_{\ell +i} \,\, ,
\end{aligned}$$
and so
$$
\nabla H_{\ell +i}^* =0 \,\, , \quad \nabla{\rm d}H_{\ell +i}^* =  2c_i \nabla\big(E_{\ell +i}^* \wedge F_{\ell +i}^*\big) = 0 \,\, .
$$
This concludes the proof.
\end{proof}

\medskip
\section{Classification of pluriclosed manifolds with parallel Bismut torsion}
\label{sect:classification} \setcounter{equation} 0

Let $(M^{2n}, J, g)$ be a simply-connected BKL manifold of complex dimension $n$. We assume that $(M, J)$ is complex irreducible, {\it i.e.}, it does not split as a product of complex manifolds of lower dimension, and that $g$ is non-K\"ahler. We follow the notation of Section \ref{sect:prel}. From equations \eqref{eq:BKL}, the following crucial observation holds true.

\begin{proposition}
Let $o \in M$ be a distinguished point and $\mathfrak{g} \coloneqq (T_oM,T_o)$. Then, $\mathfrak{g}$ is a compact Lie algebra and the triple $\mathcal{G} = (TM, T,\mathfrak{g})$ is a Lie algebra bundle over $M$. Moreover, for any point $p \in M$, $g_p$ is a bi-invariant Euclidean scalar product on $(T_pM,T_p)$ and $J_p$ is a linear complex structure on $(T_pM,T_p)$.
\end{proposition}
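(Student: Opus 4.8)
The plan is to establish all the assertions by pointwise algebra on the torsion $3$-form $\Theta(X,Y,Z) \coloneqq g(T(X,Y),Z)$, which by \eqref{eq:Bismutconn} equals $-{\rm d}^c\omega$ and, by \eqref{eq:torsionBismut}, satisfies $\Theta(X,Y,Z) = {\rm d}\omega(JX,JY,JZ)$. Fix $p \in M$ and write $[X,Y] \coloneqq T_p(X,Y)$ for the candidate bracket on $T_pM$. Two of the claims are essentially formal. Since $\Theta$ is totally skew-symmetric, $g([X,Y],Z) = \Theta(X,Y,Z) = -\Theta(X,Z,Y) = -g([X,Z],Y)$, which is exactly ${\rm ad}$-invariance; hence $g_p$ is a bi-invariant Euclidean product on $(T_pM,[\cdot,\cdot])$. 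Next, $J_p^2 = -{\rm Id}$ is immediate, so to see that $J_p$ is a linear complex structure it suffices to check the algebraic integrability condition $[JX,JY] - [X,Y] - J[JX,Y] - J[X,JY] = 0$. Feeding $\Theta(X,Y,Z) = {\rm d}\omega(JX,JY,JZ)$ into \eqref{eq:dwJ} produces the identity
\[
\Theta(X,Y,Z) = \Theta(JX,JY,Z) + \Theta(JX,Y,JZ) + \Theta(X,JY,JZ) \, ,
\]
and pairing the left-hand side of the integrability condition against an arbitrary vector $Z$, using $g(JU,Z) = -g(U,JZ)$, reduces its vanishing precisely to this identity; so this part uses only that $J$ is integrable.

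The heart of the proof is the Jacobi identity for $\mathfrak{g} = (T_oM, T_o)$, and here both equations in \eqref{eq:BKL} enter. By \eqref{eq:Bismutconn} the Bismut connection is $\nabla = D + \tfrac12 T$, with $D$ the Levi-Civita connection, so $\nabla\Theta = 0$ (the second condition in \eqref{eq:BKL}) rewrites, after expanding the slot-wise action of the difference tensor, as
\[
(D_X\Theta)(Y,Z,W) = \tfrac12\big(\Theta(T(X,Y),Z,W) + \Theta(Y,T(X,Z),W) + \Theta(Y,Z,T(X,W))\big) \, .
\]
Using the cyclic symmetry of $\Theta$ one gets the key identification $\Theta(T(A,B),C,D) = g(T(A,B),T(C,D))$, which turns the right-hand side into $\tfrac12\,\sigma(X,Y,Z,W)$, where
\[
\sigma(X,Y,Z,W) \coloneqq g(T(X,Y),T(Z,W)) + g(T(X,Z),T(W,Y)) + g(T(X,W),T(Y,Z)) \, .
\]
A short computation shows that $\sigma$ is totally skew-symmetric, hence a genuine $4$-form, and that, rewriting the same three contractions, $\sigma(X,Y,Z,W) = g\big(T(T(X,Y),Z) + T(T(Y,Z),X) + T(T(Z,X),Y),\,W\big)$ is the Jacobiator of $[\cdot,\cdot]$ paired against $W$. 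Inserting $(D_X\Theta)(Y,Z,W) = \tfrac12\sigma(X,Y,Z,W)$ into the exterior-derivative formula for the torsion-free connection $D$, namely ${\rm d}\Theta(X,Y,Z,W) = (D_X\Theta)(Y,Z,W) - (D_Y\Theta)(X,Z,W) + (D_Z\Theta)(X,Y,W) - (D_W\Theta)(X,Y,Z)$, and using that $\sigma$ is a $4$-form, we obtain ${\rm d}\Theta = 2\sigma$. Since $\Theta = -{\rm d}^c\omega$ and ${\rm d}{\rm d}^c\omega = 0$ by \eqref{eq:BKL}, it follows that $\sigma = 0$; therefore the Jacobiator vanishes and $(T_oM, T_o)$ is a Lie algebra.

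To globalise, I would use that $\nabla$ is metric and preserves both $J$ and the torsion $T$ (by \eqref{eq:BKL}): consequently $\nabla$-parallel transport along any path in $M$ is a linear isometry which intertwines the complex structures and the brackets, hence a Lie algebra isomorphism between any two fibers of $(TM,T)$. Thus every fiber $(T_pM,T_p)$ is a Lie algebra isomorphic to $\mathfrak{g}$, carries the bi-invariant metric $g_p$ and the linear complex structure $J_p$ (as also verified directly above at an arbitrary $p$), and the Jacobi identity holds fiberwise. This gives condition $(i)$ in the definition of a Lie algebra bundle, and since all fibers are mutually isomorphic, condition $(ii)$ follows automatically from \cite[Theorem 3]{MR0500955}. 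Hence $\mathcal{G} = (TM, T, \mathfrak{g})$ is a Lie algebra bundle, which completes the proof.

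I expect the only genuinely delicate point to be the sign- and symmetry-bookkeeping behind the identity ${\rm d}\Theta = 2\sigma$, together with the observation that the quadratic expression $\sigma$ is simultaneously a $4$-form and the Jacobiator of $[\cdot,\cdot]$; both of these hinge on the cyclic-symmetry identity $\Theta(T(A,B),C,D) = g(T(A,B),T(C,D))$. Once these are in place, the coupling of ${\rm d}\Theta = 0$ with $\nabla\Theta = 0$ yields the Jacobi identity with no further effort, and the remaining statements are soft.
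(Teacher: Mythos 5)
Your proposal is correct and follows essentially the same route as the paper: bi-invariance of $g_p$ from total skew-symmetry, the linear complex structure condition from \eqref{eq:dwJ}, the Jacobi identity from the vanishing of $\sigma_T(X,Y,Z,W)=g(T(X,Y),T(Z,W))+g(T(Y,Z),T(X,W))+g(T(Z,X),T(Y,W))$, and globalisation via $\nabla$-parallel transport together with \cite[Theorem 3]{MR0500955}. The only difference is that the paper cites \cite[Eq.\ (3.20)]{MR1822270} for the identity ${\rm d}\Theta=2\sigma_T$ under $\nabla\Theta=0$, whereas you derive it from scratch; your derivation (and its sign bookkeeping) checks out.
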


\begin{proof}
By \cite[Eq.\ (3.20)]{MR1822270}, equations \eqref{eq:BKL} imply that
$$
g(T(X,Y),T(Z,V)) +g(T(Y,Z),T(X,V)) +g(T(Z,X),T(Y,V)) = 0 \,\, .
$$
Since $T$ is totally skew-symmetric, it follows that
$$
g(T(X,Y),T(Z,V)) = g(T(T(X,Y),Z),V)
$$
and so
$$
T(T(X,Y),Z) +T(T(Y,Z),X) +T(T(Z,X),Y)=0 \,\, ,
$$
{\it i.e.}, $T$ verifies the Jacobi identity. Fix now $p\in M$ and let $\gamma : [0,1] \to M$ be a geodesic verifying $\gamma(0)=o$ and $\gamma(1)=p$. Denote by $\tau_{\gamma} : T_oM \to T_pM$ the $\nabla$-parallel transport along $\gamma$. Then, since $T$ is parallel, it is immediate to realize that $(\tau_{\gamma})^*T_p = T_o$. This implies that each fiber $(T_pM,T_p)$ is isomorphic as Lie algebra to $\mathfrak{g}$, which implies that $\mathcal{G} = (TM, T,\mathfrak{g})$ is a Lie algebra bundle in virtue of \cite[Theorem 3]{MR0500955}.

Fix now $p \in M$ and let $v, w_1, w_2 \in T_pM$. Notice that
$$
g_p(T_p(v,w_1),w_2) = -g_p(w_1,T_p(v,w_2)) \,\, ,
$$
which in turn implies that $g_p$ is a bi-invariant Euclidean scalar product on $(T_pM,T_p)$. Finally, \eqref{eq:dwJ} implies that
$$
T_p(J_pw_1,J_pw_2) = T_p(w_1,w_2) +J_pT_p(J_pw_1,w_2) +J_pT_p(w_1,J_pw_2)
$$
and so $J_p$ is a linear complex structure on $(T_pM,T_p)$.
\end{proof}

By \cite[Section 2]{MR994129}, for any $p \in M$, the complex structure $J_p$ determines a unique maximal torus $\widetilde{\mathcal{T}}_p$ of $(T_pM,T_p)$ (see Section \ref{sect:prelLie}). Let also $\mathcal{Z}_p$ be the center of $(T_pM,T_p)$, $\mathcal{T}_p$ the $g_p$-orthogonal complement of $\mathcal{Z}_p$ in $\widetilde{\mathcal{T}}_p$ and $\mathcal{P}_p$ the $g_p$-orthogonal complement of $\widetilde{\mathcal{T}}_p$ in $T_pM$. In all these cases, the dependence on the point $p$ is smooth and so these spaces determine the following splitting of the Lie algebra bundle $\mathcal{G}$ as the direct sum of $g$-orthogonal subbundles:
\begin{equation} \label{eq:Liealgebroid}
\mathcal{G} = \mathcal{Z} + \mathcal{T} + \mathcal{P} \,\, .
\end{equation}
Notice that $\mathcal{Z}$ and $\mathcal{T}$ are, in particular, Lie algebra subbundles of $\mathcal{G}$, with typical fibers $\mathfrak{z}$ and $\mathfrak{t}$, respectively, while $\mathcal{P}$ is just a vector subbundle, with typical fiber $\mathfrak{p}$. For the sake of notation, we set $\ell \coloneqq {\rm dim}(\mathfrak{z})$ and $q \coloneqq {\rm dim}(\mathfrak{t})$.

\begin{proposition} \label{prop:par}
The distributions $\mathcal{Z}$, $\mathcal{T}$ and $\mathcal{P}$ in \eqref{eq:Liealgebroid} are $\nabla$-parallel.
\end{proposition}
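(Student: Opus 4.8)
The plan is to exploit the fact that $\nabla$-parallel transport is compatible with every structure entering the definition of the splitting \eqref{eq:Liealgebroid}. Fix a piecewise-smooth curve $\gamma$ from the base point $o$ to an arbitrary $p \in M$ and let $\tau_{\gamma} \colon T_oM \to T_pM$ be the $\nabla$-parallel transport along $\gamma$. Since $\nabla T = 0$, $\nabla g = 0$ and $\nabla J = 0$, the map $\tau_{\gamma}$ is simultaneously an isomorphism of Lie algebras $(T_oM,T_o) \to (T_pM,T_p)$ (as already observed, $(\tau_{\gamma})^*T_p = T_o$), a linear isometry $(T_oM,g_o) \to (T_pM,g_p)$, and $\mathbb{C}$-linear, {\it i.e.}, $\tau_{\gamma} \circ J_o = J_p \circ \tau_{\gamma}$. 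Recalling that a smooth distribution of $TM$ is $\nabla$-parallel precisely when it is invariant under $\tau_{\gamma}$ for every such $\gamma$, and that the subbundles $\mathcal{Z}$, $\mathcal{T}$, $\mathcal{P}$ are already known to be smooth, it suffices to check that $\tau_{\gamma}$ maps $\mathcal{Z}_o$, $\mathcal{T}_o$, $\mathcal{P}_o$ onto $\mathcal{Z}_p$, $\mathcal{T}_p$, $\mathcal{P}_p$ respectively.

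For $\mathcal{Z}$ this is immediate: the center of a Lie algebra is preserved by any Lie algebra isomorphism, so $\tau_{\gamma}(\mathcal{Z}_o) = \mathcal{Z}_p$. (Equivalently, differentiating the identity $T(s,\,\cdot\,) \equiv 0$ for a local section $s \in \Gamma(\EuScript{U};\mathcal{Z})$ and using $\nabla T = 0$ yields $T(\nabla_X s,\,\cdot\,) \equiv 0$, hence $\nabla_X s \in \Gamma(\EuScript{U};\mathcal{Z})$.) For $\widetilde{\mathcal{T}} \coloneqq \mathcal{Z} + \mathcal{T}$ we invoke \cite{MR994129} as recalled in Section \ref{sect:prelLie}: $\widetilde{\mathcal{T}}_p$ is the \emph{unique} maximal abelian subalgebra of $(T_pM,T_p)$ that is $J_p$-invariant and on which $\mathrm{ad}$ commutes with $J_p$, a characterization phrased solely in terms of the bracket $T_p$ and the complex structure $J_p$. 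Since $\tau_{\gamma}$ intertwines $T_o$ with $T_p$ and $J_o$ with $J_p$, the image $\tau_{\gamma}(\widetilde{\mathcal{T}}_o)$ is again a maximal abelian subalgebra of $(T_pM,T_p)$ enjoying properties $a)$ and $b)$ relative to $J_p$; by the uniqueness statement it must coincide with $\widetilde{\mathcal{T}}_p$. Hence $\widetilde{\mathcal{T}}$ is $\nabla$-parallel.

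Finally, because $\tau_{\gamma}$ is a $g$-isometry it preserves $g$-orthogonal complements inside $\nabla$-parallel subbundles; applying this to $\mathcal{Z}_p \subset \widetilde{\mathcal{T}}_p \subset T_pM$ shows that $\mathcal{T}_p$ (the $g_p$-orthogonal complement of $\mathcal{Z}_p$ in $\widetilde{\mathcal{T}}_p$) and $\mathcal{P}_p$ (the $g_p$-orthogonal complement of $\widetilde{\mathcal{T}}_p$ in $T_pM$) are likewise invariant under $\tau_{\gamma}$, so $\mathcal{T}$ and $\mathcal{P}$ are $\nabla$-parallel. The only non-formal ingredient is the uniqueness part of \cite{MR994129}; the rest is a direct consequence of $\nabla$ preserving $g$, $T$ and $J$, so I do not anticipate any serious obstacle. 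The one point deserving a little care is to make sure the characterization of $\widetilde{\mathcal{T}}_p$ being used is genuinely intrinsic to the data $(T_pM,T_p,J_p)$ that $\tau_{\gamma}$ transports, which, as noted above, is indeed the case.
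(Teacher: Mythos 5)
Your proposal is correct and follows essentially the same route as the paper: parallel transport preserves $T$, $g$ and $J$, hence maps the (unique, per \cite{MR994129}) maximal torus $\widetilde{\mathcal{T}}_o$ onto $\widetilde{\mathcal{T}}_p$ and the center onto the center, and the remaining subbundles $\mathcal{T}$, $\mathcal{P}$ are then parallel as orthogonal complements of parallel subbundles. Your parenthetical argument for $\mathcal{Z}$ via $\nabla T=0$ is in fact the exact computation the paper uses, and your care about the intrinsic characterization of $\widetilde{\mathcal{T}}_p$ just makes explicit what the paper dispatches with ``by construction.''
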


\begin{proof}
Since the parallel transport preserves the Lie algebra structure of the fibers of $\mathcal{G}$, it follows by construction that the distribution $\widetilde{\mathcal{T}} = \mathcal{Z} + \mathcal{T}$ is $\nabla$-parallel. Moreover, the distribution $\mathcal{Z}$ is also $\nabla$-parallel. Indeed, if we fix a point $p \in M$ and a path $\gamma : I \subset \mathbb{R} \to M$ such that $0 \in I$ and $\gamma(0)=p$. Then, given $z \in \mathcal{Z}_p$, we denote by $Z=Z(t)$ the $\nabla$-parallel vector field along $\gamma=\gamma(t)$ such that $Z(0)=z$. Since the torsion $T$ is $\nabla$-parallel, it follows that
$$
\nabla_{\dot{\gamma}(t)}\big(Z(t) \lrcorner\, T_{\gamma(t)}\big) = \big(\nabla_{\dot{\gamma}(t)}Z(t)\big) \lrcorner\, T_{\gamma(t)} +Z(t) \lrcorner\big(\nabla_{\dot{\gamma}(t)}T_{\gamma(t)}\big) = 0
$$
and so, since $z \lrcorner\,T_p = 0$, we get $Z(t) \lrcorner\, T_{\gamma(t)} = 0$ for any $t \in I$. Finally, since $\nabla$ preserves $g$, it follows that $\mathcal{T}$, that is the orthogonal complement of $\mathcal{Z}$ inside $\widetilde{\mathcal{T}}$, and $\mathcal{P}$, that is the orthogonal complement of $\widetilde{\mathcal{T}}$ inside $\mathcal{G}$, are $\nabla$-parallel too.
\end{proof}

Next, we are going to show that $\mathcal{T}$ is $\nabla$-flat, which will allow us to deduce that it is globally generated by $\nabla$-parallel vector fields. We begin with the following

\begin{lemma} \label{lem:tjt}
The following equality holds true: $\mathcal{T} + J\mathcal{T} =  \mathcal{T} +\mathcal{Z}$.
\end{lemma}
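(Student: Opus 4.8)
The plan is to work pointwise and exploit the structure of the Lie algebra bundle $\mathcal{G}$ together with the compatibility between $J$ and the bracket $T$. Fix a point $p \in M$ and abbreviate $\mathfrak{g} \coloneqq (T_pM, T_p)$, $\mathfrak{t} \coloneqq \mathcal{T}_p$, $\mathfrak{z} \coloneqq \mathcal{Z}_p$, $\mathfrak{k} \coloneqq [\mathfrak{g},\mathfrak{g}]$; recall from Section~\ref{sect:prelLie} that $\widetilde{\mathfrak{t}} \coloneqq \mathfrak{z} + \mathfrak{t}$ is the unique maximal torus of $\mathfrak{g}$ determined by $J_p$, that it is $J_p$-invariant, and that $\mathfrak{g}$ splits $g_p$-orthogonally as $\mathfrak{g} = \mathfrak{z} + \mathfrak{t} + \sum_{\alpha \in \Delta^+} \mathfrak{r}_\alpha$ with $J_p$ preserving each real root space $\mathfrak{r}_\alpha$. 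Since $\widetilde{\mathfrak{t}}$ is $J_p$-invariant and $\mathfrak{z} \subset \widetilde{\mathfrak{t}}$ is $J_p$-invariant (being the center, it is preserved by any Lie algebra automorphism, and $J_p$ is one by the second defining property of a linear complex structure), the $g_p$-orthogonal complement $\mathfrak{t}$ of $\mathfrak{z}$ in $\widetilde{\mathfrak{t}}$ is also $J_p$-invariant; hence $J_p \mathfrak{t} = \mathfrak{t}$. This already gives $\mathcal{T}_p + J_p\mathcal{T}_p = \mathcal{T}_p \subseteq \mathcal{T}_p + \mathcal{Z}_p$, so the inclusion $\mathcal{T} + J\mathcal{T} \subseteq \mathcal{T} + \mathcal{Z}$ is immediate.

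Wait — that would make the statement say $\mathcal{T} = \mathcal{T} + \mathcal{Z}$, i.e. $\mathcal{Z} \subseteq \mathcal{T}$, which is false since the sum $\mathfrak{z} + \mathfrak{t}$ is direct and orthogonal. So the intended reading of the lemma must instead use a \emph{different} complex structure convention, or more plausibly $\mathcal{T}$ here denotes something whose $J$-span genuinely exhausts $\widetilde{\mathcal{T}}$. Re-examining: the natural statement in this setting is that $\mathfrak{t}$ is a maximal abelian subalgebra of $\mathfrak{k}$ (not of $\mathfrak{g}$), so it need \emph{not} be $J_p$-invariant as a subspace of $\mathfrak{k}$ — rather, $\widetilde{\mathfrak{t}} = \mathfrak{z} + \mathfrak{t}$ is $J_p$-invariant, and $J_p \mathfrak{t}$ is a complement of $\mathfrak{z}$ inside $\widetilde{\mathfrak{t}}$ that in general differs from $\mathfrak{t}$. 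The plan, then, is: first show $\mathfrak{t} + J_p\mathfrak{t} \subseteq \widetilde{\mathfrak{t}} = \mathfrak{z} + \mathfrak{t}$ using that $\widetilde{\mathfrak{t}}$ is $J_p$-invariant and $\mathfrak{t} \subseteq \widetilde{\mathfrak{t}}$; second, show the reverse inclusion $\mathfrak{z} + \mathfrak{t} \subseteq \mathfrak{t} + J_p\mathfrak{t}$. For the reverse inclusion it suffices to show $\mathfrak{z} \subseteq \mathfrak{t} + J_p\mathfrak{t}$: given $z \in \mathfrak{z}$, since $\widetilde{\mathfrak{t}}$ is $J_p$-invariant we have $J_p z \in \widetilde{\mathfrak{t}} = \mathfrak{z} + \mathfrak{t}$; write $J_p z = z' + t'$ with $z' \in \mathfrak{z}$, $t' \in \mathfrak{t}$, and show $t' \neq 0$ whenever $z \neq 0$ and then recover $z$ from $t'$ and $J_p t'$.

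The cleanest way to run the reverse inclusion is dimension counting combined with the directness of $\mathfrak{z} + \mathfrak{t}$: the map $\mathfrak{z} \to \widetilde{\mathfrak{t}}/\mathfrak{z} \cong \mathfrak{t}$ given by $z \mapsto (\text{$\mathfrak{t}$-component of } J_p z)$ is injective — if the $\mathfrak{t}$-component of $J_p z$ vanished then $J_p z \in \mathfrak{z}$, hence $z = -J_p(J_p z) \in J_p \mathfrak{z} = \mathfrak{z}$ is consistent but gives no contradiction directly; instead use that $\mathfrak{z}$ is a $J_p$-invariant subspace on which $J_p$ restricts to a complex structure, combined with $J_p$-invariance of $\mathfrak{t}^{\perp} \cap \widetilde{\mathfrak{t}}$... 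Actually the honest mechanism is: $J_p(\mathfrak{z}+\mathfrak{t}) = \mathfrak{z}+\mathfrak{t}$ and $\mathfrak{z}$ is $J_p$-invariant, so $J_p$ descends to a complex structure on the quotient $(\mathfrak{z}+\mathfrak{t})/\mathfrak{z}$, whence $\dim\mathfrak{t} = \dim\big((\mathfrak{z}+\mathfrak{t})/\mathfrak{z}\big)$ is even and, lifting, $\mathfrak{t} + J_p\mathfrak{t}$ surjects onto this quotient; since also $\mathfrak{t} + J_p \mathfrak{t} \supseteq \mathfrak{t}$, a short argument with the exact sequence $0 \to \mathfrak{z} \to \mathfrak{z}+\mathfrak{t} \to (\mathfrak{z}+\mathfrak{t})/\mathfrak{z} \to 0$ forces $\mathfrak{t} + J_p\mathfrak{t} = \mathfrak{z} + \mathfrak{t}$. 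Finally, since the whole argument is algebraic and pointwise and all the bundles $\mathcal{Z}, \mathcal{T}, \mathcal{P}$ vary smoothly, the fiberwise equality $\mathcal{T}_p + J_p\mathcal{T}_p = \mathcal{T}_p + \mathcal{Z}_p$ upgrades to the bundle equality $\mathcal{T} + J\mathcal{T} = \mathcal{T} + \mathcal{Z}$. The main obstacle is purely bookkeeping: getting the reverse inclusion $\mathcal{Z} \subseteq \mathcal{T} + J\mathcal{T}$ right, i.e. verifying that the $\mathfrak{t}$-components of $J_p\mathfrak{z}$ genuinely fill a subspace of $\mathfrak{t}$ of dimension $\dim\mathfrak{z}$, which is exactly the statement that $J_p$ does not preserve $\mathfrak{z}$ as a \emph{direct summand} complement-wise inside $\widetilde{\mathfrak{t}}$ but rotates it into $\mathfrak{t}$; equivalently that $\widetilde{\mathfrak{t}} = \mathfrak{z} + \mathfrak{t}$ with $\mathfrak{t}$ the smallest $J_p$-stable subspace containing a complement of $\mathfrak{z}$. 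I expect this to follow from Samelson's description (the reference \cite{MR994129}) that $J_p$ on $\widetilde{\mathfrak{t}}$ is built by pairing up a basis of $\mathfrak{t}$-directions with themselves only after augmenting by $\mathfrak{z}$, never leaving $\mathfrak{z}$ $J_p$-invariant-complemented inside $\widetilde{\mathfrak{t}}$ — but I would want to double-check the precise normal form before committing the inclusion to print.
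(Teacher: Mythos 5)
There is a genuine gap, and it sits exactly where you flag the need to ``double-check the precise normal form.'' First, the assertion that $\mathfrak{z}$ is $J_p$-invariant because $J_p$ is a Lie algebra automorphism is false: the second defining property of a linear complex structure is the integrability condition $[JX,JY]-[X,Y]-J[JX,Y]-J[X,JY]=0$, which is strictly weaker than $[JX,JY]=J[X,Y]$. In the simplest relevant example, $\mathfrak{g}=\mathbb{R}\times\mathfrak{su}(2)$ (the Hopf surface), the Samelson structure sends the central direction into the torus of $\mathfrak{su}(2)$, so $J\mathfrak{z}\not\subseteq\mathfrak{z}$. Worse, if $\mathfrak{z}$ were $J$-invariant then, since $J$ is orthogonal and $\widetilde{\mathfrak{t}}$ is $J$-invariant, its orthogonal complement $\mathfrak{t}$ inside $\widetilde{\mathfrak{t}}$ would be $J$-invariant too, and the lemma would collapse to $\mathcal{T}=\mathcal{T}+\mathcal{Z}$ --- precisely the absurdity you notice in your second paragraph. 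So the quotient argument resting on ``$J_p$ descends to $(\mathfrak{z}+\mathfrak{t})/\mathfrak{z}$'' cannot be repaired as written.

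Second, and more fundamentally, the lemma is not a pointwise Lie-algebraic statement, so no fiberwise argument can establish the reverse inclusion $\mathcal{Z}\subseteq\mathcal{T}+J\mathcal{T}$. On the single fiber $\mathfrak{g}=\mathbb{R}^2\times\mathfrak{su}(2)\times\mathfrak{su}(2)$ there is a perfectly good orthogonal Samelson complex structure pairing the two central directions with each other and the two torus generators with each other; for it, $\mathfrak{t}+J\mathfrak{t}=\mathfrak{t}\subsetneq\mathfrak{t}+\mathfrak{z}$. What rules this out is the standing hypothesis of Section \ref{sect:classification} that $(M,J,g)$ is complex irreducible and non-K\"ahler, which your proposal never invokes. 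The paper's mechanism is: any $J$-invariant subdistribution $\mathcal{V}\subseteq\mathcal{Z}$ can be saturated by covariant derivatives into a $J$-invariant, $\nabla$-parallel subdistribution of $\mathcal{Z}$; since sections of $\mathcal{Z}$ annihilate the torsion, $\nabla$ and $D$ agree on them, so $\mathcal{V}$ is $D$-parallel and would split off a K\"ahler de Rham factor, contradicting irreducibility. Hence $\mathcal{Z}$ contains no $J$-invariant subdistribution, and the orthogonal complement of the $J$-invariant bundle $\mathcal{T}+J\mathcal{T}$ inside the $J$-invariant bundle $\mathcal{Z}+\mathcal{T}$ --- which lies in $\mathcal{Z}$ because it is orthogonal to $\mathcal{T}$ --- must vanish. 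That global geometric input is the missing ingredient in your proof.
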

\begin{proof}
First of all, we need to prove that $\mathcal{Z}$ does not contain any $J$-invariant subdistribution. As a matter of fact, suppose $\mathcal{V} \subset \mathcal{Z}$ is the maximal $J$-invariant subdistribution of $\mathcal{Z}$, and let $\widetilde{\mathcal{V}}$ be the distribution defined at any point $p \in M$ by
$$
\widetilde{\mathcal{V}}_p \coloneqq \mathcal{V}_p +{\rm span}_{\mathbb{R}}\!\left\{ \nabla^k_{X_1,{\dots},X_k}Y\big|_p : X_1, {\dots},X_k \in \Gamma(\EuScript{U};TM) \, , \,\, Y \in \Gamma(\EuScript{U};\mathcal{V}) \, , \,\, p \in \EuScript{U} \subset M \, , \,\, k \in \mathbb{N} \right\} \,\, .
$$
By construction, since both $J$ and $\mathcal{Z}$ are $\nabla$-parallel, it is straightforward to verify that $\widetilde{\mathcal{V}}$ is a $J$-invariant and $\nabla$-parallel subdistribution of $\mathcal{Z}$. By the maximality assumption, we get $\mathcal{V} = \widetilde{\mathcal{V}}$ and so $\mathcal{V}$ is $\nabla$-parallel. Since $\mathcal{V} \subset \mathcal{Z}$, we get $DX = \nabla X$ for any $X \in \Gamma(\EuScript{U};\mathcal{V})$, and so $\mathcal{V}$ is $D$-parallel. Since, by hypothesis, $(M, J, g)$ does not contain any K\"ahler de Rham factor, we conclude that $\mathcal{V}$ is trivial.
	
Finally, let us consider the sum $\mathcal{T} + J\mathcal{T}$. This is a $J$-invariant distribution in $\mathcal{Z} + \mathcal{T}$, which is also $J$-invariant. Therefore, the orthogonal complement of $\mathcal{T} + J\mathcal{T}$ inside $\mathcal{Z} + \mathcal{T}$ is a $J$-invariant subdistribution of $\mathcal{Z}$. In virtue of the above argument, this complement has to be trivial, and so the thesis follows.
\end{proof}
	
\begin{rem} \label{rem: center dimension}
From the above proof, it immediately follows that $q \geq \ell$, that is, the dimension $\ell ={\rm dim}(\mathfrak{z})$ of the center $\mathfrak{z}$ can not be greater than half of the dimension $\ell + q$ of the whole maximal torus $\mathfrak{t} +\mathfrak{z}$.
\end{rem}

\begin{proposition} \label{prop:flatwhite}
The Bismut curvature $R$ satisfies the following property at any point $p \in M$: 
$$
\text{$R_{v,w}z = 0$ for $v,w,z \in T_pM$ whenever at least one of them lies in $\widetilde{\mathcal{T}}_p$} \,\, .
$$
In particular, $R$ vanishes identically on $\widetilde{\mathcal{T}}$.  
\end{proposition}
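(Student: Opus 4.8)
The plan is to convert the hypothesis $\nabla T = 0$ into a pointwise algebraic constraint on the Bismut curvature that forces it to degenerate along $\mathcal{T}$. Fix $p \in M$ and recall that $\mathfrak{g}_p \coloneqq (T_pM, T_p)$ is a compact Lie algebra, that $g_p$ is a bi-invariant scalar product on it, and that $\mathcal{T}_p$ is a Cartan subalgebra of the semisimple ideal $\mathfrak{k}_p \coloneqq [\mathfrak{g}_p, \mathfrak{g}_p]$. Since $\nabla$ preserves $g$, every curvature operator $R_{X,Y}$ lies in $\mathfrak{so}(T_pM, g_p)$; and since $T$ is a $\nabla$-parallel tensor, the holonomy algebra of $\nabla$ at $p$ --- which by Ambrose--Singer contains every $R_{X,Y}$ --- annihilates $T_p$. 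Unwinding this, I would first record that each $R_{X,Y}$ acts as a derivation of $\mathfrak{g}_p$, i.e.
\begin{equation*}
R_{X,Y}\big(T_p(U,V)\big) = T_p\big(R_{X,Y}U, V\big) + T_p\big(U, R_{X,Y}V\big) \qquad \text{for all } U, V \in T_pM \,\,.
\end{equation*}

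Next, since any derivation of $\mathfrak{g}_p$ preserves the derived subalgebra $\mathfrak{k}_p$ and restricts there to a derivation of the semisimple Lie algebra $\mathfrak{k}_p$, I would invoke $\mathrm{Der}(\mathfrak{k}_p) = \mathrm{ad}(\mathfrak{k}_p)$ to write $R_{X,Y}|_{\mathfrak{k}_p} = \mathrm{ad}\big(\xi(X,Y)\big)$ for a unique $\xi(X,Y) \in \mathfrak{k}_p$. On the other hand, by Proposition \ref{prop:par} the distribution $\mathcal{T}$ is $\nabla$-parallel, hence $R_{X,Y}$ preserves $\mathcal{T}_p$; therefore $\mathrm{ad}\big(\xi(X,Y)\big)$ normalizes the Cartan subalgebra $\mathcal{T}_p$ of $\mathfrak{k}_p$. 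Since the normalizer of a Cartan subalgebra in a compact semisimple Lie algebra coincides with the subalgebra itself, this yields $\xi(X,Y) \in \mathcal{T}_p$, and so, for every $z \in \mathcal{T}_p$,
\begin{equation*}
R_{X,Y}\,z = [\xi(X,Y), z] = 0 \,\,,
\end{equation*}
because $\mathcal{T}_p$ is abelian. This proves the claim when the differentiated argument lies in $\mathcal{T}_p$, and in particular that $R$ vanishes on $\mathcal{T}$.

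For the two remaining positions I would use the pair symmetry $R(X,Y,Z,V) = R(Z,V,X,Y)$ from Lemma \ref{lem:surprise}: if $v \in \mathcal{T}_p$, then for all $w, u, x \in T_pM$
\begin{equation*}
g(R_{v,w}\,u, x) = R(v,w,u,x) = R(u,x,v,w) = g(R_{u,x}\,v, w) = 0 \,\,,
\end{equation*}
where the last equality is the case just established (with $v$ in the slot being differentiated); hence $R_{v,w} = 0$, and consequently $R_{w,v} = -R_{v,w} = 0$ as well. Together with the previous paragraph, this handles every occurrence of a $\mathcal{T}_p$-vector among the three arguments of $R_{v,w}z$.

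I expect the genuinely nontrivial step to be the middle one: pinning down $R_{X,Y}|_{\mathfrak{k}_p}$ as an inner derivation whose generator is forced into the parallel Cartan subbundle $\mathcal{T}$. Everything before and after is bookkeeping --- the derivation property of $R_{X,Y}$, the fact that curvature operators preserve $\nabla$-parallel subbundles, and the already-recorded consequences of the first Bianchi identity --- but it is precisely in that step that the interaction between ``$R_{X,Y}$ is a skew derivation of $\mathfrak{g}_p$'' and ``$\mathcal{T}$ is $\nabla$-parallel'' is used in an essential way.
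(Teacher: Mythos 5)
Your argument is correct, but it closes the key case by a genuinely different mechanism than the paper. Both proofs start from the same two ingredients: $\nabla T=0$ makes each $R_{X,Y}$ act as a derivation of the fiber $(T_pM,T_p)$, and Proposition \ref{prop:par} makes $R_{X,Y}$ preserve $\mathcal{T}_p$. The paper then reduces, using the pair symmetry \eqref{eq:symR} together with the facts that $\mathcal{Z}_p\subset \mathcal{T}_p+J_p\mathcal{T}_p$ (Lemma \ref{lem:tjt}) and that $\mathcal{T}_p$ is spanned by torsions $T_p(e_i,J_pe_i)$ of vectors in $\mathcal{P}_p$, to the configuration $z\in\mathcal{D}_p$, $v,w\in\mathcal{D}_p^{\perp}$, and kills that case by playing the $\nabla$-parallelness of $\mathcal{D}$ (which forces $R_{v,w}z\in\mathcal{D}_p$) against the torsionless first Bianchi identity (which forces $R_{v,w}z=-R_{w,z}v-R_{z,v}w\in\mathcal{D}_p^{\perp}$). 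You instead invoke Lie-theoretic structure: every derivation of the semisimple ideal $\mathfrak{k}_p$ is inner, so $R_{X,Y}|_{\mathfrak{k}_p}=\mathrm{ad}\big(\xi(X,Y)\big)$, and since $\mathrm{ad}(\xi)$ preserves the Cartan subalgebra $\mathcal{T}_p$, self-normalization of Cartan subalgebras puts $\xi(X,Y)$ inside the abelian $\mathcal{T}_p$, whence $R_{X,Y}$ annihilates $\mathcal{T}_p$; the remaining slots then follow from \eqref{eq:symR} exactly as in the paper. Your route is arguably more transparent for the slot $z\in\mathcal{T}_p$ and bypasses the paper's somewhat terse reduction, at the cost of leaning on $\mathrm{Der}(\mathfrak{k})=\mathrm{ad}(\mathfrak{k})$ and the self-normalizing property of Cartan subalgebras; the paper's orthogonality trick is more elementary and has the advantage of being reusable verbatim later, e.g.\ to kill the mixed components \eqref{eq:almost-flat1} with $\alpha\neq\beta$.
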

\begin{proof}
Fix $p \in M$ and $v,w,z \in T_pM$. We emphasize the following facts.
\begin{itemize}
\item[$i)$] By Lemma \ref{lem:tjt}, any vector $c \in \mathcal{Z}_p$ can be written as a sum $c = h + Jh'$, for some $h, h' \in \mathcal{T}_p$.
\item[$ii)$] Any vector $h \in \mathcal{T}_p$ can be written as
$$
h = \sum_{i} \lambda_i\,T(e_i,Je_i) \,\, ,
$$
for some $e_i \in \mathcal{P}_p$ and $\lambda_i \in \mathbb{R}$ (see Section \ref{sect:prelLie}).
\item[$iii)$] Since $\nabla J = 0$, then $R_{\,\cdot,\cdot} \circ J = J \circ R_{\,\cdot,\cdot}$.
\item[$iv)$] Since $\nabla T = 0$, then
$$
R_{\,\cdot,\cdot}(T(u_1, u_2)) = T(R_{\,\cdot,\cdot}u_1, u_2) + T(u_1,R_{\,\cdot,\cdot}u_2)
$$
for any $u_1,u_2 \in T_pM$.
\end{itemize}
By Proposition \ref{prop:par}, the distributions $\mathcal{Z}$, $\mathcal{T}$ and $\mathcal{P}$ are $\nabla$-parallel, and so if $z \in \mathcal{D}_p$, with $\mathcal{D}_p \in \big\{\mathcal{Z}_p, \mathcal{T}_p, \mathcal{P}_p\}$, then $R_{v,w}z \in \mathcal{D}_p$ for all $v, w \in T_pM$. Therefore, the torsionless first Bianchi identity \eqref{eq:1bianchi} implies that
\begin{equation} \label{eq:pippo1}
R_{v,w}z = 0 \quad \text{if $z \in \mathcal{D}_p$ and $v, w \in \mathcal{D}_p^{\perp}$} \,\, ,
\end{equation}
while \eqref{eq:symR} implies that
\begin{equation} \label{eq:pippo2}
R_{v,w}z = 0 \quad \text{if $v \in \mathcal{D}_p$ and $w \in \mathcal{D}_p^{\perp}$, $z \in T_pM$} \,\, .
\end{equation}
Assume that at least one of $v, w, z$ lie in $\widetilde{\mathcal{T}}_p$. By \eqref{eq:pippo1} and \eqref{eq:pippo2}, it follows that $R_{v,w}z = 0$ unless $v, w, z \in \mathcal{T}_p$ or $v, w, z \in \mathcal{Z}_p$. If $v, w, z \in \mathcal{T}_p$, claims $ii)$, $iii)$ and $iv)$ show that
$$
R_{vw}z = \sum_i \lambda_i \Big(T(R_{v,w}e_i,Je_i) +T(e_i,JR_{v,w}e_i)\Big) \,\, ,
$$
which is zero by \eqref{eq:pippo2}. If $v, w, z \in \mathcal{Z}_p$, claims $i)$ and $iii)$ show that
$$
R_{v,w}z = R_{v,w}h +JR_{v,w}h' \,\, ,
$$
which is zero by \eqref{eq:pippo2}.
\end{proof}

Since $M$ is simply-connected, Proposition \ref{prop:flatwhite} ensures that we can use parallel transport to define a global, orthonormal, $\nabla$-parallel frame $(H_{\ell +1},{\dots},H_{\ell +q})$ for $\mathcal{T}$. Consequently, the skew-symmetric endomorphism fields
$$
H_{\ell +i} \lrcorner\,T\, |_{\mathcal{P}} \quad \text{ with $1 \leq i \leq q$}
$$
are also $\nabla$-parallel, which in turn implies, by using the parallel transport, that their (complexified) eigenspaces are $\nabla$-parallel, and their eigenvalues are constant. 
This allows us to split $\mathcal{P}$ into a sum of globally defined, two-dimensional, $J$-invariant, $\nabla$-parallel distributions $\mathcal{R}_{\alpha}$, corresponding to a choice of positive roots $\alpha \in \Delta^+$ for the Lie algebra $\mathfrak{g}$. More concretely:
\begin{equation} \label{eq:quasifinaldec}
\mathcal{G} = \mathcal{Z} + \mathcal{T} + \sum_{\alpha \in \Delta^+} \mathcal{R}_\alpha \,\, .
\end{equation}

\begin{lemma} \label{lem:almost-flat}
The only possibly non-vanishing components of the curvature $R$ are of the form
$$
R(v_\alpha, Jv_\alpha, v_\alpha, Jv_\alpha) \quad \text{for $\alpha \in \Delta^+$, $v_\alpha \in \mathcal{R}_\alpha|_p$, $p \in M$.}
$$
\end{lemma}

\begin{proof}
Notice that since the distributions $\mathcal{R}_{\alpha}$ are $\nabla$-parallel. Hence, we obtain
$$
R_{\,\cdot,\cdot}\mathcal{R}_\alpha \subset \mathcal{R}_\alpha \,\, .
$$
Moreover, thanks to \eqref{eq:symR}, this symmetry also holds true in the first two entries of $R$, that is, the only possibly non-vanishing components of the curvature $R$ are of the form
\begin{equation} \label{eq:almost-flat1}
R(v_\alpha, Jv_\alpha, v_\beta, Jv_\beta) \quad \text{ for $\alpha, \beta \in \Delta^+$, $v_\alpha \in \mathcal{R}_\alpha|_p$, $v_\beta \in \mathcal{R}_\beta|_p$, $p \in M$. }
\end{equation}
Moreover, using the torsionless first Bianchi identity \eqref{eq:1bianchi} as in the proof of Proposition \ref{prop:flatwhite}, we deduce that the components in \eqref{eq:almost-flat1} with $\alpha \neq \beta$ vanish, and this concludes the proof.
\end{proof}

Lemma \ref{lem:almost-flat}  plays a central role in the proof of the following 

\begin{prop} \label{prop:trick}
Any Lie algebra subbundle of $\mathcal{T} +\mathcal{P}$ whose typical fiber is an ideal of $\mathfrak{t} +\mathfrak{p}$ of rank at least 2 is $\nabla$-flat. 
\end{prop}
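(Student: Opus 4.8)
\medskip

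The plan is to prove that the Bismut curvature vanishes identically on $\mathcal{H}$. By \eqref{eq:almost-flat} the only components of $R$ that can survive are the $R(v_\alpha,Jv_\alpha,v_\alpha,Jv_\alpha)$, so $\mathcal{H}$ will be $\nabla$-flat as soon as $R_{v_\alpha,Jv_\alpha}=0$ for every positive root $\alpha$ of the typical fiber, every $p\in M$ and every unit $v_\alpha\in\mathcal{R}_\alpha|_p$; moreover we may assume $\mathfrak{h}$ simple, since splitting $\mathcal{H}$ along the simple ideals of $\mathfrak{h}$ reduces to this case. Fix then $p$, $\alpha$ and $v_\alpha$, and set $A\coloneqq R_{v_\alpha,Jv_\alpha}\in{\rm End}(T_pM)$. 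First I would record the properties of $A$: from \eqref{eq:almost-flat} one reads off that $A$ annihilates every vector outside $\mathcal{R}_\alpha|_p$ and maps $\mathcal{R}_\alpha|_p$ into itself; since $\nabla g=0$ and $\nabla J=0$, the operator $A$ is $g_p$-skew-symmetric and commutes with $J_p$; and since $\nabla T=0$ one has $A(T(u_1,u_2))=T(Au_1,u_2)+T(u_1,Au_2)$ (see the proof of Proposition~\ref{prop:flatwhite}), i.e. $A$ is a derivation of the compact Lie algebra $(T_pM,T_p)=\mathfrak{g}$.

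The second step is to pin down $A$ via the structure of the derivations of a compact Lie algebra. Writing $\mathfrak{g}=\mathfrak{z}\oplus\mathfrak{k}$, with $\mathfrak{k}=[\mathfrak{g},\mathfrak{g}]=\mathfrak{t}+\mathfrak{p}$ the semisimple part, any derivation preserves both $\mathfrak{z}$ and $\mathfrak{k}$ and is inner on $\mathfrak{k}$; as $A$ kills $\mathcal{Z}_p=\mathfrak{z}$, we get $A={\rm ad}(w)$ for a unique $w\in\mathfrak{k}$. Because $A$ kills $\mathcal{T}_p$, the element $w$ centralises $\mathcal{T}_p$, which is a maximal torus of $\mathfrak{k}$ and hence self-centralising; thus $w\in\mathcal{T}_p$. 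Decomposing $w$ according to \eqref{eq:k-simplefactors} and using that $A$ annihilates $\mathcal{R}_\beta|_p$ — so that $\beta(w)=0$ — for every root $\beta\neq\pm\alpha$, the component of $w$ in each simple ideal of $\mathfrak{k}$ different from $\mathfrak{h}$ is killed by all the roots of that ideal and therefore vanishes; hence $w$ lies in the Cartan subalgebra of $\mathfrak{h}$ and $\beta(w)=0$ for every root $\beta\neq\pm\alpha$ of $\mathfrak{h}$.

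The third step is where the hypothesis ${\rm rank}(\mathfrak{h})\geq 2$ enters. Since $\mathfrak{h}$ is simple of rank at least two, its root system $\Delta_{\mathfrak{h}}$ is irreducible of rank at least two, and in such a root system the roots different from $\pm\alpha$ span the whole Cartan dual. Indeed, with $(\,\cdot\,,\,\cdot\,)$ the inner product dual to $-\mathcal{B}_{\mathfrak{h}}$ and $s_\beta$ the reflection in $\beta$: if $0\neq u$ were $\mathcal{B}_{\mathfrak{h}}$-orthogonal to every $\beta\in\Delta_{\mathfrak{h}}\setminus\{\pm\alpha\}$, then $(s_\beta\alpha,u)=(\alpha,u)$ for each such $\beta$; if $(\alpha,u)\neq 0$ this forces $s_\beta\alpha\in\{\pm\alpha\}$, hence $(\alpha,\beta)=0$, so $\alpha$ would be orthogonal to all the other roots and $\Delta_{\mathfrak{h}}$ would split, contradicting irreducibility (rank $\geq 2$ guarantees a root in $\alpha^{\perp}$); and if $(\alpha,u)=0$ too, then $u\perp\Delta_{\mathfrak{h}}$, so $u=0$. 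Applying this to $w$ gives $w=0$, whence $A=R_{v_\alpha,Jv_\alpha}=0$; as $p$, $\alpha$, $v_\alpha$ were arbitrary, $R$ vanishes on $\mathcal{H}$.

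The main obstacle is the combination of the second and third steps: recognising, from $\nabla J=\nabla T=0$ together with \eqref{eq:almost-flat}, that $R_{v_\alpha,Jv_\alpha}$ must equal ${\rm ad}$ of a Cartan element of $\mathfrak{h}$ killed by every root except $\pm\alpha$, and then exploiting the elementary root-theoretic fact that no root of an irreducible root system of rank at least two is orthogonal to all the others. The remaining steps are routine manipulations with the $\nabla$-parallel splitting \eqref{eq:quasifinaldec} and the curvature symmetries of Lemma~\ref{lem:surprise}.
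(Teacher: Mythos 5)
Your proof is correct, and it takes a genuinely different route from the paper's. The paper works with local unitary frames $\{V_\alpha,JV_\alpha\}$ of the root subbundles, observes that the connection form diagonalises as $\nabla V_\alpha=\theta_\alpha\otimes JV_\alpha$, uses $\nabla T=0$ to get the additivity $\theta_{\alpha_1+\alpha_2}=\theta_{\alpha_1}+\theta_{\alpha_2}$ over root triples, and then combines $R_{\,\cdot,\cdot}V_\alpha={\rm d}\theta_\alpha\otimes JV_\alpha$ with \eqref{eq:almost-flat} to force ${\rm d}\theta_{\alpha_1}={\rm d}\theta_{\alpha_2}={\rm d}\theta_{\alpha_3}=0$; its Lie-theoretic input is that in a compact simple algebra of rank at least $2$ every root belongs to a triple $\alpha_1+\alpha_2=\alpha_3$. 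You instead argue pointwise: $R_{v_\alpha,Jv_\alpha}$ is a derivation of the fibre Lie algebra (item iv) in the proof of Proposition \ref{prop:flatwhite}), hence equal to ${\rm ad}(w)$ with $w$ in the Cartan $\mathcal{T}_p$ because it annihilates $\mathcal{Z}_p+\mathcal{T}_p$, and \eqref{eq:almost-flat} forces $\beta(w)=0$ for every root $\beta\neq\pm\alpha$; your input is that in an irreducible root system of rank at least $2$ the roots other than $\pm\alpha$ still span, which you prove correctly with the reflections $s_\beta$. The two root-system facts are essentially equivalent (both reduce to the observation that $\alpha$ cannot be orthogonal to all other roots), but your version avoids local frames and the structure equation and pins down the curvature operator itself, which is arguably cleaner. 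One caveat, which you share with the paper's own proof: the reduction to $\mathfrak{h}$ simple is only legitimate when every simple ideal of $\mathfrak{h}$ has rank at least $2$ (an ideal isomorphic to $\mathfrak{su}(2)\oplus\mathfrak{su}(2)$ has rank $2$ but the argument does not apply to it); this is harmless because the proposition is only ever applied to the simple ideals of $\mathfrak{t}+\mathfrak{p}$, but it is worth stating the reduction as ``each simple constituent has rank at least $2$'' rather than deducing it from the rank of $\mathfrak{h}$.
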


\begin{proof}
We fix an open set $\EuScript{U} \subset M$ such that each $\mathcal{R}_{\alpha}|_{\EuScript{U}}$ is generated by a local unitary frame $\{V_{\alpha}, JV_{\alpha}\}$, so that
$$
T(H,V_{\alpha}) = \alpha(H)JV_{\alpha} \,\, , \quad T(H,JV_{\alpha}) = -\alpha(H)V_{\alpha}
$$
for any $H \in \Gamma(\EuScript{U};\mathcal{T})$. 
Let $\theta$ be the connection $1$-form associated to $\nabla$ with respect to the local frame $\{H_{\ell +1}, {\dots}, H_{\ell +q}\} \cup \{V_{\alpha}, JV_{\alpha} : \alpha \in \Delta^+\}$. By construction, $\nabla H_{\ell +i} = 0$ for any $1 \leq i \leq q$. Moreover, we know that $\nabla_X V_{\alpha}$ lies in $\mathcal{R}_{\alpha}|_{\EuScript{U}}$ and it is straightforward to check that it is orthogonal to $V_{\alpha}$, for any $X \in \Gamma(\EuScript{U};TM)$. Therefore, the only possibly non-vanishing components of $\theta$ are given by
\begin{equation} \label{eq:nabladiag}
\nabla V_{\alpha} = \theta_{\alpha} \otimes JV_{\alpha} \,\, .
\end{equation}

Take now two roots $\alpha_1, \alpha_2 \in \Delta^+$ such that their sum is itself a root $\alpha_3 \coloneqq \alpha_1 +\alpha_2 \in \Delta^+$. Then, by standard root spaces relations (see Section \ref{sect:prelLie}), we have
\begin{align*}
2\,{\eta}_{\alpha_1, \alpha_2} \nabla V_{\alpha_3} &= \nabla \big(T(V_{\alpha_1}, V_{\alpha_2})\big) -\nabla \big(T(JV_{\alpha_1}, JV_{\alpha_2})\big) \\
&= T(\nabla V_{\alpha_1}, V_{\alpha_2}) + T(V_{\alpha_1},\nabla V_{\alpha_2}) -T(J\nabla V_{\alpha_1}, JV_{\alpha_2}) -T(JV_{\alpha_1}, J\nabla V_{\alpha_2}) \\
&= (\theta_{\alpha_1} +\theta_{\alpha_2}) \otimes \big(T(V_{\alpha_1}, JV_{\alpha_2}) -T(V_{\alpha_2}, JV_{\alpha_1})\big) \\
& = 2\,{\eta}_{\alpha_1, \alpha_2}\, (\theta_{\alpha_1} +\theta_{\alpha_2}) \otimes JV_{\alpha_3}
\end{align*}
which implies
\begin{equation} \label{eq:thetasum}
\theta_{\alpha_3} = \theta_{\alpha_1} +\theta_{\alpha_2} \,\, .
\end{equation}

Notice now that the curvature tensor $R$ can be locally computed by means of the connection form $\theta$ using the Cartan structure equation $R_{\,\cdot,\cdot} = {\rm d}\theta - \theta\wedge\theta$. Moreover, by \eqref{eq:nabladiag}, it is straightforward to check that the above formula reduces to
\begin{equation} \label{eq:Rdtheta}
R_{\,\cdot,\cdot}V_{\alpha} = {\rm d}\theta_{\alpha} \otimes JV_{\alpha} \,\, .
\end{equation}
Consequently, by using also \eqref{eq:thetasum}, we get
\begin{align*}
R_{\,\cdot,\cdot} V_{\alpha_3} = {\rm d}\theta_{\alpha_1} \otimes JV_{\alpha_3} +{\rm d}\theta_{\alpha_2} \otimes JV_{\alpha_3}
\end{align*}
which, together with Lemma \ref{lem:almost-flat} , implies that the curvature vanishes in these directions. We finally remark that, in any compact simple Lie algebra of rank at least $2$, any positive root is the sum or the difference of two positive roots and so fits in a triple like the one above, concluding the proof.
\end{proof}

It follows from Proposition \ref{prop:trick} that $\mathcal{T} +\mathcal{P}$ decomposes as a sum of $s$ non-Bismut flat Lie algebra subbundles $\mathcal{A}_1^{(i)}$, whose typical fiber is $\mathfrak{su}(2)$, and a Bismut flat Lie algebra subbundle $\mathcal{K}$ with typical fiber $\mathfrak{k}$, that is
\begin{equation} \label{eq:finaldec}
\mathcal{G} = \mathcal{Z} + \sum_{i=1}^{s} \mathcal{A}_1^{(i)} + \mathcal{K} \; .
\end{equation}

By construction, the torsion $T$ splits accordingly to \eqref{eq:finaldec}. Moreover, since the decomposition \eqref{eq:finaldec} is $\nabla$-parallel, it is also $D$-parallel, and hence induces different de Rham factors. Notice that the distribution $\mathcal{Z}$ is $D$-flat, while the other factors are characterized as follows.

\begin{lemma} \label{lem:sasaki}
Any distribution $\mathcal{A}_1^{(i)}$, for $i=1,\ldots,s$, corresponds to a de Rham factor $S_i$ which is a $3$-dimensional Sasaki manifold.
\end{lemma}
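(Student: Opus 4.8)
The plan is to combine de Rham's splitting theorem with an explicit description of the structure induced on the $3$-dimensional factor. First I would observe that, since the decomposition \eqref{eq:finaldec} is $\nabla$-parallel and $T$ splits accordingly, it is $D$-parallel as well; as $M$ is complete and simply-connected, de Rham's theorem produces a Riemannian splitting with one factor $S_i$ whose tangent bundle is $\mathcal{A}_1^{(i)}$. Thus $S_i$ is complete, simply-connected, of real dimension $\dim_{\mathbb{R}}\mathfrak{su}(2)=3$, its Levi-Civita connection is the restriction of $D$, and $\nabla$ restricts to the metric connection with totally skew-symmetric torsion $\nabla|_{S_i}=D|_{S_i}+\tfrac12 T_i$, where $T_i$ denotes the $\mathcal{A}_1^{(i)}$-component of $T$.

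Next I would note that $T_i$, being a $3$-form on the $3$-manifold $S_i$, equals $2\lambda_i\,{\rm vol}_{S_i}$ for a smooth function $\lambda_i$. Since $\nabla$ preserves $g$ and the orientation of the simply-connected manifold $S_i$, one has $\nabla\,{\rm vol}_{S_i}=0$, so $\nabla T=0$ forces $\lambda_i$ to be constant; moreover $\lambda_i\neq 0$, because the bracket of the Lie algebra bundle $\mathcal{A}_1^{(i)}$ is (up to sign) $T_i$ and its typical fiber $\mathfrak{su}(2)$ is nonabelian. Finally, choosing the global $\nabla$-parallel frame $(H_{\ell+1},{\dots},H_{\ell+q})$ of $\mathcal{T}$ adapted to \eqref{eq:finaldec}, I would single out the $\nabla$-parallel unit vector field $\xi_i:=H_{\ell+i}$, tangent to $S_i$, spanning $\mathcal{T}\cap\mathcal{A}_1^{(i)}$.

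Then I would build the almost contact metric structure on $S_i$: set $\eta_i:=g(\xi_i,\cdot\,)$ and let $\phi_i\in{\rm End}(TS_i)$ be the skew-symmetric endomorphism defined by $g(\phi_i X,Y):={\rm vol}_{S_i}(\xi_i,X,Y)$, equivalently $2\lambda_i\, g(\phi_i X,Y)=(\xi_i\,\lrcorner\, T_i)(X,Y)$ (so $2\lambda_i\phi_i$ is the restriction of the endomorphism field $H_{\ell+i}\,\lrcorner\,T|_{\mathcal{P}}$). A routine check gives $\phi_i\xi_i=0$, $\phi_i^2=-{\rm Id}+\eta_i\otimes\xi_i$ and $g(\phi_i\,\cdot\,,\phi_i\,\cdot\,)=g-\eta_i\otimes\eta_i$. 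From $\nabla_X\xi_i=D_X\xi_i+\tfrac12 T(X,\xi_i)=0$ together with $T_i=2\lambda_i\,{\rm vol}_{S_i}$ one deduces $D_X\xi_i=\lambda_i\,\phi_i X$, so $\xi_i$ is Killing and ${\rm d}\eta_i(X,Y)=g(D_X\xi_i,Y)-g(D_Y\xi_i,X)=2\lambda_i\, g(\phi_i X,Y)$; hence $\eta_i$ is a contact form with Reeb vector field $\xi_i$ and $g$ is a compatible metric whose Reeb field is Killing, i.e.\ $S_i$ is $K$-contact. Invoking the classical fact that in real dimension $3$ a $K$-contact manifold is Sasakian then yields that $S_i$ is a Sasaki $3$-manifold.

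The only genuinely delicate points are of bookkeeping nature: keeping track of the signs in the almost contact identities for $\phi_i$, and reconciling the constant $\lambda_i$ with the standard normalization ${\rm d}\eta=2\,g(\,\cdot\,,\phi\,\cdot\,)$ of contact metric (hence Sasakian) structures — which, if one insists on the literal normalization, is achieved by a constant homothety of $g|_{S_i}$ and a choice of orientation of $S_i$. The substantive geometric content, namely that a complete simply-connected $3$-manifold carrying a metric connection with nonzero $\nabla$-parallel skew torsion is Sasakian, then follows immediately from the dimension-$3$ equivalence of the $K$-contact and Sasakian conditions.
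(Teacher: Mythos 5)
Your proof is correct and follows essentially the same route as the paper's: both hinge on the $\nabla$-parallel unit field $H_{\ell+i}$ being Killing with $D_{\cdot}H_{\ell+i}=\tfrac12 T(H_{\ell+i},\cdot\,)$ and on the nonvanishing of the $\mathcal{A}_1^{(i)}$-component of $T$ (coming from $\mathfrak{su}(2)$ being nonabelian) to get the contact condition. The only cosmetic differences are that you phrase the transverse endomorphism via the volume form and close with the dimension-$3$ equivalence of $K$-contact and Sasakian, and that you explicitly flag the homothety needed to match the standard normalization, which the paper absorbs into its choice of generators $(E_i,F_i)$ with $T(H_{\ell+i},E_i)=-2F_i$.
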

\begin{proof} Fix an index $i=1,\ldots,s$. By construction, $S_i$ is a complete Riemannian manifold of dimension $3$ endowed with a distinguished complete, $\nabla$-parallel vector field $H_{\ell +i}$. Since $\nabla_{\cdot} H_{\ell +i} = 0$, it is straightforward to check that it is Killing and
\begin{equation} \label{eq:DHsec4}
D_{\cdot\,} H_{\ell +i} = \frac12\, T(H_{\ell +i},{\cdot}\,) \,\, ,
\end{equation}
which in turn implies that
\begin{equation} \label{eq:DHsec4'}
D^2_{\cdot,\cdot} H_{\ell +i} = \frac{1}{4}T(T(H_{\ell +i},\cdot),\cdot) \,\, .
\end{equation}
Moreover, since $\mathcal{A}_1^{(i)}$ is a Lie algebra bundle whose typical fiber is $\mathfrak{su}(2)$, it follows that $H_{\ell +i}^{\perp} \cap \mathcal{A}_1^{(i)}$ admits local unitary generators $(E_i, F_i)$ satisfying
\begin{equation} \label{eq:THsec4}
T(H_{\ell +i},E_i) = -2c_iF_i \,\, , \quad T(F_i,H_{\ell +i}) = -2c_iE_i \,\, , \quad T(E_i,F_i) = -2c_iH_{\ell +i} \,\, ,
\end{equation}
for some $c_i >0$. Therefore, \eqref{eq:DHsec4'} and \eqref{eq:THsec4} imply that $H_{\ell +i}$ verifies \eqref{eq:Sasakidef}, and so $S_i$ is a Sasaki manifold with Reeb vector field $H_{\ell +i}$.
\end{proof}

\begin{lemma} \label{lem:Sam}
The distribution $\mathcal{K}$ corresponds to a compact semisimple Lie group $\mathsf{K}$ equipped with a bi-invariant metric.
\end{lemma}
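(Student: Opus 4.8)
The plan is to recognise the de Rham factor carved out by $\mathcal{K}$ as a compact semisimple Lie group with a bi-invariant metric by means of the Cartan--Schouten-type classification of Bismut-flat manifolds. First I would observe that $\mathcal{K}$ is not only $\nabla$-parallel, by Proposition \ref{prop:par}, but also $D$-parallel: the fibers of $\mathcal{Z}$, of the $\mathcal{A}_1^{(i)}$'s and of $\mathcal{K}$ are pairwise-commuting ideals of $\mathfrak{g}$, so $T$ splits as a sum of $3$-forms along the summands of \eqref{eq:finaldec}, and from $D_XY = \nabla_XY -\tfrac12 T(X,Y)$ one reads off that $D$ preserves $\mathcal{K}$, hence also $\mathcal{K}^{\perp}$. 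Since $M$ is complete and simply-connected, the de Rham decomposition theorem then produces a global Riemannian product $M = N \times M'$ with $TN = \mathcal{K}$; the factor $N$ is complete and simply-connected, and on it the Bismut connection of $M$ restricts to a metric connection $\nabla^{N} = D^{N} + \tfrac12 T|_{N}$ with totally skew-symmetric torsion $T|_N$, where $D^{N}$ is the Levi-Civita connection of $(N, g|_N)$. By the very construction of the summand $\mathcal{K}$ in \eqref{eq:finaldec} the curvature $R$ vanishes on $\mathcal{K}$, so $\nabla^{N}$ is flat; and $\nabla^{N} T|_N = 0$ since $\nabla T = 0$ and $\mathcal{K}$ is $\nabla$-parallel.

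At this point I would invoke \cite[Theorem 2.2]{MR2651537}: a complete, simply-connected Riemannian manifold admitting a flat metric connection with parallel skew-symmetric torsion is a Lie group endowed with a bi-invariant metric, the connection being one of its two flat Cartan connections and the torsion being, up to sign, the Lie bracket. Applied to $(N, g|_N, \nabla^{N})$ this shows that $N$ is a simply-connected Lie group with a bi-invariant metric whose Lie algebra is, up to sign, the typical fiber $\mathfrak{k}$ of $\mathcal{K}$; as $\mathfrak{k}$ is compact and semisimple, so is this Lie group, which we call $\mathsf{K}$. For a self-contained alternative one can develop instead: flatness of $\nabla^{N}$ together with simple connectivity of $N$ furnish a global $\nabla^{N}$-parallel orthonormal coframe $(e^a)$, which by the identity ${\rm d}e^a(X,Y) = e^a(T(X,Y))$ satisfies a Maurer--Cartan equation for $\mathfrak{k}$; Cartan's theorem on Lie-group-valued primitives then produces a local isometry $N \to \mathsf{K}$ intertwining $\nabla^{N}$ with the Cartan connection, which is a Riemannian covering by completeness and hence an isometry, both manifolds being simply-connected.

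The main obstacle here is not conceptual but a matter of keeping the bookkeeping honest: one must check carefully that restricting the ambient Bismut connection to the de Rham factor $N$ really yields, on $N$ in its own right, a flat metric connection with $\nabla^{N}$-parallel totally skew-symmetric torsion, i.e.\ that nothing is lost in passing from the Lie algebra bundle $(\mathcal{K}, T|_{\mathcal{K}}, \mathfrak{k})$ over $M$ to the Riemannian geometry of the leaf $N$, and that the Lie group delivered by the classification indeed has the prescribed Lie algebra $\mathfrak{k}$. One should also point out that the closedness ${\rm d}T = 0$, which forced $T$ to satisfy the Jacobi identity, is exactly what excludes the exceptional round $S^7$ from the list of Bismut-flat manifolds, so that the conclusion really is a Lie group.
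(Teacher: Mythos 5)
Your proposal is correct and follows essentially the same route as the paper: restrict the Bismut connection to the de Rham factor determined by $\mathcal{K}$, observe that it is a flat metric connection with parallel totally skew-symmetric torsion, and invoke \cite[Theorem 2.2]{MR2651537}, with ${\rm d}T=0$ (equivalently $\sigma_T=0$) ruling out the round $S^7$ factors. The only cosmetic difference is that the paper first splits $\mathcal{K}$ along the simple ideals of $\mathfrak{k}$ and applies the classification factor by factor, whereas you apply it to the whole factor at once; both are fine.
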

\begin{proof}
Let us denote by $\mathsf{K}$ the complete Riemannian manifold corresponding to distribution $\mathcal{K}$. By construction, the Bismut connection $\nabla$ of $(M,J,g)$ restricts to a flat Riemannian connection with skew-symmetric torsion on $\mathsf{K}$, that we will still denote by $\nabla$. Let now $\mathcal{K} = \mathcal{K}_1 + {\dots} + \mathcal{K}_p$ be the decomposition of the Lie algebra bundle $\mathcal{K}$ corresponding to the splitting of $\mathfrak{k}$ into simple ideals $\mathfrak{k}_1, {\dots}, \mathfrak{k}_p$. Being $\nabla$-flat, each $\mathcal{K}_i$ is also Riemannian-irreducible, and so $\mathsf{K}$ splits as a product of complete Riemannian manifolds
$$
\mathsf{K} = \mathsf{K}_1 \times {\dots} \times \mathsf{K}_p \,\, .
$$
Therefore, we can apply \cite[Theorem 2.2]{MR2651537}. In particular, by \eqref{eq:BKL}, we fall into the case $\sigma_T=0$ in the above-mentioned theorem. Consequently, we can conclude that each $\mathsf{K}_i$ is a compact, simple Lie group and the induced metric is bi-invariant.
\end{proof}

We are finally ready to gather all the results needed to obtain Theorem \ref{thm:MAIN-A}.

\begin{proof}[Proof of Theorem \ref{thm:MAIN-A}]
Let $(M^{2n}, J, g)$ be a complete, simply-connected Hermitian manifold. If it decomposes as a product of Hermitian irreducible factors, each of them is either K\"ahler or a Riemannian product as in \eqref{eq:usualproduct} endowed with a standard complex structure, then $(M^{2n}, J, g)$ is pluriclosed with parallel Bismut torsion by Theorem \ref{thm:standard-BKL}. On the other hand, if $(M^{2n}, J, g)$ is pluriclosed with parallel Bismut torsion, then each Hermitian irreducible non-K\"ahler factor of $M$ is of the form \eqref{eq:usualproduct} by \eqref{eq:finaldec}, Lemma \ref{lem:sasaki} and Lemma \ref{lem:Sam}. Moreover, the complex structure $J$ on each non-K\"ahler factor is standard (see Definition \ref{def:standard}). Indeed, it is compatible with the transverse complex distribution on each Sasaki factor and it projects onto a $\mathsf{K}$-invariant complex structure on the full flag manifold $\mathsf{K}/\mathsf{T}$ (see \eqref{eq:quasifinaldec}, \eqref{eq:finaldec} and Lemma \ref{lem:sasaki}). Finally, it preserves the distribution $\mathcal{Z} +\mathcal{T}$ and it acts on the global generators $(H_1,{\dots},H_{2m})$ as in \eqref{eq:JH}, since all the vector fields $H_i$, $JH_i$ are $\nabla$-parallel.
\end{proof}

\medskip
\section{Classification in low dimensions}
\label{sect:low-dim} \setcounter{equation} 0

In this section, we show how Theorem \ref{thm:MAIN-A} can be used to recover all the previous classification results in low dimension. We start with the $2$-dimensional case, where it is known that the BKL condition is equivalent to being Vaisman. Before stating the result, let us recall the definition of a Vaisman metric. Given a Hermitian manifold $(M^{2n},J,g)$, its {\it Lee form} is the $1$-form $\phi$ uniquely determined by the equation
$$
{\rm d}\omega^{n-1} = \omega^{n-1} \wedge \phi \,\, ,
$$
where $\omega = g(J\cdot\,,\cdot\,)$. Then, the Hermitian metric $g$ is said to be {\it Vaisman} if $D\phi = 0$ and satisfies
$$
{\rm d}\omega = \frac{1}{n-1}\, \omega \wedge \phi \,\, .
$$

\begin{theorem}[see Theorem 2 in \cite{MR4554474}]
Let $(M^4,J)$ be a complex surface and $g$ a complete Hermitian metric on it. If $(M^4, J, g)$ is BKL, then the metric $g$ is Vaisman.
\end{theorem}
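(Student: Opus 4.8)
The plan is to derive the statement from Theorem~\ref{thm:MAIN-A} by a dimension count followed by one short explicit computation, after reducing to the simply-connected case. First I would pass to the universal Riemannian cover $(\widetilde M,\widetilde J,\widetilde g)$ of $(M^{4},J,g)$: since the BKL conditions \eqref{eq:BKL} are local, $\widetilde M$ is again a complete, complex two-dimensional BKL manifold. Moreover the Lee form is natural under a Riemannian covering, and the two conditions $D\phi=0$ and ${\rm d}\omega=\tfrac{1}{n-1}\,\omega\wedge\phi$ defining the Vaisman property are tensorial and invariant under local isometries; hence it is enough to show that $\widetilde g$ is Vaisman, so from now on I assume $M$ simply-connected.

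Next I would invoke Theorem~\ref{thm:MAIN-A}, which presents $M$ as a product of Hermitian irreducible factors, each either K\"ahler or of the form \eqref{eq:usualproduct} endowed with a standard complex structure. A factor of the second type that is itself a complex manifold has real dimension $\ell+3s+\dim\mathsf{K}$ with $\ell\le s+r$, and the constraint $\ell\le s+r$ together with the evenness of the real dimension forces this dimension to be at least $4$. Hence, in complex dimension $2$, either all factors are K\"ahler --- so $M$ is K\"ahler --- or there is a single factor of type \eqref{eq:usualproduct} equal to all of $M$. A direct inspection of the cases $s\ge 2$, $s=1$, $s=0$ then shows that the only such $4$-dimensional factors are $\mathbb{R}\times S_{1}$, with $S_{1}$ a complete Sasaki $3$-manifold, and $\mathbb{R}\times\mathsf{SU}(2)$; since $\mathsf{SU}(2)\cong S^{3}$ with a bi-invariant metric is itself a Sasaki $3$-manifold, in both cases $M=\mathbb{R}\times S$ with $S$ a complete Sasaki $3$-manifold and $J$ standard.

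In the K\"ahler case, ${\rm d}\omega=0$ and, since $\omega\wedge(\cdot):\Lambda^{1}\to\Lambda^{3}$ is injective in real dimension $4$, the defining equation ${\rm d}\omega=\omega\wedge\phi$ gives $\phi=0$, whence $g$ is (degenerately) Vaisman. In the case $M=\mathbb{R}\times S$, let $t$ be the Euclidean coordinate, so that $\partial_{t}$ is a unit $D$-parallel field, let $\xi$ be the Reeb field of $S$ and $\eta\coloneqq\xi^{\flat}$ its contact form. Since $m=1$, the matrix $A\in\mathsf{U}(1)\subset\mathsf{SO}(2)$ in \eqref{eq:JH} satisfies $A^{2}=-{\rm Id}$, so $J\partial_{t}=\pm\xi$, and after possibly replacing $t$ by $-t$ I may assume $J\partial_{t}=\xi$. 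Taking a local transverse unitary frame $(E,F)$ on $S$, \eqref{eq:diffcofSas} gives ${\rm d}\eta=2\,E^{*}\wedge F^{*}$, hence
\[
\omega={\rm d}t\wedge\eta+\tfrac12\,{\rm d}\eta,\qquad {\rm d}\omega=-\,{\rm d}t\wedge{\rm d}\eta,\qquad \omega\wedge{\rm d}t=\tfrac12\,{\rm d}t\wedge{\rm d}\eta .
\]
Comparing the first two identities with the third shows ${\rm d}\omega=\omega\wedge(-2\,{\rm d}t)$, so the Lee form is $\phi=-2\,{\rm d}t$; it is $D$-parallel because $\partial_{t}$ is, and ${\rm d}\omega=\tfrac{1}{n-1}\,\omega\wedge\phi$ since $n-1=1$. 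Therefore $\widetilde g$, and hence $g$, is Vaisman.

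The only real input here is Theorem~\ref{thm:MAIN-A}; granting it, the remaining argument is the dimension count above and the computation in the $\mathbb{R}\times S$ case, which is essentially already contained in the proof of Proposition~\ref{prop:T}. The two points deserving a little care are the descent of the Vaisman property along the universal cover and the fact that the K\"ahler case enters only through the degenerate choice $\phi=0$ in the definition of a Vaisman metric --- so I do not expect any substantial obstacle beyond Theorem~\ref{thm:MAIN-A} itself.
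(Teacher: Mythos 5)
Your proposal is correct and follows essentially the same route as the paper: invoke Theorem~\ref{thm:MAIN-A} to identify the universal cover as $\mathbb{R}\times S$ with $S$ a Sasaki $3$-manifold (the paper, like you, treats $\mathsf{SU}(2)$ with a bi-invariant metric as a Sasaki $3$-manifold), and then compute ${\rm d}\omega=\omega\wedge\phi$ with $\phi$ a parallel multiple of the covector dual to the flat direction. Your additional care with the descent of the Vaisman condition along the covering, the dimension count, and the degenerate K\"ahler case is sound and merely makes explicit what the paper leaves implicit.
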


\begin{proof}
Denote by $\phi$ the Lee form of $(M,J,g)$. By Theorem \ref{thm:MAIN-A}, the Hermitian universal cover of $(M,J,g)$ is holomorphically isometric to the product $\mathbb{R} \times S$, where $S$ is a Sasaki $3$-manifold, endowed with a standard complex structure according to Definition \ref{def:standard}. More precisely, if we denote by $H$ the Reeb vector field of $S$, then $JH$ is tangent to the factor $\mathbb{R}$ and we can consider local orthonormal generators $(E,JE)$ of the orthogonal complement $H^{\perp} \cap TS$. Following \eqref{eq:diffcofSas} and \eqref{eq:diffcofRell}, the differential of the corresponding covectors $E^*,JE^*, H^*, JH^*$ are given by
$$\begin{aligned}
&{\rm d}H^* = 2c E^* \wedge JE^* \,\, , \\
&{\rm d}JH^* = 0 \,\, , \\
&{\rm d}E^* = cJE^* \wedge H^* -\vartheta \wedge JE^* \,\, ,\\
&{\rm d}JE^* = -cE^* \wedge H^* +\vartheta \wedge E^* \,\, ,
\end{aligned}$$
where $\vartheta$ is a local $1$-form on $S$ and $c>0$. Therefore, the corresponding fundamental $2$-form $\omega$ can be locally written as
$$
\omega = E^*\wedge JE^* + H^*\wedge JH^*
$$
and so
$$
{\rm d}\omega = 2c E^*\wedge JE^* \wedge JH^* = 2c\, \omega \wedge JH^* \,\, ,
$$
which means that $2c\,JH^*$ is the pullback of the Lee form $\phi$ on the universal cover. Since $JH^*$ is $D$-parallel, this implies that $(M,J,g)$ is a Vaisman surface.
\end{proof}

We now move to the classification of BKL manifolds in complex dimensions $3$, $4$, and $5$, obtained in \cite{MR4577328} and \cite{ZZ4-5}. Before doing this, we recall that, according to their terminology, a BKL manifold is {\it full} if and only if its Hermitian universal cover admits no K\"ahler de Rham factors. It is then reasonable to classify full BKL manifolds, since any BKL manifold splits as the product of a K\"ahler manifold and a full BKL manifold. By Theorem \ref{thm:MAIN-A} and the classification of compact, simply-connected simple Lie groups (see {\it e.g.\ }\cite[Chapter V]{MR0781344}), we recover the following

\begin{theorem}[see Theorem 7 in \cite{MR4577328}, Theorem 4 and Theorem 5 in \cite{ZZ4-5}]
Let $(M^{2n},J,g)$ be a complete, full, BKL manifold of complex dimension $n$ and $(\widetilde{M},J,g)$ its Hermitian universal cover.
\begin{itemize}
\item[i)] If $n=3$, then $(\widetilde{M},J,g)$ is holomorphically isometric to the product $S_1 \times S_2$ of two Sasaki $3$-dimensional manifolds, equipped with a standard complex structure.
\item[ii)] If $n=4$, then $(\widetilde{M},J,g)$ is holomorphically isometric to $\mathsf{SU}(3)$ with a Bismut flat Hermitian structure or the product $\mathbb{R}^2 \times S_1 \times S_2$, equipped with a standard complex structure.
\item[iii)] If $n=5$, then $(\widetilde{M},J,g)$ is holomorphically isometric to $\mathsf{Spin}(5)$ or $\mathsf{SU}(3) \times \mathbb{R}^2$, equipped with a Bismut flat Hermitian structure, or the product $\mathbb{R} \times S_1 \times S_2 \times S_3$, equipped with a standard complex structure.
\end{itemize}
\end{theorem}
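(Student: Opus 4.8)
The plan is to read the statement off Theorem~\ref{thm:MAIN-A} via a finite case analysis on dimensions, so essentially no new geometric input is needed. First I would apply Theorem~\ref{thm:MAIN-A} to the Hermitian universal cover $(\widetilde{M},J,g)$: it splits as a product of Hermitian irreducible factors, each K\"ahler or of the form~\eqref{eq:usualproduct}. Since $(M,J,g)$ is full, no K\"ahler factor occurs; regrouping the Euclidean, Sasaki and Lie group pieces of the remaining factors, $(\widetilde{M},J,g)$ becomes holomorphically isometric to $\mathbb{R}^{\ell}\times\prod_{i=1}^{s}S_i\times\mathsf{K}$ with a standard complex structure, where $\mathsf{K}$ is compact semisimple of rank $r$ with a bi-invariant metric and $\ell\leq s+r$ (this last inequality being the sum of the per-factor ones). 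To remove the harmless ambiguity that $\mathsf{SU}(2)\cong S^{3}$ with a bi-invariant metric may be read either as a Sasaki $3$-manifold or as a simple factor of $\mathsf{K}$, I would first normalise by absorbing every $\mathfrak{su}(2)$-summand of $\Lie(\mathsf{K})$ into the Sasaki factors; from then on every simple ideal of $\Lie(\mathsf{K})$ has dimension at least $8$.

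The rest is combinatorics. One has $2n=\ell+3s+\dim\mathsf{K}$ together with $\ell\leq s+r$ and $\ell,s\geq 0$, and the compact simple Lie algebras of smallest dimension are $\mathfrak{su}(2)$ (dimension $3$, now excluded), $\mathfrak{su}(3)$ (dimension $8$, rank $2$) and $\mathfrak{sp}(2)\cong\mathfrak{so}(5)$ (dimension $10$, rank $2$); since $2n\leq 10$ in our range, $\mathsf{K}$ is either trivial or a single simple factor of dimension $8$ or $10$. For $n=3$ this forces $\mathsf{K}$ trivial and $\ell+3s=6$ with $\ell\leq s$, hence $(s,\ell)=(2,0)$. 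For $n=4$ one gets either $\mathsf{K}=\mathsf{SU}(3)$ with $\ell=s=0$, or $\mathsf{K}$ trivial with $(s,\ell)=(2,2)$. For $n=5$ one gets $\mathsf{K}=\mathsf{Spin}(5)$ with $\ell=s=0$, or $\mathsf{K}=\mathsf{SU}(3)$ with $s=0$ and $\ell=2$, or $\mathsf{K}$ trivial with $(s,\ell)=(3,1)$. In each case I would then read off the model claimed in the statement, also recording two small points: $\widetilde{M}$ is simply connected, so the compact group appearing is the simply-connected one attached to its Lie algebra; and when $s=0$ the space $\mathbb{R}^{\ell}\times\mathsf{K}$ is a Samelson space, hence Bismut flat, because the Bismut connection restricts to $D+\tfrac12 B$ on $\mathsf{K}$, which is flat exactly as in the proof of Theorem~\ref{thm:standard-BKL}. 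This is what justifies the wording ``Bismut flat Hermitian structure'' in parts~(ii) and~(iii).

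I do not expect any genuinely hard step, since all the geometry is packaged in Theorem~\ref{thm:MAIN-A}; the delicate points are purely bookkeeping. The main one is invoking correctly the classification of low-dimensional compact simple Lie algebras — that dimensions $8$ and $10$ are realised only by $A_{2}$ and $B_{2}=C_{2}$, and that $\mathfrak{g}_{2}$ (dimension $14$) as well as any sum of $\mathfrak{su}(2)$'s together with a further simple ideal are already too large — so as to be sure the list of admissible triples $(\ell,s,\mathsf{K})$ is complete. Making the $\mathsf{SU}(2)$-versus-Sasaki normalisation explicit beforehand is precisely what keeps this list from producing spurious-looking duplicates. Finally, checking that each listed model actually carries a standard complex structure is immediate from Definition~\ref{def:standard}, since standardness is inherited when the irreducible factors are regrouped.
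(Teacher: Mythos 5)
Your proposal is correct and takes essentially the same route as the paper, which proves the statement simply by invoking Theorem~\ref{thm:MAIN-A} together with the classification of compact, simply-connected simple Lie groups and leaves the dimension bookkeeping implicit. Your case analysis — including the normalisation that treats $\mathsf{SU}(2)$ as a Sasaki $3$-manifold, which the paper records in a separate remark — supplies exactly the details the paper omits, and the resulting list of triples $(\ell,s,\mathsf{K})$ matches the statement.
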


With the same approach, one can also write down the possibilities in higher dimensions. For example, we list here the cases in complex dimension $6$.

\begin{theorem}
Let $(M^{12},J,g)$ be a complete, full, BKL manifold of complex dimension $6$. Then, its Hermitian universal cover $(\widetilde{M},J,g)$ is holomorphically isometric to one of the following:
\begin{itemize}
\item[i)] $\mathbb{R}^2 \times \mathsf{Spin}(5)$, equipped with a Bismut flat Hermitian structure;
\item[ii)] $\mathbb{R} \times S \times \mathsf{SU}(3)$, where $S$ is a Sasaki $3$-dimensional manifold,  equipped with a standard complex structure;
\item[iii)] the product $S_1 \times S_2 \times S_3 \times S_4$ of four Sasaki $3$-dimensional manifolds, equipped with a standard complex structure;
\item[iv)] the product $\mathbb{R}^3 \times S_1 \times S_2 \times S_3$ of three Sasaki $3$-dimensional manifolds and $\mathbb{R}^3$, equipped with a standard complex structure.
\end{itemize}
\end{theorem}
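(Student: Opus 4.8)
The plan is to deduce this statement directly from Theorem~\ref{thm:MAIN-A}, combined with the classification of compact simple Lie groups of small dimension; the argument then reduces to a finite enumeration of admissible dimensions and ranks, with no substantial analytic content left to supply.

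First I would invoke Theorem~\ref{thm:MAIN-A}. Since $(M^{12},J,g)$ is full, its Hermitian universal cover $(\widetilde{M},J,g)$ has no K\"ahler de Rham factor, so grouping together the irreducible factors provided by that theorem shows that $(\widetilde{M},J,g)$ is holomorphically isometric to a Riemannian product $\mathbb{R}^{\ell}\times\prod_{i=1}^{s}S_{i}\times\mathsf{K}$ endowed with a standard complex structure, where each $S_{i}$ is a complete simply-connected Sasaki $3$-manifold, $\mathsf{K}$ is a compact simply-connected semisimple Lie group of rank $r$ with a bi-invariant metric, and $\ell\leq s+r$. Next I would normalise this presentation by absorbing every $\mathfrak{su}(2)$-summand of $\mathfrak{k}=\Lie(\mathsf{K})$ into the Sasaki part $\prod_{i}S_{i}$: indeed $\mathsf{SU}(2)$ with a bi-invariant metric is the round $S^{3}$, a Sasaki $3$-manifold on which the standard complex structure coincides with the one coming from its transverse structure. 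After this reduction $\mathsf{K}$ is a product of compact simply-connected simple Lie groups of rank at least $2$, or trivial.

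The core of the proof is then bookkeeping. Equating real dimensions gives $\ell+3s+\dim\mathsf{K}=12$, hence $\dim\mathsf{K}\leq 12$; consulting the list of compact simply-connected simple Lie groups, the only ones of rank at least $2$ and dimension at most $12$ are $\mathsf{SU}(3)$ (dimension $8$, rank $2$) and $\mathsf{Spin}(5)=\mathsf{Sp}(2)$ (dimension $10$, rank $2$), the next being $\mathsf{G}_{2}$ of dimension $14$, while a product of two or more such factors already has dimension at least $16$. Therefore $\mathsf{K}\in\{\{e\},\mathsf{SU}(3),\mathsf{Spin}(5)\}$, with $r$ equal to $0$, $2$, $2$ respectively, and it remains to solve $\ell+3s+\dim\mathsf{K}=12$ with $\ell\leq s+r$ and $\ell,s\geq 0$. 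When $\mathsf{K}=\mathsf{Spin}(5)$, the equation $\ell+3s=2$ forces $s=0$ and $\ell=2$ (and $\ell=2\leq s+r=2$); as there is no Sasaki factor, $\widetilde{M}=\mathbb{R}^{2}\times\mathsf{Spin}(5)$ is a Samelson space and therefore Bismut flat, which is case~(i). When $\mathsf{K}=\mathsf{SU}(3)$, one has $\ell+3s=4$, so $(s,\ell)\in\{(0,4),(1,1)\}$; the first is excluded because $\ell=4\not\leq s+r=2$, while the second gives $\mathbb{R}\times S\times\mathsf{SU}(3)$, which is case~(ii). When $\mathsf{K}=\{e\}$, the equation $\ell+3s=12$ together with $\ell\leq s$ forces $12\leq 4s$ and $\ell=12-3s\geq 0$, hence $s\in\{3,4\}$, giving the products $\mathbb{R}^{3}\times S_{1}\times S_{2}\times S_{3}$ and $S_{1}\times S_{2}\times S_{3}\times S_{4}$, which are cases~(iv) and~(iii). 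In each surviving case $\ell+s+r$ is even, as it must be for a standard complex structure to exist, and no further possibility occurs; conversely, manifolds of each of the types (i)--(iv) genuinely arise, being BKL by Theorem~\ref{thm:standard-BKL}.

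I do not anticipate a real difficulty, since the statement is a corollary of Theorem~\ref{thm:MAIN-A}; the points that need a little attention are the normalisation moving the $\mathfrak{su}(2)$-summands of $\mathsf{K}$ into the Sasaki factors, so that the enumeration is non-redundant, and the role of the inequality $\ell\leq s+r$ in discarding the numerically admissible but non-full product $\mathbb{R}^{4}\times\mathsf{SU}(3)$. One should also check that the list of compact simple Lie groups of dimension at most $12$ is complete: $\mathsf{SU}(2)$ of dimension $3$, $\mathsf{SU}(3)$ of dimension $8$, and $\mathsf{Spin}(5)=\mathsf{Sp}(2)$ of dimension $10$, with $\mathsf{Spin}(4)=\mathsf{SU}(2)\times\mathsf{SU}(2)$ of dimension $6$ being semisimple but not simple.
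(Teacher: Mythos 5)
Your proof is correct and follows exactly the route the paper intends for this statement, which is stated without explicit proof as an instance of the "same approach" used for dimensions $3$--$5$: invoke Theorem~\ref{thm:MAIN-A}, absorb the $\mathfrak{su}(2)$-summands into the Sasaki factors (as in the paper's own remark on treating $\mathsf{SU}(2)$ as a Sasaki $3$-manifold), and enumerate solutions of $\ell+3s+\dim\mathsf{K}=12$ subject to $\ell\leq s+r$ using the classification of compact simple Lie groups. Your bookkeeping, including the exclusion of $\mathbb{R}^{4}\times\mathsf{SU}(3)$ via the fullness constraint $\ell\leq s+r$, is complete and accurate.
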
 

\begin{remark}
For the sake of clarity, we stress the following fact. Differently from Section \ref{sect:classification}, in order to simplify the statements of the theorems in this section, we address the homogeneous manifold $\mathsf{SU}(2)$ as a Sasaki $3$-dimensional manifold.
\end{remark}

Notice that the first example of complete, simply-connected, full, non-K\"ahler and non Bismut flat, BKL manifold which is not just the product of Sasaki $3$-manifolds and Euclidean factors, namely $\mathsf{SU}(3) \times S \times \mathbb{R}$, appears in complex dimension $6$. This example shows that, in \cite{MR4577328, MR4554474, ZZ4-5}, the authors find only BKL manifolds that were either Bismut flat or products $\mathbb{R}^{\ell} \times \Pi_i S_i$ because of dimensional reasons. On the contrary, by Proposition \ref{prop:trick}, the presence of non Bismut flat Sasaki components of dimension $d > 3$ on these manifolds is obstructed by geometric reasons (see also \cite[Proposition 5.6]{MR4733370}). \smallskip

For the convenience of the reader, we point out that, in \cite{MR4577328, MR4554474, ZZ4-5}, the authors conduct their analysis distinguishing different cases by means of the rank $r_B$ of a suitable bilinear form (see \cite[Definition 2]{ZZ4-5}). In our notation, $r_B$ coincides with the quantity
\begin{equation} \label{eq:rB}
r_B = \frac{1}{2}\big(\dim_\mathbb{R} M -\dim(\mathfrak{t} +\mathfrak{z})\big) = \frac{1}{2}(2n -(\ell +s +r)) = n-m \,\, ,
\end{equation}
where $n = \dim_{\mathbb{C}} M$ and the numbers $\ell, s, r, m \in \mathbb{N}$ are as in the beginning of Section \ref{sect:if-part} (see \eqref{eq:standardBKL}). Notice that, by Remark \ref{rem: center dimension} (where, by definition, $q=s+r$), the condition of being full implies that
$$
\frac{n}{2} \leq r_B \leq n-1 \,\, ,
$$
see also \cite[Theorem 1]{ZZ4-5}. For $n=2$ and $n=3$, the rank $r_B$ gives no additional information about the geometry of BKL manifolds. For $n \geq 4$, the case $r_B = \frac{n}{2}$ is reached when $\dim(\mathfrak{t} +\mathfrak{z}) = n$, while the case $r_B = n -1$ corresponds to $\dim(\mathfrak{t} +\mathfrak{z}) = 2$. Then, Theorem \ref{thm:MAIN-A} reduces the problem of characterizing these extremal cases to a Lie theoretical classification problem, namely, finding real compact Lie algebras of dimension $2n$ with rank $n$ and $2$, respectively. Hence, we recover the following result, originally obtained in \cite{ZZ4-5}.

\begin{theorem}[Theorem 2 and Theorem 4 in \cite{ZZ4-5}]
Let $(M^{2n},J,g)$ be a complete, full, BKL manifold of complex dimension $n \geq 4$ and $r_B$ as in \eqref{eq:rB}.
\begin{itemize}
\item[i)] If $r_B = \frac{n}{2}$, the Hermitian universal cover $(\widetilde{M},J,g)$ is holomorphically isometric to the product $S_1 \times {\dots} \times S_{r_B}$ of $r_B$ Sasaki $3$-dimensional manifolds, equipped with a standard complex structure.
\item[ii)] If $r_B = n-1$, then $(M,J,g)$ is Bismut flat.
\end{itemize}
\end{theorem}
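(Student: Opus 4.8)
The plan is to feed both statements into Theorem~\ref{thm:MAIN-A} and reduce everything to rank and dimension bookkeeping for compact Lie algebras. Passing to the Hermitian universal cover $\widetilde{M}$ (still complete and BKL, these being local conditions, and carrying no K\"ahler de Rham factor since $M$ is \emph{full}), Theorem~\ref{thm:MAIN-A} presents $\widetilde{M}$ as a single product $\widetilde{M}\cong\mathbb{R}^{\ell}\times\prod_{i=1}^{s}S_{i}\times\mathsf{K}$ with a standard complex structure, where each $S_{i}$ is a Sasaki $3$-manifold, $\mathsf{K}$ is compact semisimple of rank $r$ with all simple factors of rank $\ge 2$ (the rank-one ones being absorbed into the $S_{i}$, cf.\ \eqref{eq:finaldec} and Lemma~\ref{lem:sasaki}), and $\ell\le s+r$ by Remark~\ref{rem: center dimension}. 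Writing $\dim\mathsf{K}=r+2|\Delta^{+}|$ for a positive root system $\Delta^{+}$ of $\mathfrak{k}=\Lie(\mathsf{K})$, comparison of real dimensions gives $2n=\ell+3s+\dim\mathsf{K}$, while $\dim(\mathfrak{t}+\mathfrak{z})=\ell+s+r=2m$; subtracting these and recalling $r_{B}=n-m$ from \eqref{eq:rB}, one gets $r_{B}=s+|\Delta^{+}|$ and hence $n=m+r_{B}$.

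For part~(ii), the condition $r_{B}=n-1$ is equivalent to $m=1$, i.e.\ $\dim(\mathfrak{t}+\mathfrak{z})=\ell+s+r=2$; thus the compact Lie algebra $\mathfrak{g}=(T_{o}M,T_{o})$ has rank $2$ and dimension $2n\ge 8$. The only rank-$2$ compact Lie algebras of dimension at least $8$ are the simple ones $\mathfrak{su}(3)$, $\mathfrak{so}(5)$ and $\mathfrak{g}_{2}$ (the remaining rank-$2$ possibilities $\mathbb{R}^{2}$, $\mathbb{R}\oplus\mathfrak{su}(2)$, $\mathfrak{su}(2)\oplus\mathfrak{su}(2)$ have dimension $2$, $4$, $6$), so $\mathcal{Z}=0$, there is no $\mathfrak{su}(2)$-summand, and $\mathcal{G}=\mathcal{K}$ with simple fiber $\mathfrak{g}$. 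Proposition~\ref{prop:trick}, applied to the simple ideal $\mathfrak{g}$, then forces $\mathcal{G}$ to be $\nabla$-flat; equivalently, Lemma~\ref{lem:Sam} identifies $\widetilde{M}$ with a compact simple Lie group with bi-invariant metric, which is Bismut flat by \cite[Chapter X, Proposition 2.12]{MR1393941}. In either case $\widetilde{M}$, and hence $M$, is Bismut flat.

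For part~(i), the condition $r_{B}=\tfrac{n}{2}$ is equivalent to $m=r_{B}$, i.e.\ $\ell+s+r=2(s+|\Delta^{+}|)$, which rearranges to $\ell=s+2|\Delta^{+}|-r$. Plugging this into the fullness bound $\ell\le s+r$ forces $|\Delta^{+}|\le r$. However, a compact \emph{simple} Lie algebra of rank $\ge 2$ always has strictly more positive roots than its rank (already the simple roots together with the highest root outnumber the rank), so $\mathfrak{k}$ can contain no simple summand; therefore $\mathfrak{k}=0$, whence $r=0$, $|\Delta^{+}|=0$ and $\ell=s=m=r_{B}$. Consequently $\widetilde{M}\cong\mathbb{R}^{r_{B}}\times S_{1}\times\cdots\times S_{r_{B}}$, endowed with a standard complex structure by Theorem~\ref{thm:MAIN-A}, as claimed; note that fullness ($\mathcal{Z}$ contains no $J$-invariant subspace, cf.\ the proof of Lemma~\ref{lem:tjt}) guarantees $\mathcal{Z}\cap J\mathcal{Z}=0$, so that $J$ genuinely couples the Euclidean factor with the Reeb directions of the $S_{i}$.

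The step I expect to be the real work is part~(i): upgrading the single scalar inequality $\ell\le s+r$ coming from fullness to the rigid conclusion $\mathfrak{k}=0$, which hinges on the elementary but essential fact that a simple compact Lie algebra of rank $\ge 2$ has more positive roots than its rank, together with keeping the relations among $\ell,s,r,|\Delta^{+}|,m,n$ straight. Part~(ii) is then comparatively immediate, being just the short classification of rank-$2$ compact Lie algebras of dimension $\ge 8$ followed by one appeal to Proposition~\ref{prop:trick} (or Lemma~\ref{lem:Sam}).
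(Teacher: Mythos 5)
Your argument is correct and is essentially the paper's own: Theorem~\ref{thm:MAIN-A} reduces both cases to classifying compact Lie algebras of dimension $2n$ with rank $n$ (resp.\ rank $2$), and your positive-root count $|\Delta^+|>\mathrm{rank}$ for simple factors of rank $\ge 2$ in case (i), together with the explicit rank-$2$ list in case (ii), carries out exactly that reduction. Note only that what you (correctly) obtain in case (i) is $\mathbb{R}^{r_B}\times S_1\times\cdots\times S_{r_B}$: the Euclidean factor is forced by the dimension count $2n=\ell+3s$ with $\ell=s=r_B$, and it is missing from the statement as printed.
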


\medskip
\section{Pluriclosed CYT manifolds with parallel Bismut torsion}
\label{sect:BKL+CYT} \setcounter{equation} 0

From the classification obtained in Theorem \ref{thm:MAIN-A}, it is evident that BKL manifolds without K\"ahler de Rham factors are close to being Bismut flat manifolds. More precisely, the Bismut curvature tensor vanishes everywhere except, possibly, on the Sasaki factors $S_i$ (see Proposition \ref{prop:flatwhite} and Proposition \ref{prop:trick}). Moreover, the Reeb vector field $H_{\ell +i}$ of $S_i$ is $\nabla$-parallel for any $1 \leq i \leq s$, therefore, the Bismut curvature $R$ may be nonzero only on the $2$-dimensional transverse Sasaki distribution $H_{\ell +i}^{\perp} \cap TS_i$ (see Lemma \ref{lem:almost-flat} ). By dimensional reasons, it follows that the Bismut Ricci form \eqref{eq:BRicci} carries all the information about the curvature. \smallskip

Thanks to this basic idea, we recover \cite[Theorem 3, Remark 1]{ZZ4-5} and \cite[Theorem 1.1]{brienza2024cyt}, concerning the sign of the Bismut Ricci form. For convenience, we recall that $(M,J,g)$ is said to be {\it Calabi--Yau with torsion} ({\it CYT} for short) if its Bismut Ricci form $\rho$ vanishes.

\begin{theorem}[see Theorem 3 and Remark 1 in \cite{ZZ4-5}, Theorem 1.1 in \cite{brienza2024cyt}] \label{thm:univcoverBKLCYT}
Let $(M, J, g)$ be a complete BKL manifold and $(\widetilde{M}, J, g)$ its Hermitian universal cover. Denote by $\rho$ the Bismut Ricci form of $(M, J, g)$. \begin{itemize}
\item[i)] If $(M, J, g)$ is CYT, then $(\widetilde{M}, J, g)$ is holomorphically isometric to the product of a K\"ahler Ricci-flat manifold and a Bismut flat manifold.
\item[ii)] If $(\widetilde{M}, J, g)$ contains no K\"ahler de Rham factors, then the Bismut bisectional curvature of $(M, J, g)$ is non-negative (resp.\ non-positive) if and only if $\rho$ is non-negative (resp.\ non-positive).
\end{itemize}
\end{theorem}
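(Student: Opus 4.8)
The plan is to deduce both statements from the structure theorem (Theorem \ref{thm:MAIN-A}) together with the description of the Bismut curvature obtained in Section \ref{sect:classification}. Since $\nabla$, its curvature $R$ and the Ricci form $\rho$ are local objects, and passing to the Hermitian universal cover changes nothing infinitesimally, I would work directly on $(\widetilde M, J, g)$ and use Theorem \ref{thm:MAIN-A} to write it as a product of Hermitian irreducible factors, each of which is either K\"ahler or a standard building block $N = \mathbb{R}^{\ell} \times \prod_{i=1}^{s} S_i \times \mathsf{K}$. The torsion form $T = -\mathrm{d}^c\omega$ splits along this product (as in the proof of Theorem \ref{thm:standard-BKL}), hence so do $\nabla$, $R$ and $\rho$; in particular $\rho$ vanishes iff its restriction to each factor vanishes, and the holomorphic bisectional curvature is non-negative (resp.\ non-positive) iff it is so on each factor. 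On a building block $N$, Proposition \ref{prop:flatwhite} and Proposition \ref{prop:trick} show that $R$ is supported on the rank-one transverse distributions $\mathcal{R}_\alpha \subset H_{\ell+i}^\perp \cap T S_i$ of the Sasaki factors, and \eqref{eq:almost-flat} shows that on each such real $J$-invariant $2$-plane the whole curvature is encoded in the single function $c_\alpha \coloneqq R(u_\alpha, Ju_\alpha, u_\alpha, Ju_\alpha)$, $|u_\alpha| = 1$; equivalently $R_{u_\alpha, Ju_\alpha} = c_\alpha\, J$ on $\mathcal{R}_\alpha$ and $R_{u_\alpha, Ju_\alpha}$ kills $\mathcal{R}_\alpha^\perp$.

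For part (i), I would treat the two kinds of factors separately. On a K\"ahler factor $\mathrm{d}\omega = 0$ forces $T = 0$, so $\nabla = D$ and $\rho$ is the ordinary Ricci form; the CYT hypothesis then makes such a factor Ricci-flat. On a building block $N$, a short trace computation with a unitary frame adapted to the $\mathcal{R}_\alpha$'s gives $\rho(u_\alpha, Ju_\alpha) = -c_\alpha$ and $\rho = 0$ on every other coordinate plane, so CYT is equivalent to $c_\alpha \equiv 0$ for all $\alpha$. Since the Reeb field of each $S_i$ is $\nabla$-parallel, its Bismut curvature is controlled precisely by the corresponding $c_\alpha$, so $c_\alpha \equiv 0$ means each $S_i$ is Bismut flat; as $\mathbb{R}^\ell$ is flat and $\mathsf{K}$ is Bismut flat (Proposition \ref{prop:trick}), the whole block $N$ is then Bismut flat. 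Collecting the factors, $\widetilde M$ splits as a product of a K\"ahler Ricci-flat manifold and a Bismut flat one — a Samelson space, by \cite{MR4127891}.

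For part (ii) the hypothesis removes the K\"ahler factors, so $\widetilde M$ is a product of building blocks and all of its Bismut curvature is concentrated on the transverse planes $\mathcal{R}_\alpha$ of the Sasaki factors, governed by the functions $c_\alpha$. Writing an arbitrary tangent vector at $p$ as $X = \sum_\alpha X_\alpha + X_0$ with $X_\alpha \in \mathcal{R}_\alpha|_p$ and $X_0$ in the $\nabla$-flat directions, and using \eqref{eq:almost-flat}, \eqref{eq:symR} and \eqref{eq:Rcomplex} to kill all off-diagonal ($\alpha \neq \beta$) and flat-direction contributions, one computes the Bismut holomorphic bisectional curvature and the Ricci form as $R(X, JX, JY, Y) = -\sum_\alpha c_\alpha\,|X_\alpha|^2|Y_\alpha|^2$ and $\rho(X, JX) = -\sum_\alpha c_\alpha\,|X_\alpha|^2$ (up to the chosen sign conventions, the point being that the same $c_\alpha$ control both). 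Since each $\mathcal{R}_\alpha|_p$ is a genuine $2$-plane, testing on vectors supported in a single $\mathcal{R}_\alpha$ shows that the bisectional curvature is $\geq 0$ (resp.\ $\leq 0$) everywhere iff $c_\alpha \leq 0$ (resp.\ $\geq 0$) for all $\alpha$ at all points, which is exactly $\rho \geq 0$ (resp.\ $\rho \leq 0$); pulling this back to $M$ finishes the proof.

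The only delicate point is the bookkeeping in these last computations: one must verify that no off-diagonal curvature component and no term with an argument in a $\nabla$-flat direction contributes — which is precisely where \eqref{eq:almost-flat} and the parallelism of the splitting \eqref{eq:finaldec} are used — and then match the sign conventions in the definitions of $\rho$ and of the bisectional curvature so that the two equivalences in (ii) genuinely line up. Beyond this I do not anticipate any real difficulty: Theorem \ref{thm:MAIN-A} and the curvature analysis of Section \ref{sect:classification} already do the heavy lifting.
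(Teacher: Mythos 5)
Your proposal is correct and follows essentially the same route as the paper: decompose $\widetilde M$ via Theorem \ref{thm:MAIN-A}, observe that by \eqref{eq:almost-flat} the Bismut curvature of each non-K\"ahler factor reduces to one scalar function per Sasaki factor, and conclude that $\rho$ detects both the vanishing and the sign of the full curvature. The paper's proof is just a terser version of the same argument (including your observation that $\rho$ splits along the product and coincides with the Riemannian Ricci form on the K\"ahler factors).
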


\begin{proof}
From Theorem \ref{thm:MAIN-A} we know that $\widetilde{M}$ decomposes as a product of Hermitian irreducible factors, each of them being either K\"ahler or a Riemannian product
$$
\mathbb{R}^{\ell} \times \prod_{i=1}^{s} S_{i} \times \mathsf{K}
$$
of a Euclidean component $\mathbb{R}^{\ell}$, $s$ Sasaki $3$-dimensional manifolds $S_i$, and a compact semisimple Lie group $\mathsf{K}$, of rank $r$, with a bi-invariant metric, endowed with the standard complex structure as in Definition \ref{def:standard}, with $\ell \leq s+r$. Here, following Section \ref{sect:classification}, we are assuming that each Sasaki factor $S_i$ is not the homogeneous manifold $\mathsf{SU}(2)$. It is then sufficient to focus on each non-K\"ahler irreducible factor, since the Bismut Ricci form splits accordingly and coincides with the Riemannian Ricci form on the K\"ahler factors.

Fix then a non-K\"ahler irreducible factor. By Lemma \ref{lem:almost-flat} , its Bismut curvature tensor $R$ reduces to $s$ scalar functions and
$$
\rho(E_i,JE_i)= R(E_i,JE_i,JE_i,E_i) \,\, ,
$$
where $(E_i,JE_i)$ are local unitary frames for $H_{\ell +i}^{\perp} \cap TS_i$, $1 \leq i \leq s$. Therefore, $\rho = 0$ if and only if $R=0$, which corresponds to the case $s=0$. Moreover, if $s >0$, then the sign of the Bismut bisectional curvature is completely determined by the the sign of $\rho$, and this concludes the proof. 
\end{proof}

\begin{rem} \label{rem:Samelson}
During this proof, we observed the following well-known fact: a simply-connected, Bismut flat manifold is holomorphically isometric to an even dimensional Lie group $\mathbb{R}^{\ell} \times \mathsf{K}$, where $\mathsf{K}$ is compact and semisimple, endowed with a bi-invariant metric and a left-invariant complex structure. Following \cite{MR4127891}, we refer to them as {\it Samelson spaces}.
\end{rem}

\begin{rem}
From the above proof it is also clear that, for dimensional reasons, on a BKL surface, that is a Vaisman surface in virtue of \cite[Theorem 2]{MR4554474}, the Bismut bisectional curvature reduces to a single scalar function, that coincides with the {\it Bismut scalar curvature}.
\end{rem}

Finally, we prove a structure result for compact CYT and BKL manifolds. Before doing that, we provide a proof for the following lemma, which is well-known to the experts.

\begin{lemma} \label{lem:IsodeRham}
Let $(\widetilde{M},g)$ be a simply-connected, complete Riemannian manifold and let
\begin{equation} \label{eq:deRhamdec}
(\widetilde{M},g) \simeq \mathbb{R}^{\ell} \times (\widetilde{M}_1,g_1) \times {\dots} \times (\widetilde{M}_k,g_k)
\end{equation}
its de Rham decomposition. Then, the full isometry group ${\rm Iso}(\widetilde{M},g)$ splits as
\begin{equation} \label{eq:IsodeRham}
{\rm Iso}(\widetilde{M},g) \simeq {\rm Iso}(\mathbb{R}^{\ell}) \times \Big(\mathfrak{S} \ltimes \big({\rm Iso}(\widetilde{M}_1,g_1) \times {\dots} \times {\rm Iso}(\widetilde{M}_k,g_k)\big)\Big) \,\, ,
\end{equation}
where $\mathfrak{S}$ denotes the finite group that permutes the isometric factors of \eqref{eq:deRhamdec}.
\end{lemma}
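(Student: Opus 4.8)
The plan is to reduce the statement to the de Rham decomposition theorem together with its uniqueness. Recall that, since $(\widetilde{M},g)$ is simply connected and complete, \eqref{eq:deRhamdec} \emph{is} its de Rham decomposition: the tangent bundle carries the induced $g$-orthogonal parallel splitting $T\widetilde{M} = E_0 \oplus E_1 \oplus \dots \oplus E_k$, where $E_0$ is the maximal parallel distribution on which the holonomy acts trivially (integrating to the flat leaf $\mathbb{R}^{\ell}$) and each $E_i$, $i \geq 1$, is holonomy-irreducible and non-flat (integrating to $(\widetilde{M}_i,g_i)$); moreover this splitting of each tangent space into its flat part and its irreducible summands is unique.

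First I would show that every $\varphi \in {\rm Iso}(\widetilde{M},g)$ permutes the factors of \eqref{eq:deRhamdec}. Since $\varphi$ intertwines parallel transport with parallel transport along the image curve, the distributions $d\varphi(E_i)$ are again parallel, with $d\varphi(E_0)$ flat and $d\varphi(E_i)$ holonomy-irreducible and non-flat for $i \geq 1$; hence $T\widetilde{M} = \bigoplus_i d\varphi(E_i)$ is again a de Rham decomposition. By the uniqueness recalled above, $d\varphi(E_0) = E_0$ and there is a permutation $\sigma$ of $\{1,\dots,k\}$ with $d\varphi(E_i) = E_{\sigma(i)}$. As an isometry, $\varphi$ maps the (totally geodesic) integral leaves of $E_i$ onto those of $E_{\sigma(i)}$; arguing exactly as for a metric product (a point is the intersection of the leaves through it), one gets that $\varphi$ has the form
\[
\varphi(x_0, x_1, \dots, x_k) = \left(\psi_0(x_0),\ \psi_{\sigma^{-1}(1)}\big(x_{\sigma^{-1}(1)}\big),\ \dots,\ \psi_{\sigma^{-1}(k)}\big(x_{\sigma^{-1}(k)}\big)\right),
\]
with $\psi_0 \in {\rm Iso}(\mathbb{R}^{\ell})$ and each $\psi_i \colon \widetilde{M}_i \to \widetilde{M}_{\sigma(i)}$ an isometry; in particular $\widetilde{M}_i$ and $\widetilde{M}_{\sigma(i)}$ are isometric, so $\sigma \in \mathfrak{S}$. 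Conversely, any choice of such $\sigma$, $\psi_0$ and $\psi_i$ manifestly defines an isometry of $\widetilde{M}$.

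It then remains to recognize the right-hand side of \eqref{eq:IsodeRham} as the set of these data endowed with the induced composition law. Grouping the non-flat factors into isometry classes and fixing, inside each class, identifications of the mutually isometric factors, every $\psi_i$ can be regarded as an element of ${\rm Iso}(\widetilde{M}_i,g_i)$, and $\mathfrak{S}$ becomes the product of the symmetric groups on those classes, acting on $\prod_{i=1}^k {\rm Iso}(\widetilde{M}_i,g_i)$ by permuting the factors within each class; composing two isometries written as above reproduces precisely the multiplication of $\mathfrak{S} \ltimes \big(\prod_{i=1}^k {\rm Iso}(\widetilde{M}_i,g_i)\big)$. Because no non-flat irreducible Riemannian manifold is isometric to a Euclidean space, $\mathfrak{S}$ never involves the $\mathbb{R}^{\ell}$-block, and $\psi_0$ is unconstrained by, and commutes with, everything else; hence ${\rm Iso}(\mathbb{R}^{\ell})$ splits off as a direct factor and \eqref{eq:IsodeRham} follows. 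The resulting correspondence is a group homomorphism by the composition computation, surjective by the previous paragraph, and injective since a nontrivial element acts nontrivially on $\widetilde{M}$.

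The heavy lifting is done by de Rham's theorem, so the only genuine obstacle is bookkeeping: checking carefully that the permutations of \emph{isometric} de Rham factors together with the factorwise isometries organize into the semidirect (wreath-type) product $\mathfrak{S} \ltimes \prod_i {\rm Iso}(\widetilde{M}_i,g_i)$, and keeping in mind that the splitting of $\mathbb{R}^{\ell}$ into lines is \emph{not} canonical — which is exactly why the whole of ${\rm Iso}(\mathbb{R}^{\ell}) = \mathbb{R}^{\ell} \rtimes \mathsf{O}(\ell)$ occurs as an unconstrained direct factor rather than being cut down by $\mathfrak{S}$.
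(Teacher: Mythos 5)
Your proof is correct and follows essentially the same route as the paper's: both rest on the fact that an isometry conjugates holonomy groups and hence, by uniqueness of the de Rham splitting, fixes the flat distribution and permutes the irreducible ones, which forces the product form of the isometry and the semidirect-product structure. The only difference is one of packaging: you re-derive the permutation property and the surjectivity directly and spell out the wreath-type bookkeeping, whereas the paper outsources these steps to Kobayashi--Nomizu (Sect.\ VI.3, Lemma 2 and Theorem 3.5) together with a dimension count.
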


\begin{proof}
By \cite[Sect VI.3, Lemma 2]{MR1393940}, there is a injective group homomorphism
$$
{\rm Iso}(\mathbb{R}^{\ell}) \times {\rm Iso}(\widetilde{M}_1,g_1) \times {\dots} \times {\rm Iso}(\widetilde{M}_k,g_k) \to {\rm Iso}(\widetilde{M},g) \,\, .
$$
Moreover, by \cite[Sect VI.3, Theorem 3.5]{MR1393940}, we know that 
$$
\dim {\rm Iso}(\mathbb{R}^{\ell}) + \sum_{1 \leq i \leq k} \dim {\rm Iso}(\widetilde{M}_i,g_i) = \dim{\rm Iso}(\widetilde{M},g) \,\, .
$$
It remains to show that any isometry of $(\widetilde{M},g)$ must preserve or interchange the irreducible factors in \eqref{eq:deRhamdec}. Let $\varphi \in {\rm Iso}(\widetilde{M},g)$ be an isometry, $p \in \widetilde{M}$ a point and consider the holonomy decomposition
$$
T\widetilde{M} = V^0 + V^1 + {\dots} + V^{k}
$$
of the tangent bundle $T\widetilde{M}$. By \cite[Sect VI.3, Lemma 2]{MR1393940}, the differential ${\rm d}\varphi|_p$ preserves the holonomy decomposition up to order, {\it i.e.}, ${\rm d}\varphi|_p (V^0_{p}) = V^0_{\varphi(p)}$ and there exists a permutation $\sigma$ of $k$ elements such that ${\rm d}\varphi|_p (V^i_{p}) = V^{\sigma(i)}_{\varphi(p)}$ for any $1 \leq i \leq k$. Write $p \simeq (p_0,p_1,{\dots},p_k)$ and $\varphi(p)\simeq (q_0,q_1,{\dots},q_k)$ according to \eqref{eq:deRhamdec}. Then, since any factor in \eqref{eq:deRhamdec} is connected, complete and totally geodesic, and $\varphi$ is an isometry, it follows that
$$
\varphi\big(\mathbb{R}^{\ell} \times \{p_1\} \times {\dots} \times \{p_k\}\big) = \mathbb{R}^{\ell} \times \{q_1\} \times {\dots} \times \{q_k\}
$$
and
$$
\varphi\big(\{p_0\} \times \{p_1\} \times {\dots} \times \widetilde{M}_i \times {\dots} \times \{p_k\}\big) = \{q_0\} \times \{q_1\} \times {\dots} \times \widetilde{M}_{\sigma(i)} \times {\dots} \times \{q_k\}
$$
for any $1 \leq i \leq k$. Clearly $(\widetilde{M}_i,g_i)$ is isometric to $(\widetilde{M}_{\sigma(i)},g_{\sigma(i)})$ and, since $\varphi$ is smooth, the permutation $\sigma$ does not depend on the choice of the point $p$. This concludes the proof.
\end{proof}

Note that the analogous of Lemma \ref{lem:IsodeRham} also holds for compact, non-simply-connected manifold (see \cite[Corollary 1]{MR1473665}). By using Lemma \ref{lem:IsodeRham}, we get the following

\begin{theorem} \label{thm:BismutEinstein}
Let $(M,J,g)$ be a compact, CYT and BKL manifold. Then it splits, up to a finite cover, as a Hermitian product $M = \mathcal{Y} \times N$, where $\mathcal{Y}$ is a compact, simply-connected, K\"ahler Ricci flat manifold and $N$ is a compact, Bismut flat manifold.
\end{theorem}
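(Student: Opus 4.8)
The plan is to feed the statement into Theorem~\ref{thm:univcoverBKLCYT}(i), which already gives that the Hermitian universal cover splits holomorphically and isometrically as $\widetilde{M} = \widetilde{\mathcal{Y}} \times \widetilde{N}$, with $\widetilde{\mathcal{Y}}$ K\"ahler Ricci flat and simply-connected and $\widetilde{N}$ simply-connected Bismut flat, hence (Remark~\ref{rem:Samelson}) a Samelson space $\mathbb{R}^{\ell} \times \mathsf{K}$ with $\mathsf{K}$ compact and semisimple. Refining the K\"ahler factor through its de Rham decomposition, $\widetilde{\mathcal{Y}} = \mathbb{R}^{b} \times \mathcal{Y}'$ with $\mathcal{Y}'$ K\"ahler Ricci flat, simply-connected, and without Euclidean de Rham factor; since $\widetilde{\mathcal{Y}}$ is K\"ahler, this refinement is by complex submanifolds, so that $J$ preserves the splitting $\widetilde{M} = \mathcal{Y}' \times \big(\mathbb{R}^{N_0} \times \mathsf{K}\big)$, where $N_0 \coloneqq b+\ell$ and $\mathbb{R}^{N_0} \times \mathsf{K}$ is again a Samelson space. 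Writing $\mathsf{K} = \mathsf{K}_1 \times \dots \times \mathsf{K}_p$ for the decomposition into simple factors, this, together with the de Rham splitting of $\mathcal{Y}'$, exhibits the full de Rham decomposition of $\widetilde{M}$.

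Next I would locate $\Gamma \coloneqq \pi_1(M)$, acting on $\widetilde{M}$ by holomorphic isometries, freely, properly discontinuously and cocompactly, inside ${\rm Iso}(\widetilde{M})$. By the compact analogue of Lemma~\ref{lem:IsodeRham} (see \cite{MR1473665}), every isometry of $\widetilde{M}$ preserves the Euclidean factor $\mathbb{R}^{N_0}$ and permutes the irreducible non-flat de Rham factors; since each simple factor $\mathsf{K}_i$ is Einstein with positive Ricci curvature while each irreducible factor of $\mathcal{Y}'$ is Ricci flat, no isometry can interchange a $\mathsf{K}$-type factor with a $\mathcal{Y}'$-type one. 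Hence ${\rm Iso}(\widetilde{M}) \simeq {\rm Iso}(\mathbb{R}^{N_0}) \times {\rm Iso}(\mathcal{Y}') \times {\rm Iso}(\mathsf{K})$, and $\Gamma$ embeds in this product with $\Gamma$-equivariant projections $\rho_{\mathrm{eu}}$, $\rho_{\mathcal{Y}}$, $\rho_{\mathsf{K}}$ onto the three factors.

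The heart of the argument, and the step I expect to be the main obstacle, is to prove that $\mathcal{Y}'$ is compact. Since $\mathsf{K}$ is compact, projecting the compact quotient $\widetilde{M}/\Gamma$ onto $\mathcal{Y}'/\rho_{\mathcal{Y}}(\Gamma)$ shows that $\rho_{\mathcal{Y}}(\Gamma)$ acts cocompactly on $\mathcal{Y}'$. If $\mathcal{Y}'$ were non-compact, the classical argument --- choose a ray, use the cocompact action to translate its midpoints into a fixed compact set, and pass to a sublimit of the resulting centered minimizing geodesic segments --- would produce a line in $\mathcal{Y}'$; the Cheeger--Gromoll splitting theorem, applicable since $\mathcal{Y}'$ is complete with ${\rm Ric} \geq 0$, would then split off a Euclidean de Rham factor, a contradiction. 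Therefore $\mathcal{Y}'$ is compact. Being compact, K\"ahler, Ricci flat and without flat de Rham factor, it carries no nonzero Killing field by Bochner's theorem (on a compact Ricci flat manifold Killing fields are parallel, and a nonzero parallel field would split off an $\mathbb{R}$-factor); thus ${\rm Iso}(\mathcal{Y}')$ is finite. In particular ${\rm Iso}(\mathcal{Y}') \times {\rm Iso}(\mathsf{K})$ is compact, so the discreteness of $\Gamma$ forces $\rho_{\mathrm{eu}}(\Gamma)$ to be a discrete, cocompact subgroup of ${\rm Iso}(\mathbb{R}^{N_0})$ with finite kernel, while $\rho_{\mathcal{Y}}(\Gamma)$ is finite.

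Finally I would extract the finite cover. By Bieberbach's theorem $\rho_{\mathrm{eu}}(\Gamma)$ contains a finite-index subgroup acting on $\mathbb{R}^{N_0}$ as a translation lattice $\cong \mathbb{Z}^{N_0}$; its preimage in $\Gamma$, further intersected with the finite-index subgroup $\ker \rho_{\mathcal{Y}}$, gives a finite-index subgroup $\Gamma_1 \leq \Gamma$ acting trivially on $\mathcal{Y}'$, by translations on $\mathbb{R}^{N_0}$ and by holomorphic isometries on $\mathsf{K}$. Then $\Gamma_1$ acts freely, properly discontinuously and cocompactly on the Samelson space $\mathbb{R}^{N_0} \times \mathsf{K}$, and
\[
\widetilde{M}/\Gamma_1 \;=\; \mathcal{Y}' \times \big( (\mathbb{R}^{N_0} \times \mathsf{K})/\Gamma_1 \big)
\]
is a Hermitian product in which $\mathcal{Y}'$ is compact, simply-connected, K\"ahler Ricci flat and $N \coloneqq (\mathbb{R}^{N_0} \times \mathsf{K})/\Gamma_1$ is a compact Bismut flat manifold (in fact a local Samelson space in the sense of \cite{MR4127891}). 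As $\widetilde{M}/\Gamma_1 \to M$ is a finite cover, this proves the statement, the compactness of $\mathcal{Y}'$ being the only non-formal ingredient.
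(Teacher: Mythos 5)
Your proposal is correct and follows essentially the same route as the paper: Theorem~\ref{thm:univcoverBKLCYT} for the splitting of the universal cover, Cheeger--Gromoll for the compactness of the K\"ahler Ricci flat factor, Bochner for the finiteness of its isometry group, Lemma~\ref{lem:IsodeRham} for the splitting of ${\rm Iso}(\widetilde{M})$, and passage to the finite-index kernel of the projection of $\pi_1(M)$ onto ${\rm Iso}(\mathcal{Y})$. The only differences are presentational: you explicitly peel off the Euclidean de Rham factor of the K\"ahler part and run the Bieberbach argument to exhibit $N$ as a local Samelson space, a refinement the paper defers to the proof of Theorem~\ref{thm:MAIN-B} via \cite{MR4127891}.
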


\begin{proof}
By Theorem \ref{thm:univcoverBKLCYT} and Remark \ref{rem:Samelson}, the Hermitian universal cover $(\widetilde{M},J,g)$ of $(M,J,g)$ splits isometrically as
\begin{equation} \label{eq:BKLCYTdecuniv}
(\widetilde{M},J,g) = \mathcal{Y} \times \mathbb{R}^{\ell} \times \mathsf{K} \,\, ,
\end{equation}
where $\mathcal{Y}$ is a simply-connected, complete K\"ahler Ricci flat manifold and $\mathbb{R}^{\ell} \times \mathsf{K}$ is a Samelson space. Therefore, $(M,g)$ has non-negative Riemannian Ricci curvature. We can assume that $\mathcal{Y}$ does not have any Riemannian flat factors, as they can be gathered with the factor $\mathbb{R}^{\ell}$ appearing in \eqref{eq:BKLCYTdecuniv}. Consequently, by the Cheeger--Gromoll Theorem \cite[Theorem 3]{MR303460}, it follows that $\mathcal{Y}$ is compact. Since $\mathcal{Y}$ is compact, simply-connected and its Riemannian Ricci tensor vanishes, the Bochner formula implies that the group ${\rm Iso}(\mathcal{Y})$ is finite.

Observe that, by \cite[Sect IX.8, Theorem 8.1]{MR1393941}, each de Rham factor of $\mathcal{Y}$ inherits a K\"ahler structure. Therefore, no irreducible de Rham factor of $\mathcal{Y}$ can be isometric to an irreducible de Rham factor of $\mathsf{K}$. Consequently, by \eqref{eq:IsodeRham}, we get
\begin{equation} \label{eq:IsodeRham2}
{\rm Iso}(\widetilde{M},g) = {\rm Iso}(\mathcal{Y}) \times {\rm Iso}(\mathbb{R}^{\ell}) \times {\rm Iso}(\mathsf{K}) \,\, .
\end{equation}
Let now $\pi_1(M)$ be the fundamental group of $M$, that acts on $\widetilde{M}$ via deck transformations of the cover by holomorphic isometries. By \eqref{eq:IsodeRham2}, we get an injective group homomorphism
$$
\tau = \big(\tau_{\mathcal{Y}},\tau_{\mathbb{R}^{\ell}},\tau_{\mathsf{K}}\big) : \pi_1(M) \to {\rm Iso}(\mathcal{Y}) \times {\rm Iso}(\mathbb{R}^{\ell}) \times {\rm Iso}(\mathsf{K}) \,\, .
$$
Since ${\rm Iso}(\mathcal{Y})$ is finite, ${\rm ker}(\tau_{\mathcal{Y}})$ is a normal subgroup of $\pi_1(M)$ with finite index. Therefore, up to passing to a finite cover, we may replace $\pi_1(M)$ by ${\rm ker}(\tau_{\mathcal{Y}})$ (see \cite[Proposition 1.36]{MR1867354}). This implies that $M$ is finitely covered by the product $\mathcal{Y} \times N$, where $N$ is compact and Bismut flat.
\end{proof}

We are finally ready to gather all the results needed to obtain Theorem \ref{thm:MAIN-B}.

\begin{proof}[Proof of Theorem \ref{thm:MAIN-B}]
By Theorem \ref{thm:BismutEinstein}, it follows that any CYT and BKL manifold $(M,J,g)$ splits, up to a finite cover, as a product of a compact, simply-connected, K\"ahler Ricci flat manifold $\mathcal{Y}$ and compact, Bismut flat manifold $N$. Moreover, by \cite[Theorem 1]{MR4127891}, it follows that $N$ is finitely covered by a local Samelson space $M'$. Finally, it is straightforward to check that any product $\mathcal{Y} \times M'$ as above is CYT and BKL.
\end{proof}


\begin{thebibliography}{99}

\bibitem{MR2322400} \textsc{I.\ Agricola}, {\it The Srn\'{\i} lectures on non-integrable geometries with torsion}, Arch.\ Math.\ (Brno) {\bf 42} (2006), 5--84.

\bibitem{MR3319112} \textsc{I.\ Agricola, A.\ C.\ Ferreira, T.\ Friedrich}, {\it The classification of naturally reductive homogeneous spaces in dimensions $n \leq 6$}, Differential Geom.\ Appl.\ {\bf 39} (2015), 59--92.

\bibitem{MR2651537} \textsc{I.\ Agricola, T.\ Friedrich}, {\it A note on flat metric connections with antisymmetric torsion}, Differential Geom.\ Appl.\ {\bf 28} (2010), 480--487.

\bibitem{MR2047649} \textsc{I.\ Agricola, T.\ Friedrich}, {\it On the holonomy of connections with skew-symmetric torsion}, Math.\ Ann.\ {\bf 328} (2004), 711--748.

\bibitem{MR3362465} \textsc{M.\ M.\ Alexandrino, R.\ G.\ Bettiol}, Lie groups and geometric aspects of isometric actions, {\it Springer, Cham}, 2015.

\bibitem{MR2265474} \textsc{B.\ Alexandrov}, {\it ${\rm Sp}(n){\rm U}(1)$-connections with parallel totally skew-symmetric torsion}, J.\ Geom.\ Phys.\ {\bf 57} (2006), 323--337.

\bibitem{MR1836272} \textsc{B.\ Alexandrov, S.\ Ivanov}, {\it Vanishing theorems on Hermitian manifolds}, Differential Geom.\ Appl.\ {\bf 14} (2001), 251--265.

\bibitem{MR4733370} \textsc{A.\ Andrada, A.\ Tolcachier}, {\it Harmonic complex structures and special Hermitian metrics on products of Sasakian manifolds}, J.\ Geom.\ Anal.\ {\bf 34} (2024), paper no.\ 181.

\bibitem{MR4480223} \textsc{A.\ Andrada, R.\ Villacampa}, {\it Bismut connection on Vaisman manifolds}, Math.\ Z.\ {\bf 302} (2022), 1091--126.

\bibitem{MR4564028} \textsc{D.\ Angella, A.\ Otal, L.\ Ugarte, R.\ Villacampa}, {\it On Gauduchon connections with K\"ahler-like curvature}, Comm.\ Anal.\ Geom.\ {\bf 30} (2022), 961--1006.

\bibitem{angella-otiman} \textsc{D.\ Angella, A.\ Otiman}, {\it A note on compatibility of special Hermitian structures}, \texttt{arXiv:2306.02981}.

\bibitem{barbaro2023bismut} \textsc{G.\ Barbaro}, {\it Bismut Hermitian Einstein metrics and the stability of the pluriclosed flow}, \texttt{arXiv:2307.10207}.

\bibitem{MR4554058} \textsc{G.\ Barbaro}, {\it On the curvature of the Bismut connection: Bismut--Yamabe problem and Calabi--Yau with torsion metrics}, J.\ Geom.\ Anal.\ {\bf 33} (2023), paper no.\ 153.

\bibitem{MR4614394} \textsc{G.\ Barbaro, M.\ Lejmi}, {\it Second Chern--Einstein metrics on four-dimensional almost-Hermitian manifolds}, Complex Manifolds {\bf 10} (2023), paper no.\ 20220150.

\bibitem{MR1006380} \textsc{J.\ M.\ Bismut}, {\it A local index theorem for non-K\"ahler manifolds}, Math.\ Ann.\ {\bf 284} (1989), 681--699.

\bibitem{MR3099098} \textsc{J.\ M.\ Bismut}, Hypoelliptic Laplacian and Bott--Chern cohomology, {\it Birkh\"auser Springer, Cham}, 2013. 

\bibitem{MR4696610} \textsc{J.\ M.\ Bismut, S.\ Shen, Z.\ Wei}, Coherent sheaves, superconnections, and Riemann--Roch--Grothendieck, {\it Birkh\"auser Springer, Cham}, 2023.

\bibitem{MR103990} \textsc{W.\ M.\ Boothby}, {\it Hermitian manifolds with zero curvature}, Michigan Math.\ J.\ {\bf 5} (1958), 229--233.

\bibitem{MR2382957} \textsc{C.\ P.\ Boyer, K.\ Galicki}, Sasakian geometry, {\it Oxford University Press}, {\it Oxford}, 2008.

\bibitem{brienza2024cyt} \textsc{B.\ Brienza, A.\ Fino, G.\ Grantcharov}, {\it CYT and SKT manifolds with parallel Bismut torsion}, \texttt{arXiv:2401.07800}.
  
\bibitem{MR0781344} \textsc{T.\ Br\"{o}cker, T.\ tom Dieck}, Representations of compact Lie groups, {\it Springer-Verlag, New York}, 1985.

\bibitem{MR800347} \textsc{P.\ Candelas, G.\ T.\ Horowitz, A.\ Strominger, E.\ Witten}, {\it Vacuum configurations for superstrings}, Nuclear Phys.\ B {\bf 258} (1985), 46--74.

\bibitem{37} \textsc{\'E.\ Cartan, J.\ A.\ Schouten}, {\it On the geometry of the Group-manifold of simple and semi-simple groups}, Proc.\ Royal Acad.\ Amsterdam {\bf 29} (1926), 803--815.

\bibitem{36} \textsc{\'E.\ Cartan, J.\ A.\ Schouten}, {\it On Riemannian geometries admitting an absolute parallelism}, Proc.\ Royal Acad.\ Amsterdam {\bf 29} (1926), 933--946.

\bibitem{MR303460} \textsc{J.\ Cheeger, D.\ Gromoll}, {\it The splitting theorem for manifolds of nonnegative Ricci curvature}, J.\ Differential Geometry {\bf 6} (1971), 119--128.

\bibitem{MR2114426} \textsc{R.\ Cleyton, A.\ Swann}, {\it Einstein metrics via intrinsic or parallel torsion}, Math.\ Z.\ {\bf 247} (2004), 513--528.

\bibitem{MR2795448} \textsc{A.\ J.\ Di Scala, L.\ Vezzoni}, {\it Chern-flat and Ricci-flat invariant almost Hermitian structures}, Ann.\ Global Anal.\ Geom.\ {\bf 40} (2011), 21--45. 

\bibitem{MR1473665} \textsc{J.-H.\ Eschenburg, E.\ Heintze}, {\it Unique decomposition of Riemannian manifolds}, Proc.\ Amer.\ Math.\ Soc.\ {\bf 126} (1998), 3075--3078. 

\bibitem{MR2101226} \textsc{A.\ Fino, G.\ Grantcharov}, {\it Properties of manifolds with skew-symmetric torsion and special holonomy}, Adv.\ Math.\ {\bf 189} (2004), 439--450.

\bibitem{MR4592898} \textsc{A.\ Fino, G.\ Grantcharov}, {\it CYT and SKT metrics on compact semi-simple Lie groups}, SIGMA Symmetry Integrability Geom.\ Methods Appl.\ {\bf 19} (2023), paper no.\ 028.
  
\bibitem{MR4732911} \textsc{A.\ Fino, G.\ Grantcharov, E.\ Perez}, {\it The pluriclosed flow for $T^2$-invariant Vaisman metrics on the Kodaira--Thurston surface}, J.\ Geom.\ Phys.\ {\bf 201} (2024), paper no.\ 105197.

\bibitem{MR4257077} \textsc{A.\ Fino, N.\ Tardini}, {\it Some remarks on Hermitian manifolds satisfying K\"ahler-like conditions}, Math.\ Z.\ {\bf 298} (2021), 49--68. 

\bibitem{MR4517715} \textsc{A.\ Fino, N.\ Tardini, L.\ Vezzoni}, {\it Pluriclosed and Strominger K\"ahler-like metrics compatible with abelian complex structures}, Bull.\ Lond.\ Math.\ Soc.\ {\bf 54} (2022), 1862--1872.

\bibitem{MR2521810} \textsc{A.\ Fino, A.\ Tomassini}, {\it A survey on strong KT structures}, Bull.\ Math.\ Soc.\ Sci.\ Math.\ Roumanie {\bf 52} (2009), 99--116. 

\bibitem{MR1928632} \textsc{T.\ Friedrich, S.\ Ivanov}, {\it Parallel spinors and connections with skew-symmetric torsion in string theory}, Asian J.\ Math.\ {\bf 6} (2002), 303--335. 

\bibitem{MR2247437} \textsc{G.\ Ganchev, O.\ Kassabov}, {\it Hermitian manifolds with flat associated connection}, Kodai Math.\ J.\ {\bf 29} (2006), 281--298. 

\bibitem{MR3643933} \textsc{M.\ Garcia-Fernandez}, Lectures on the Strominger system, Travaux math\'ematiques Vol. XXIV, {\it Fac.\ Sci.\ Technol.\ Commun.\ Univ.\ Luxemb., Luxembourg}, 2016.

\bibitem{MR4284898} \textsc{M.\ Garcia-Fernandez, J.\ Streets}, Generalized Ricci flow, {\it Amer.\ Math.\ Soc.\, Providence, RI}, 2021.

\bibitem{GATES1984157} \textsc{S.\ J.\ Gates, C.\ M.\ Hull, M.\ Ro{\v c}ek}, {\it Twisted multiplets and new supersymmetric non-linear $\sigma$-models}, Nuclear Phys.\ B {\bf 248} (1984), 157--186. 

\bibitem{MR1456265} \textsc{P.\ Gauduchon}, {\it Hermitian connections and Dirac operators}, Boll.\ Un.\ Mat.\ Ital.\ B {\bf 11} (1997), 257--288. 

\bibitem{MR2406264} \textsc{D.\ Grantcharov, G.\ Grantcharov, Y.\ S.\ Poon}, {\it Calabi--Yau connections with torsion on toric bundles}, J.\ Differential Geom.\ {\bf 78} (2008), 13--32. 

\bibitem{MR2764884} \textsc{G.\ Grantcharov}, {\it Geometry of compact complex homogeneous spaces with vanishing first Chern class}, Adv.\ Math.\ {\bf 226} (2011), 3136--3159. 

\bibitem{MR436054} \textsc{A.\ Gray}, {\it Curvature identities for Hermitian and almost Hermitian manifolds}, Tohoku Math.\ J.\ {\bf 28} (1976), 601--612.

\bibitem{MR2015241} \textsc{J.\ Gutowski, S.\ Ivanov, G.\ Papadopoulos}, {\it Deformations of generalized calibrations and compact non-K\"ahler manifolds with vanishing first Chern class}, Asian J.\ Math.\ {\bf 7} (2003), 39--79.

\bibitem{MR1867354} \textsc{A.\ Hatcher}, Algebraic topology, {\it Cambridge University Press, Cambridge}, 2002.

\bibitem{MR4058531} \textsc{J.\ He, K.\ Liu, X.\ Yang}, {\it Levi-Civita Ricci-flat metrics on compact complex manifolds}, J.\ Geom.\ Anal.\ {\bf 30} (2020), 646--666.

\bibitem{MR1834454} \textsc{S.\ Helgason}, Differential geometry, Lie groups, and symmetric spaces, {\it American Mathematical Society, Providence, RI}, 2001.

\bibitem{MR862401} \textsc{C.\ M.\ Hull}, {\it Compactifications of the heterotic superstring}, Phys.\ Lett.\ B {\bf 178} (1986), 357--364.

\bibitem{MR872720} \textsc{C.\ M.\ Hull}, Superstring compactifications with torsion and spacetime supersymmetry, Superunification and extra dimensions (Torino, 1985), {\it World Sci.\ Publishing, Singapore}, 1986.

\bibitem{MR2067465} \textsc{S.\ Ivanov}, {\it Connections with torsion, parallel spinors and geometry of ${\rm Spin}(7)$ manifolds}, Math.\ Res.\ Lett.\ {\bf 11} (2004), 171--186. 

\bibitem{MR2038309} \textsc{S.\ Ivanov, I.\ Minchev}, {\it Quaternionic K\"ahler and hyperK\"ahler manifolds with torsion and twistor spaces}, J.\ Reine Angew.\ Math.\ {\bf 567} (2004), 215--233. 

\bibitem{MR1822270} \textsc{S.\ Ivanov, G.\ Papadopoulos}, {\it Vanishing theorems and string backgrounds}, Classical Quantum Gravity {\bf 18} (2001), 1089--1110. 

\bibitem{ivanov2023riemannian} \textsc{S.\ Ivanov, N.\ Stanchev}, {\it The Riemannian Bianchi identities of metric connections with skew torsion and generalized Ricci solitons}, \texttt{arXiv:2307.03986}.

\bibitem{MR0500955} \textsc{B.\ S.\ Kiranagi}, {\it Lie algebra bundles}, Bull.\ Sci.\ Math.\ {\bf 102} (1978), 57--62. 

\bibitem{MR1393940} \textsc{S. Kobayashi, K. Nomizu}, Foundations of Differential Geometry, Vol. 1, {\it Interscience Publisher}, {\it New York}, 1963.

\bibitem{MR1393941} \textsc{S. Kobayashi, K. Nomizu}, Foundations of Differential Geometry, Vol. 2, {\it Interscience Publisher}, {\it New York}, 1969.

\bibitem{lafuente2022hermitian} \textsc{R.\ A.\ Lafuente, J.\ Stanfield}, {\it Hermitian manifolds with flat Gauduchon connections}, Ann.\ Scuola Norm.\ Sup.\ Pisa Cl.\ Sci., DOI: \texttt{10.2422/2036-2145.202210\_005}.

\bibitem{MR3632564} \textsc{K.\ Liu, X.\ Yang}, {\it Ricci curvatures on Hermitian manifolds}, Trans.\ Amer.\ Math.\ Soc.\ {\bf 369} (2017), 5157--5196. 

\bibitem{MR0163246} \textsc{A.\ Morimoto}, {\it On normal almost contact structures with a regularity}, Tohoku Math.\ J.\ {\bf 16} (1964), 90--104.

\bibitem{MR3972000} \textsc{S.\ Picard}, Calabi--Yau manifolds with torsion and geometric flows, Complex non-K\"ahler geometry, {\it Springer, Cham}, 2019.

\bibitem{MR994129} \textsc{H.\ Pittie}, {\it The Dolbeault-cohomology ring of a compact, even-dimensional Lie group}, Proc.\ Indian Acad.\ Sci.\ Math.\ Sci.\ {\bf 98} (1988), 117--152. 

\bibitem{MR0059287} \textsc{H.\ Samelson}, {\it A class of compact-analytic manifolds}, Portugal.\ Math.\ {\bf 12} (1953), 129--132.

\bibitem{MR2893680} \textsc{J.\ Sparks}, {\it Sasaki-Einstein manifolds}, Surv.\ Differ.\ Geom.\ {\bf 16}, {\it International Press}, {\it Somerville, MA}, 2011.

\bibitem{MR4181011} \textsc{J.\ Streets}, Pluriclosed flow and the geometrization of complex surfaces, Geometric analysis -- in honor of Gang Tian's 60th birthday, {\it Birkh\"auser Springer, Cham}, 2020.

\bibitem{MR2673720} \textsc{J.\ Streets, G.\ Tian}, {\it A parabolic flow of pluriclosed metrics}, Int.\ Math.\ Res.\ Not.\ IMRN (2010), 3101--3133. 

\bibitem{MR3110582} \textsc{J.\ Streets, G.\ Tian}, {\it Regularity results for pluriclosed flow}, Geom.\ Topol.\ {\bf 17} (2013), 2389--2429. 

\bibitem{MR851702} \textsc{A.\ Strominger}, {\it Superstrings with torsion}, Nuclear Phys.\ B {\bf 274} (1986), 253--284. 

\bibitem{MR0630631} \textsc{K.\ Tsukada}, {\it Eigenvalues of the Laplacian on Calabi--Eckmann manifolds}, J.\ Math.\ Soc.\ Japan (1981), 673--691.

\bibitem{MR1958088} \textsc{M.\ Verbitsky}, {\it HyperK\"ahler manifolds with torsion, supersymmetry and Hodge theory}, Asian J.\ Math.\ {\bf 6} (2002), 679--712.

\bibitem{MR4002291} \textsc{L.\ Vezzoni, B.\ Yang, F.\ Zheng}, {\it Lie groups with flat Gauduchon connections}, Math.\ Z.\ {\bf 293} (2019), 597--608.

\bibitem{MR4127891} \textsc{Q.\ Wang, B.\ Yang, F.\ Zheng}, {\it On Bismut flat manifolds}, Trans.\ Amer.\ Math.\ Soc.\ {\bf 373} (2020), 5747--5772.

\bibitem{MR3843433} \textsc{B.\ Yang, F.\ Zheng}, {\it On compact Hermitian manifolds with flat Gauduchon connections}, Acta Math.\ Sin.\ (Engl.\ Ser.) {\bf 34} (2018), 1259--1268.

\bibitem{MR3900484} \textsc{B.\ Yang, F.\ Zheng}, {\it On curvature tensors of Hermitian manifolds}, Comm.\ Anal.\ Geom.\ {\bf 26} (2018), 1195--1222.

\bibitem{MR107275} \textsc{K.\ Yano}, {\it Affine connexions in an almost product space}, Kodai Math.\ Sem.\ Rep.\ {\bf 376} (1959), 1--24.

\bibitem{MR1216573} \textsc{S.\ T.\ Yau}, Open problems in Geometry, Differential geometry: partial differential equations on manifolds, {\it Amer.\ Math.\ Soc.\, Providence, RI}, 1993.

\bibitem{MR4577328} \textsc{S.\ T.\ Yau, Q.\ Zhao, F.\ Zheng}, {\it On Strominger K\"ahler-like manifolds with degenerate torsion}, Trans.\ Amer.\ Math.\ Soc.\ {\bf 376} (2023), 3063--3085.

\bibitem{MR4554474} \textsc{Q.\ Zhao, F.\ Zheng}, {\it Strominger connection and pluriclosed metrics}, J.\ Reine Angew.\ Math.\ {\bf 796} (2023), 245--267.

\bibitem{ZZ4-5} \textsc{Q.\ Zhao, F.\ Zheng}, {\it Bismut K\"ahler-like manifolds of dimension $4$ and $5$}, \texttt{arXiv:2303.09267}.
	
\end{thebibliography}
\end{document}